\newtheorem{theorem}{Theorem}[section]
\newtheorem{lemma}[theorem]{Lemma}
\theoremstyle{definition}
\newtheorem{definition}[theorem]{Definition}
\newtheorem{example}[theorem]{Example}
\newtheorem{proposition}[theorem]{Proposition}
\newtheorem{corollary}[theorem]{Corollary}
\newtheorem{fact}[theorem]{Fact}
\newtheorem{remark}[theorem]{Remark}
\newtheorem{remark and question}[theorem]{Remark and Question}
\newtheorem{Notation and Remark}[theorem]{Notation and Remark}
\newtheorem{Notation}[theorem]{Notation}
\newtheorem{question}[theorem]{Question}
\DeclareMathOperator{\alfa}{\beta}
\newcommand{\N}[2][k]{N_{#2}(<#1)}
\newcommand{\Ralfak}[1][R]{{#1}[\alfa_1,\ldots,\alfa_k]}
\newcommand{\Null}[1][R]{N_{#1}}
\newcommand{\Nulld}[1][R]{N'_{#1}}
\DeclareMathOperator{\im}{Im}
\newcommand{\Stabk}[1][R]{St_{k}(#1)}
\newcommand{\PrPol}[1][R]{\mathcal{P}(#1)}
\newcommand{\PolFun}[1][R]{\mathcal{F}(#1)}
\DeclareMathOperator{\quv}{\xspace{ }\triangleq\xspace{ }}
\begin{document}
	\openup .3em
	
	\title[]{Polynomial functions on a class of finite non-commutative  rings}
	
	\author{Amr Ali Abdulkader Al-Maktry}
	\address{\hspace{-12pt}Department of Analysis and Number Theory (5010) \\
		Technische Universit\"at Graz \\
		Kopernikusgasse 24/II \\
		8010 Graz, Austria}
	\curraddr{}
	\email{almaktry@math.tugraz.at}
	\author{Susan F.  El-Deken}
	\address{\hspace{-12pt}Department of Mathematics \\
		Faculty of Science \\
		 Capital University
		Ain Helwan, 11790 \\
		Cairo, Egypt}
	
	\email{sfdeken@science.helwan.edu.eg}
	\curraddr{}
	\email{}

	\subjclass[2010]{Primary 13F20 16P10; 
		Secondary  05A05, 06B10,   20B35, 11C99}
	
	\keywords{ Finite non-commutative rings, dual  numbers,
		polynomials, polynomial functions,  
		polynomial permutations, permutation polynomials, null polynomials, finite polynomial permutations groups}
	
	\date{}
	
	\dedicatory{}
	\maketitle

	\section*{Abstract}
Let $R$ be a finite non-commutative ring with $1\ne 0$. By a polynomial function on  $R$, we mean a  function $F\colon R\longrightarrow R$ induced by a polynomial $f=\sum\limits_{i=0}^{n}a_ix^i\in R[x]$ via right substitution of the variable $x$, i.e., 
$F(b)= \sum\limits_{i=0}^{n}a_ib^i$ for every $b\in R$. In this paper, we  study the   polynomial functions of the free $R$-algebra $R[\beta_1,\ldots,\beta_k]$, for $k\ge 1$,
with a   basis $\{1,\beta_1,\ldots,\beta_k\}$ consisting of central elements  satisfying  $\beta_i\beta_j=0$ for every $1\le i,j\le k$. 
Our investigation revolves around assigning a polynomial $\lambda_f(y,z)$ over $R$ in non-commuting variables  $y$ and $z$ to each polynomial $f$ in $R[x]$; and describing the polynomial functions on $R[\beta_1,\ldots,\beta_k]$ through the polynomial functions induced on $R$  by polynomials in $R[x]$ and by their assigned polynomials in the non-commuting variables  $y$ and $z$. 
By extending results from the commutative case to the non-commutative scenario, we demonstrate that several properties and theorems in the commutative case can be generalized to the non-commutative setting with appropriate adjustments.

			\section{Introduction}\label{CHSIntro} 
		
		Let $R$ denote a finite   ring with identity $1\ne 0$. By a polynomial function on  $R$, we mean a right-polynomial function, which is   a function $F\colon R\longrightarrow R$ induced by a polynomial $f=\sum\limits_{i=0}^{n}a_ix^i\in R[x]$ via right substitution of the variable $x$, i.e., $F(b)= \sum\limits_{i=0}^{n}a_ib^i$ for every $b\in R$. In particular,   if $F$ is a bijection,  we call   $F$   a \emph{polynomial permutation} and $f$   a \emph{permutation polynomial} 
		on $R$.   We denote the set of polynomial functions on $R$ by $\PolFun[R]$, and  by $\PrPol[R]$ we denote the set of all polynomial permutations on $R$. When $R$ is a finite commutative ring,  the set $\PolFun[R]$ is known to be a monoid  under the composition of functions. Further, the group of units of $\PolFun[R]$ is just the set  $\PrPol[R]$. While the case that $R$ is a non-commutative ring  is  difficult and still open,  we can, however,  ensure that the set $\PolFun[R]$ and the set $\PrPol[R]$ generate a monoid and a group, respectively, of functions on $R$.
        
		Even though polynomial functions on finite fields have been investigated before the 20th  century, polynomial functions on finite rings that   are not   fields were first studied at the beginning of the third decade of the 20th century by    
		Kempner~\cite{Residue}. 
		Kempner intensely studied the polynomial functions of the ring $\mathbb{Z}_m$ of integers modulo $m$.  Perhaps the only defect of his work was a lack of simplicity, but his work nevertheless had a significant impact on other researchers for decades.  For instance,   some researchers followed up on his results, derived equivalent relations, and added new ideas to the topic as well \cite{per1,pol1,pol2,pol3}.
		Others generalized his results to Galois rings ~\cite{gal}, finite commutative principal rings~\cite{Necha}, and the so-called suitable rings~\cite{suit}. In most of the previously mentioned works, the authors tackled the problems of counting  the number of elements in the monoid $\PolFun[R]$  and
		in the group   $\PrPol[R]$, and finding canonical representations of the polynomial functions.
		
		Recently, authors have provided algorithms determining whether or not a given function is a polynomial function
		and if so,  it obtains its polynomial representative on the ring of integers modulo $m$ \cite{guhaalgo} and on any finite commutative rings~\cite{gabalgo}.
		
		To  investigate polynomial functions efficiently on finite rings,  one needs some essential knowledge of those polynomials that induce the zero function on the ring $R$. We refer to these polynomials as null polynomials on $R$.
		These polynomials are of great interest in this investigation. Because, for example, one can represent all polynomial functions
		by polynomials of degrees not exceeding $n-1$ whenever a monic null polynomial of degree $n$ is given.  Another reason is  the fact that null polynomials form an ideal of the polynomial ring over  a quite large class of finite rings, including commutative rings, rings of matrices over commutative rings, and local rings
		(see Section~\ref{sc2}). Because of this,  some papers mainly considered  null polynomials  (see for example \cite{Gilmnull,Banda,RogChain}). 
		
		Beyond the significant impact of Kempner's work in the theory, the work of N\"obauer on polynomial permutations and permutation polynomials cannot be underestimated. We cannot here mention his enormous results in the theory, and rather we give a limited brief description of his work and refer the interested reader to his book with Lausch~\cite{Nobook}. For instance,  he obtained a condition determining whether a given polynomial induces a permutation of the ring $\mathbb{Z}_{p^n}$ of integers modulo a prime power   (\cite[Hilfssatz 8]{perconmod}), and later he generalized this condition to any finite commutative  rings~\cite{percondgen}. Also, he expressed the group $\PrPol[\mathbb{Z}_{p^n}]$ as the wreath product
		of   two of its subgroups~\cite{per1}. This structural result was first generalized to finite commutative local rings
		of nilpotency index $2$ by Frisch~\cite{suit} and finally into any finite local ring by~G\"orcs\"os, Horv\'ath and M\'esz\'aros~\cite{Gabor}. Further, some other papers considered the structures of subgroups of the group $\PrPol$ for the case $R=\mathbb{Z}_{p^n}$~\cite{per2}  or  for any finite commutative ring  
		\cite{hakiunit}.  
		
		Despite tackling various aspects related to polynomial functions on finite rings,
		all rings examined in the previously mentioned references are commutative.  Unlike the commutative case, polynomial functions on finite non-commutative rings only arose in special circumstances to answer specified questions.
		Indeed, most of the theory considered only scalar polynomial functions on the ring of $n\times n$ matrices over a finite field, $M_n(\mathbb{F}_q)$, (see, for example, \cite{Brawscal,Brawelyinvert}),  or over a finite commutative local ring~\cite{matringpolybr} for particular considerations.
		For instance,  they considered scalar   polynomial functions
		that permute  the entire ring of matrices~\cite{matringpolybr}; or that permute  a subset  of $M_n(\mathbb{F}_q)$ \cite{polsym}.
		Additionally,   in \cite{Brawscal}  and \cite {Brawcount}, the number of all scalar polynomial functions and the  number of all scalar polynomial permutations, respectively, on the ring $M_n(\mathbb{F}_q)$ have been counted. Meanwhile, in  \cite{Wernerki,frischpolytri}, the authors focused on null polynomials and their connection with the ring of integer-valued polynomials of certain non-commutative algebras. 
		
		To the best of our knowledge, the only references containing counting formulas for the number of polynomial functions 
		on finite non-commutative rings are \cite{chainpch} and \cite{BrawleyRight}. In \cite{chainpch}, the author not only found the number of polynomial functions but also the number of polynomial permutations of any finite non-commutative chain rings (Galois-Eisenstein-Ore ring)  of characteristic $p$. While in \cite{BrawleyRight}, the author counted the polynomial functions of the ring of matrices over finite fields. 
				
		In this paper, we delve into the realm of polynomial functions within non-commutative rings.  We aim  to explore the polynomial functions on the non-commutative free $R$-algebra $R_k=R[\alfa_1,\ldots,\alfa_k]$, where $\alfa_i$ lies in the center $C(R_k)$ of $R _k$ and  $\alfa_i\alfa_j=0$ for $i,j=1,\ldots,k$. Since this ring can be expressed as the quotient ring
		$R[x_1,\ldots,x_k]/T$, where $T$ is the ideal generated by the set $\{x_ix_j\mid i,j=1,\ldots,k\}$, we call it the ring of dual numbers in $k$ variables. When $R$ is commutative, exploring $\PolFun[R_k]$ has been done in two stages. Initially, in \cite{Haki}, only the case $k=1$  was examined with more consideration on the ring of integers modulo $p^n$. Then in \cite{polysev},   $\PolFun[R_k]$ was examined for any positive integer $k$ and any finite commutative ring $R$. In general, these examinations were carried out not only by examining the polynomial functions obtained by polynomials in $R[x]$ but also by examining the polynomial functions obtained by their formal derivatives. 
		
		We will show that most of the results of   \cite{polysev} are still true, and losing an important property like commutativity does not affect the results. Furthermore,  given a polynomial $f\in R[x]$, we assign a new polynomial $\lambda_{ f}(y,z) $ in   non-commuting variables $y$ and $z$ 
		(see Definition~\ref{assignpol} for details); and  similar to the commutative case, investigating   $\PolFun[R_k]$
		will depend not on the  polynomials from $R[x]$ but also on their assigned polynomials in the non-commuting variables $y$ and $z$. In other words,  conditions derived on $f$ and its formal derivative $f'$ in \cite{polysev}  are  revisited here by  general conditions on $f$ and $\lambda_{ f}(y,z) $
		(compare, for example, Theorem~\ref{CHS4} with \cite[Theorem~3.4]{polysev},  and Theorem~\ref{Cherper} with  \cite[Theorem~4.1]{polysev}). Furthermore, we also consider the case where $R$ is a chain ring with  $Char(R)\ne p $  in some detail.
		
		The influences of polynomial functions can be seen in different topics,  for example dynamical systems  \cite{Anash},
		computer science (see for example \cite{Com1,Com2}) and in discrete mathematics~\cite{discapp}. Further,  beyond these applications, polynomial functions appear in different
		contexts of mathematics, for instance, permutation polynomials occur as  $R$-automorphisms of the polynomial
		ring (see for example \cite{Hajja,Hajja2}) and as isomorphisms of combinatorial objects~\cite{Cobj1}. Also,  some groups of polynomial permutations occur as subgroups of the permutation groups of cyclic codes~\cite{kensa}. Moreover, null polynomials are efficiently utilized in investigating several rings of integer-valued polynomials (see for example~\cite{NWer,Javad}).
		In our opinion, therefore, exploring polynomial functions on new classes of non-commutative rings opens new gates for 	applications.
		
		It is worth mentioning that   some mathematicians have explored    polynomial functions on other algebraic structures such as semigroups~\cite{semi},  monoids~\cite{Tichyfinm}, groups~\cite{polgr}, algebras~\cite{PolAlg}  and lattices~\cite{Eiglat}.

		Here is a brief description of the paper. In Section~\ref{sc2}, we review basic definitions and facts about polynomial functions on finite non-commutative rings.   Section~\ref{sec3}  contains the essential properties of the ring of dual numbers in $k$ variables, $R_k$, and their polynomials. In Section~\ref{CHSsc3}, we describe null polynomials on $R_k$. 
		In Section~\ref{Sec04}, we characterize permutation polynomials on $R_k$. Then in its subsection, 
		we examine permutation polynomials on $R_k$ for the case $R$ is a chain ring.		
		Finally,  we devote Section~\ref{6}  to investigate some  groups generated by permutation polynomials on $R_k$    with more consideration on those generated by permutations  represented by polynomials over $R$.

		\section{Basic facts on polynomial functions }\label{sc2}
		In this section, we expose some basic  definitions and facts  about polynomial functions on finite  non-commutative rings. 
		
		Throughout this paper,  by $g'$, we denote the formal derivative of the polynomial g;  and let $k$ denote a positive integer and let $\mathbb{F}_q$ denote the finite field of $q$ elements.
		
		\begin{definition}\label{CHSequvfun}
			Let $R$ be a non-commutative ring, and  $g=\sum\limits_{i=0}^{n}a_ix^i\in R[x]$. Then:
			\begin{enumerate}
				\item The polynomial $g$ induces two functions $F_1,F_2\colon R\longrightarrow R$ by right
				substitution $F_1(b) =g_r(b)=\sum\limits_{i=0}^{n}a_ib^i $  and left substitution $F_2(b) =g_l(b)=\sum\limits_{i=0}^{n}b^ia_i $ for the variable $x$. We call $F_1$ a right-polynomial function
				on $R$	and  $F_2$ a left-polynomial function on $R$.  
				
				\item By $[g_r]_R$,	we denote the right-polynomial function induced by $g$ on $R$, and  
				by $[g_l]_R$,	we denote the left-polynomial function induced by $g$ on $R$. When the ring is understood, we write $[g_r]$ and  $[g_l]$ respectively.
				\item If $f\in R[x]$ such that $f$ and $g$ induce the same right-function (left-function)  on $R$, i.e., $[g_r] =[f_r] $ ($[g_l] =[f_l] $), then we abbreviate this with $g \quv_r  f$ ($g \quv_l  f$) on $R$.

				\item We define $\PolFun[R]_r=\{[g_r] \mid g\in R[x]\}$, and   
				$$\PrPol[R]_r=\{[f_r] \mid [f_r] \text{ is a permutation of }R \text{ and }   f\in R[x]\}. $$
				Similarly, we define  $\PolFun[R]_l$ and $\PrPol[R]_l$. 	
				\item If $A$ is a subring of $R$, and $f\in A[x]$, then $f$ induces two right-polynomial functions (left-polynomial functions) on $R$ and   $A$ respectively. To distinguish between them we write $[f_r]_R$ and $[f_r]_A$ ($[f_l]_R$ and $[f_l]_A$).
			\end{enumerate}
		\end{definition}

		It is clear that $\PolFun[R]_r$ ($\PolFun[R]_l$) is an additive group with respect to pointwise addition.
		However, unlike    commutative rings, we cannot define pointwise multiplication  on the sets  $\PolFun[R]_r$
		and $\PolFun[R]_l$ since substitution in general is not a homomorphism. Indeed,  one may find $f,g\in R[x]$ and $r_1,r_2\in R$ such that
		$$f_r(r_1)g_r(r_1)\ne h_r(r_1) \text{ and } f_l(r_2)g_l(r_2)\ne h_l(r_2),$$ where
		$h=fg$.  In fact, if we write $f(x)=\sum\limits_j a_jx^j $ and $g(x)=\sum\limits_i x^ib_i $, then we can express	$h $ as $h(x)=\sum\limits_j a_jg(x)x^j$ and  $h(x)=\sum\limits_i  x^if(x) b_i$. Thus, substitutions  in $h$  from right and left  are, respectively, given   by
		\begin{equation}\label{prodsub}
			h_r(c)=\sum\limits_j a_jg_r(c)c^j,
		\end{equation}	
		and
		\begin{equation}
			h_l(c)=\sum\limits_i  c^if_l(c) b_i. 
		\end{equation}

		From now on, we only consider right-polynomial functions. By symmetry, all the properties of right-polynomial functions can be drawn into left-polynomial functions with some suitable changes. We will not use
		the term right and will remove the subscript $r$ indicating it from the notation. In particular, for a polynomial 
	$f=\sum\limits_{i=0}^{n}a_ix^i\in R[x]$, we use $f(b)$ to designate $\sum\limits_{i=0}^{n}a_ib^i $ for every $b\in R$.
		
		We mention here that if   $f,g\in R[x]$, where $g$ is   in the center of $R[x]$, then $(fg)(r)=f(r)g(r)$ for every $r\in R$. Also, if $f$ is in the center of $R[x]$, then $(fg)(r)=g(r)f(r)$ for every $r\in R$. This can be concluded easily by
		following definition in which  we summarize the relation between the multiplication of two polynomials and
		its value on $R$. 
		\begin{definition}\label{noncprod}
			Let $f,g\in R[x]$. Let $f(x)=\sum\limits_{j=0}^{n}a_jx^j$.
			Then 
			\begin{enumerate}
				\item $(fg)(x)=\sum\limits_{j=0}^{n}a_jg(x)x^j$;
				\item  $(fg)(r)=\sum\limits_{j=0}^{n}a_jg(r)r^j$ for every $r\in R$.
			\end{enumerate}
		\end{definition}
		
		\begin{remark}\label{proddefrem}
			The previously defined multiplication of polynomials is the usual multiplication of polynomials which is the same as in the commutative case except that the coefficients do not commute, and therefore
			it is associative. But let us show this by using Definition~\ref{noncprod}. So, if $f,g,h \in R[x]$, then write  $f(x)=\sum\limits_{j=0}^{n}a_jx^j$ and $g(x)=\sum\limits_{i=0}^{m}b_ix^i$.
			Now, $$(fg)(x)=\sum\limits_{j=0}^{n}a_jg(x)x^j=\sum\limits_{j=0}^{n}a_j\sum\limits_{i=0}^{m}b_ix^ix^j=\sum\limits_{j=0}^{n}\sum\limits_{i=0}^{m}a_jb_ix^{i+j}.$$
			Therefore
			$$((fg)h)(x)=\sum\limits_{j=0}^{n}\sum\limits_{i=0}^{m}a_jb_ih(x)x^{i+j}=\sum\limits_{j=0}^{n}a_j(\sum\limits_{i=0}^{m}b_ih(x)x^i)x^j=\sum\limits_{j=0}^{n}a_j(gh)(x)x^j=(f(gh))(x).$$ This also shows that  $((fg)h)(r)=(f(gh))(r)$ for every $r\in R$.
		\end{remark} 
		
		Polynomials that map every element to zero (i.e., induce the zero function) appear widely within investigating polynomial functions, and for
		this we devote the following definition.
		\begin{definition}\label{nulpol}
			\leavevmode
			\begin{enumerate}
				\item A polynomial $f \in R[x]$ is called a null polynomial on $R$ if $f(r)=0$  for every $r\in R$.  In this case, we write $f\quv 0$ on $R$. 	
				\item 	We define  the set $\Null $ as:
				  					  \[\Null=\{ f\in R[x]\mid  f \quv 0 \text{ on } R\}.\]
				 			\end{enumerate}
		\end{definition}
		
		From Definitions~\ref{nulpol} and \ref{noncprod}, one sees without effort the following result.
		\begin{corollary}\label{leftull}
			Let $R$ be a non-commutative ring and let $\Null$ be the set of all null polynomials on $R$. Then
			\begin{enumerate}
				\item $\Null$ is a left ideal of $R[x]$;
				\item $\Null$ is an  ideal of $R[x]$ if and only if $\Null$ is an $R$-right module.
			\end{enumerate}
		\end{corollary}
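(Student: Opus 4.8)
The plan is to read everything off the evaluation formula in Definition~\ref{noncprod}, namely $(fg)(r)=\sum_j a_j\,g(r)\,r^j$ when $f=\sum_j a_j x^j$, together with the additivity of right substitution.

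For part (1), I would first note that $\Null$ is an additive subgroup of $R[x]$: the zero polynomial is null, and since right substitution is additive (it realizes the additive group $\PolFun[R]_r$), we have $(f-g)(r)=f(r)-g(r)=0$ whenever $f,g\in\Null$. Closure under left multiplication is then immediate from the formula: for an arbitrary $f=\sum_j a_j x^j\in R[x]$ and $g\in\Null$ we get $(fg)(r)=\sum_j a_j\,g(r)\,r^j=\sum_j a_j\cdot 0\cdot r^j=0$ for every $r\in R$, so $fg\in\Null$. Hence $\Null$ is a left ideal.

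For part (2), the forward implication is trivial: a two-sided ideal is in particular closed under right multiplication by the constant polynomials $c\in R$, which is exactly the assertion that $\Null$ is an $R$-right module. The substance lies in the converse, and the key point I would isolate first is that $\Null$ is \emph{automatically} closed under right multiplication by powers of $x$. Indeed, for $g=\sum_j b_j x^j\in\Null$ the same evaluation formula gives
\[
(g\,x^i)(r)=\sum_j b_j\, r^i r^j=\sum_j b_j\, r^j r^i=\Bigl(\sum_j b_j r^j\Bigr) r^i=g(r)\,r^i=0,
\]
using that powers of a fixed element commute so that the extra factor $r^i$ can be pushed to the outside of $g(r)$. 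Consequently only right multiplication by constants can fail, and this is precisely what the $R$-right module hypothesis supplies. To finish, I would write an arbitrary $h=\sum_i c_i x^i\in R[x]$ and use associativity (Remark~\ref{proddefrem}) to expand $gh=\sum_i (g c_i)\,x^i$; each $g c_i\in\Null$ because $\Null$ is an $R$-right module, each $(g c_i)x^i\in\Null$ by the power-of-$x$ closure just established, and the sum remains in $\Null$ since $\Null$ is an additive group. Thus $gh\in\Null$, so $\Null$ is also a right ideal, hence two-sided.

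The main (and essentially only) obstacle is this converse direction, and it dissolves once one observes that non-commutativity obstructs closure only through the constant coefficients: right multiplication by $x$ is harmless precisely because evaluation relocates the factor $r^i$ to the right of $g(r)$. Everything else is bookkeeping with the multiplication formula, so I expect no genuine difficulty beyond keeping track of the order of factors in $\sum_j b_j\, r^i r^j$.
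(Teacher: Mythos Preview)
Your argument is correct and is exactly the kind of verification the paper has in mind: the authors do not write out a proof at all, merely stating that the corollary ``follows without effort'' from Definitions~\ref{nulpol} and~\ref{noncprod}, and your proof supplies precisely those details using the evaluation formula $(fg)(r)=\sum_j a_j g(r) r^j$. The key observation you isolate---that right multiplication by $x^i$ is automatically harmless because powers of a fixed element commute, so only closure under right scalars is at issue---is the whole content of part~(2), and you handle it cleanly.
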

		\begin{remark}\label{eqrelation}
			For a ring $A$, the left ideal $\Null[A]$ defines an equivalence relation `` $\equiv$ $\mod \Null[A]$'' as the following, for any $f,g\in A[x]$, $f\equiv g \mod \Null[A]$ if and only if  $f-g\in \Null[A]$ which is equivalent to $ f\quv g$ on $A$. Thus, 
			the relation ``$\quv$ on $A$'' is  the same equivalence relation on $A[x]$ defined by $\Null[A]$. Also, there is a one-to-one correspondence between the equivalence classes of $\quv$ and the polynomial
			functions on $A$. Therefore, whenever $A$ is   finite, the following equality holds
			\[|\PolFun[A]|=[A[x]: \Null[A]]\]
			
		\end{remark}
		Throughout whenever $A$ is a ring let $J(A)$ denote its Jacobson radical.
		
		\begin{remark}\label{unitsum}
			Unlike the commutative case, we still do not generally know whether $\Null$ is an ideal (i.e., two-sided ideal).
			However, Werner showed that $\Null$ is an ideal over finite non-commutative rings in which every element can be written
			as a sum of units (for example semisimple rings and local rings)  \cite[Theorem~3.7]{Wernerki}. A result of Stewart~\cite[Lemma~4.5 and Theorem 4.6]{stewarsum} (see also \cite[Proposition~3.6]{Wernerki}) infers that every element  of a finite ring $A$
			is a sum of units if and only if $\mathbb{F}_2\times \mathbb{F}_2$ is not a homomorphic image of the ring $A/J(A) $.  
			
			Furthermore, Frisch showed that when  $R$ is the ring of upper (lower) triangular matrices 
			over a commutative ring $A$,  $\Null $  is an ideal \cite[Theorem 5.2]{frischpolytri}. In particular,
			if $R$ is the non-commutative ring  $UT_n(\mathbb{F}_2)$ of all $n\times n $ upper triangular matrices with entries in $\mathbb{F}_2$ and $n\ge 2$, then $\Null$ is an ideal by Frisch's result. However, not every element of $UT_n(\mathbb{F}_2)$ is a sum of units. For example, the elementary matrix $E_{11}$ (i.e., the matrix with entry $1$ in the first position and zero elsewhere) is not a sum of units, because any sum of units in the ring $UT_n(\mathbb{F}_2)$ gives a matrix whose diagonal elements are all equal to zero or all equal to one depending on the number of units in that sum.  This shows the previously mentioned condition of Werner is only sufficient but not necessary.	
		\end{remark}
		Although we remarked earlier that substitution is not a ring homomorphism from $R[x]$ onto the set of polynomial functions $\PolFun$, as we cannot endow the set $\PolFun$ with pointwise multiplication, we can define multiplication on it utilizing Definition~\ref{noncprod} whenever $\Null$ is an ideal.
		
		\begin{proposition}\label{prowedef}
			Let $R$ be a finite non-commutative ring. Define an operation ``$\cdot$'' on  $\PolFun$ by
			letting $F\cdot F_1=[fg]$, where $f,g\in R[x]$ such that $F=[f]$ and $[g]=F_1$. Then
			$``\cdot"$ is well defined if and only if $\Null$ is a two sided ideal; in this case $\PolFun$ is a ring endowed with multiplication $``\cdot"$ and pointwise addition. 
		\end{proposition}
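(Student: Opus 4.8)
The plan is to reduce everything to the quotient ring $R[x]/\Null$. First I would record, following Remark~\ref{eqrelation}, that the cosets of $\Null$ in $R[x]$ are precisely the classes of the equivalence relation $\quv$ on $R$, since $f\quv g$ holds iff $f-g\in\Null$. Consequently the map
\[
\Phi\colon R[x]/\Null \longrightarrow \PolFun, \qquad f+\Null \longmapsto [f],
\]
is a well-defined bijection; and it is in fact an isomorphism of additive groups, because $\Null$ is an additive subgroup (Corollary~\ref{leftull}) and pointwise addition of functions satisfies $[f]+[g]=[f+g]$, which is immediate from $(f+g)(a)=f(a)+g(a)$. Thus $\PolFun$ already carries exactly the additive structure of $R[x]/\Null$, and the entire question is whether the polynomial product descends through $\Phi$.

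For the well-definedness claim I would analyze directly what it means for $F\cdot F_1=[fg]$ to be independent of the chosen representatives. Suppose $[f]=[f']$ and $[g]=[g']$, that is $f-f'\in\Null$ and $g-g'\in\Null$. The crucial identity is
\[
fg-f'g'=(f-f')g+f'(g-g').
\]
Since $\Null$ is always a left ideal (Corollary~\ref{leftull}), the summand $f'(g-g')$ lies in $\Null$ automatically; hence $[fg]=[f'g']$ for all such choices if and only if $(f-f')g\in\Null$ whenever $f-f'\in\Null$. As $f-f'$ runs through all of $\Null$, this is exactly the assertion that $\Null$ is closed under right multiplication by $R[x]$, i.e. that $\Null$ is a right ideal; together with the left-ideal property this is precisely the condition that $\Null$ be two-sided. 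For the forward implication I would make the reduction explicit by choosing $f=h\in\Null$, $f'=0$, and $g'=g$, so that well-definedness forces $hg=fg-f'g'\in\Null$ for every $h\in\Null$ and every $g\in R[x]$.

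Finally, assuming $\Null$ is a two-sided ideal, I would transport the standard ring structure of $R[x]/\Null$ along $\Phi$. The quotient multiplication $(f+\Null)(g+\Null)=fg+\Null$ corresponds under $\Phi$ exactly to $[f]\cdot[g]=[fg]$, so associativity of $\cdot$ follows from the associativity of polynomial multiplication (Remark~\ref{proddefrem}), while the two distributive laws for $\cdot$ follow from the identities $f(g+h)=fg+fh$ and $(f+g)h=fh+gh$ in $R[x]$ combined with $[f]+[g]=[f+g]$; the constant polynomial $1$ yields the multiplicative identity $[1]$. Hence $\PolFun$ equipped with $\cdot$ and pointwise addition is a ring, indeed isomorphic to $R[x]/\Null$ via $\Phi$.

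I do not expect a serious obstacle here: the only genuine point is recognizing that the always-present left-ideal property disposes of the term $f'(g-g')$, so that well-definedness hinges solely on the right-ideal condition. Everything else is a routine transfer of structure through the additive isomorphism $\Phi$.
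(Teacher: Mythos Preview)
Your argument is correct. Both you and the paper reduce the question to whether $\Null$ is a right ideal, given that it is already a left ideal, but the executions differ slightly. The paper's $(\Leftarrow)$ direction works pointwise with the evaluation formula of Definition~\ref{noncprod}: writing $f=\sum a_jx^j$, $f_1=\sum b_jx^j$ and using $g(r)=g_1(r)$, it computes $(fg)(r)-(f_1g_1)(r)=\sum(a_j-b_j)g(r)r^j=(hg)(r)=0$ directly. Your decomposition $fg-f'g'=(f-f')g+f'(g-g')$ is the purely algebraic version of the same step and makes the role of the left-ideal property more transparent. For $(\Rightarrow)$ the paper only checks that $\Null$ is an $R$-right module (taking $r\in R$ constant) and then invokes Corollary~\ref{leftull}(2), whereas you test against an arbitrary $g\in R[x]$ and obtain the full right-ideal condition at once. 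Your final transport of the ring structure through the additive isomorphism $\Phi\colon R[x]/\Null\to\PolFun$ is a clean packaging of what the paper verifies by hand (associativity via Remark~\ref{proddefrem}); it has the bonus of exhibiting $\PolFun\cong R[x]/\Null$ explicitly.
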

		
		\begin{proof}
			
			First, we notice by Definition~\ref{noncprod} that, $[fg]\in \PolFun$ for every $f,g\in R[x]$.
			
            ($\Leftarrow$) Assume that $\Null$ is an ideal of $R[x]$. Let $F,F_1\in\PolFun$ and let $f,g\in R[x]$ such that $F=[f]$ and $F_1=[g]$. To show that $F\cdot F_1$ is well defined, we need to show that $[fg]=[f_1g_1]$   for every $f_1,g_1\in R[x]$ such that $F=[f_1 ]$ and $F_1=[g_1]$. Now, if $f_1,g_1\in R[x]$ such that $[f]=[f_1]$ and $[g]=[g_1]$, then $f(r) =f_1(r)$ and $g(r) =g_1(r)$ for every $r\in R$. More precisely, the polynomial $h=f-f_1$ is null on $R$, whence $hg\in \Null$ since $\Null$ is an ideal. Now, writing $f=\sum\limits_{j=0}^{n}a_jx^j$ and $ f_1=\sum\limits_{j=0}^{n}b_jx^j$  yields  in view of Definition~~\ref{noncprod}, for every $r\in R$
			\[(fg)(r)-(f_1g_1)(r)=\sum\limits_{j=0}^{n}a_jg(r)r^j-\sum\limits_{j=0}^{n}b_jg_1(r)r^j=\sum\limits_{j=0}^{n}(a_j-b_j)g(r)r^j=(hg)(r)=0.\] Thus, $(fg)(r)=(f_1g_1)(r)$ for each $r$. But this means $[fg]=[f_1g_1]$. 
			
			($\Rightarrow$) Assume that 	$``\cdot"$ is well defined. In the light of Corollary~\ref{leftull}, we need only show that $\Null$ is an $R$-right module. So, let $f\in \Null$ and let $r\in R$. Then $f\quv 0$  (the zero polynomial) on $R$  or equivalently, $[f]=0$ (the zero function). Thus, if $F$ is the constant polynomial function $r$, then $[fr]=0\cdot F=[0r]=0$ since $``\cdot"$ is well defined. Hence,  $fr\quv 0$ and $fr\in \Null$. 
			For the other part, one can easily see that $\PolFun$ is an additive group with respect to pointwise addition.
			We only check that $``\cdot"$ is associative and leave other properties to the reader.  Let $F_1,F_2,F_3\in \PolFun$, to show that $(F_1 \cdot F_2 )\cdot F_3= F_1 \cdot (F_2 \cdot F_3)$ it is enough to  show that $(fg)h=f(gh)$, where $f,g$ and $h$ any polynomials induce $F_1,F_2$ and $F_3$ respectively. But this is what we have already shown that in Remark~\ref{proddefrem}.
		\end{proof}
		
		\begin{remark}

				Unfortunately,  composition of polynomial functions is not compatible with composition of polynomials on non-commutative rings, that is, one can find two polynomials $f,g\in R[x]$  such that $[f]\circ [g]\ne [f\circ g]$, that is, there exists an $r\in R$ such that $(f\circ g)(r)\ne f(g(r))$. This is because composition depends on multiplication and we already remarked that substitution is not a homomorphism within multiplication of polynomials. For instance,   take $R$ to be the ring of $2\times2$ matrices over the field $\mathbb{F}_2$, and let $a=\begin{bmatrix}
					1 & 0 \\
					0 & 0 
				\end{bmatrix}$ and
				$b=\begin{bmatrix}
					0 & 1 \\
					0 & 1 
				\end{bmatrix}$. 
				For  $f(x)=x^2$ and $g(x)=ax$, we have the polynomial $h(x)=f\circ g(x)=a^2x^2=ax^2$. Then 
				\[h(b)=ab^2=\begin{bmatrix}
					0 & 1 \\
					0 & 0 
				\end{bmatrix}\ne 0= f(g(b)).\] Therefore, $[f]\circ [g]\ne [f\circ g]$.
				\end{remark}
		Because of this incompatibility of the composition of  polynomial functions and the composition of polynomials, we can  neither ensure  that $\PolFun$ is a monoid  nor  that $\PrPol$ is a group. 
		However, we will see that  $\PolFun$ generates a monoid of functions on $R$ and that  $\PrPol$ generates a group of permutations on $R$. 
		For this we need the following definition.
		
		\begin{definition}\label{closdef}
			Let $A$ be a non-empty set with associative operation $*$. For every   non-empty subset $B$ of $A$, we define its closure $\overline{B}$ in $A$ by
			\[\overline{B}=\{a_1*a_2*\cdots *a_n\mid n \in \mathbb{N} \text{ and } a_i\in B \text{ for } i=1,\ldots,n\}.\]  
		\end{definition}
	Given Definition~\ref{closdef}, one easily shows the following useful fact. 
		\begin{fact}\label{closis}
			Let $A$ be a non-empty set with associative operation $*$ and let  $B$ be a non-empty subset  of $A$.
			\begin{enumerate}
				\item If $(A,``*" )$ is a monoid and $1_A\in B$, then $\overline{B}$ is a submonoid of  $A$.
				\item If $(A,``*" )$ is a finite group, then $\overline{B}$ is a subgroup of  $A$.
			\end{enumerate} 
		\end{fact}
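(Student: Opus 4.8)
The plan is to derive both statements from the single observation that $\overline{B}$ is closed under the operation $*$. Given any two elements $x=a_1*\cdots*a_m$ and $y=b_1*\cdots*b_n$ of $\overline{B}$ with all $a_i,b_j\in B$, associativity lets me regard $x*y=a_1*\cdots*a_m*b_1*\cdots*b_n$ as an unambiguous product of $m+n$ elements of $B$, so $x*y\in\overline{B}$ by Definition~\ref{closdef}. Thus $\overline{B}$ is closed under $*$ in every case, and this is the only structural input both parts share.

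For part (1), I note that the identity is already present: since $1_A\in B$, taking $n=1$ and $a_1=1_A$ in Definition~\ref{closdef} shows $1_A\in\overline{B}$. Combined with the closure just established, this makes $\overline{B}$ a submonoid of $A$, and nothing further is needed.

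For part (2), I cannot invoke part (1), because no hypothesis places $1_A$ in $B$; instead I exploit finiteness. Since $\overline{B}$ is nonempty and closed under $*$, it suffices to show that it contains $1_A$ and is closed under inverses. Fix $x\in\overline{B}$. All powers $x,x^2,x^3,\ldots$ lie in $\overline{B}$ by closure, and since $A$ is finite there are indices $i<j$ with $x^i=x^j$; cancelling in the group yields $x^{m}=1_A$ where $m=j-i\ge 1$. Hence $1_A=x^{m}\in\overline{B}$, and the inverse $x^{-1}=x^{m-1}$ is either $1_A$ (when $m=1$) or a positive power of $x$ (when $m\ge 2$), and in both cases lies in $\overline{B}$. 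Therefore $\overline{B}$ is a subgroup of $A$.

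The argument is entirely routine, so I expect no genuine obstacle; the only point demanding care is that in part (2) the identity and the inverses must be \emph{produced} from the finiteness of $A$ together with the cancellation property of a group, rather than assumed, precisely because $B$ is not required to contain $1_A$.
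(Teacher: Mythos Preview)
Your proof is correct. The paper itself does not supply a proof of this fact; it simply states that it follows easily from Definition~\ref{closdef}, and your argument is precisely the standard verification the authors have in mind.
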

		
		Now, considering  $\PolFun$ as a subset of the monoid of all functions on $R$ and $\PrPol$ as a subset of the symmetric group $S_R$ of all permutations on $R$ together with
		Fact~\ref{closis} yields the following.
		\begin{corollary}\label{clospolyf}
			Let $R$ be a finite non-commutative ring  and $``\circ "$  denote the composition of functions.  Then
			\begin{enumerate}
				\item $(\overline{\PolFun},``\circ" )$ is a finite monoid;
				\item $(\overline{\PrPol},``\circ" )$ is a finite group. 
			\end{enumerate}
		\end{corollary}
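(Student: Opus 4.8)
The plan is to exploit Fact~\ref{closis} by placing $\PolFun$ and $\PrPol$ inside suitable ambient structures and verifying the hypotheses of that fact in each case. Since $R$ is finite, I would let $M$ denote the set of all functions $R\longrightarrow R$. Under composition $``\circ"$ the set $M$ is a monoid, because composition of functions is associative and the identity map $\mathrm{id}_R$ is a two-sided identity; likewise the symmetric group $S_R$ of all permutations of $R$ is a group under $``\circ"$. As $R$ is finite, both $M$ and $S_R$ are finite (of cardinalities $|R|^{|R|}$ and $|R|!$ respectively). Clearly $\PolFun\subseteq M$ and $\PrPol\subseteq S_R$, and since $``\circ"$ is associative Definition~\ref{closdef} applies to each, so that $\overline{\PolFun}$ and $\overline{\PrPol}$ are defined.

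For part (1), the only thing to check is that the identity of $M$ lies in $\PolFun$. This holds because the polynomial $x\in R[x]$ induces the identity function via right substitution, whence $\mathrm{id}_R=[x]\in\PolFun$ and in particular $1_M\in\PolFun$. Fact~\ref{closis}(1) then gives at once that $\overline{\PolFun}$ is a submonoid of $M$, and being a subset of the finite set $M$ it is finite, which proves the first assertion.

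For part (2), I would simply observe that $S_R$ is a finite group and that $\PrPol$ is a non-empty subset of it (again the identity permutation $[x]$ belongs to $\PrPol$). Fact~\ref{closis}(2) then yields directly that $\overline{\PrPol}$ is a subgroup of $S_R$, hence a finite group. Here I do not even need to verify in advance that $\PrPol$ is closed under inverses, since the relevant part of Fact~\ref{closis} already incorporates the standard fact that a non-empty subset of a finite group which is closed under the operation is automatically a subgroup.

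I expect essentially no technical obstacle; the content of the argument is conceptual rather than computational. The delicate point, highlighted by the preceding remark, is that composition of polynomial functions need not itself be induced by a polynomial, so $\overline{\PolFun}$ may strictly contain $\PolFun$ and its elements need not be polynomial functions at all. The whole purpose of passing to the closure inside the ambient function monoid and symmetric group is precisely to sidestep this incompatibility: the proof never uses polynomial structure, relying only on associativity of $``\circ"$ and the finiteness of $R$, the latter being exactly what guarantees both the finiteness of the closure and the subgroup conclusion in the group case.
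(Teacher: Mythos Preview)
Your proof is correct and follows essentially the same approach as the paper: the paper simply notes that one views $\PolFun$ as a subset of the monoid of all functions on $R$ and $\PrPol$ as a subset of the symmetric group $S_R$, and then applies Fact~\ref{closis}. Your additional verification that $\mathrm{id}_R=[x]\in\PolFun$ (needed for the hypothesis $1_A\in B$ in Fact~\ref{closis}(1)) and your remark on non-emptiness of $\PrPol$ fill in details the paper leaves implicit, but the strategy is identical.
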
 
		From now on, we call the group (monoid)  $\overline{B}$ the closure group (monoid) of  $B$.
		We conclude this section by indicating that the material in this section is known (see  \cite{frischnon-com,Wernerki}) except Proposition~\ref{prowedef} which we do not know any reference containing it. 
		\section{Dual numbers over finite non-commutative rings}\label{sec3}
		In this section, we expose some elementary properties of the ring of dual numbers in $k$ variables and its polynomial ring. Most of these properties play an essential role in the proofs of the following results in this paper.
		We start by giving a constructive definition of this ring. 
		\begin{definition}\label{CHS001}
			Let $R$ be  a non-commutative ring and let $T$ be the ideal of the polynomial ring $R[x_1,\ldots,x_k]$  generated by the set $\{x_ix_j\mid  i,j\in\{1,\ldots,k\}\}$. We call the quotient   
			ring $R[x_1,\ldots,x_k]/T$ the ring of dual numbers of $k$ variables over $R$.
			We write 
			$\Ralfak$ for  $R[x_1,\ldots,x_k]/T$, where $\beta_i$ denotes $x_i+T$.
		\end{definition}
		\begin{remark}
			Note that every  element of  $R[x_1,\ldots,x_k]/T$ has a unique representation as an $R$-linear combination of $1,\alfa_1,\ldots,\alfa_k$. That is, $\Ralfak$ is a free $R$-algebra with basis $\{1,\alfa_1,\ldots,\alfa_k\}$. We call the coefficient of $1$ the ``constant coefficient''. It follows then that   the polynomial ring ${\Ralfak}[x]$ is a free $R[x]$-algebra with the same basis $\{1,\alfa_1,\ldots,\alfa_k\}$. Also, $R$ is canonically embedded as a subring in 
			$\Ralfak$ by $r\rightarrow r \cdot 1$, and we have
			\[\Ralfak=\{r_0+\sum\limits_{i=1}^{k}r_i\alfa_i\mid  r_0,r_i\in R, \text{ with }\alfa_i \alfa_j=0 \text { for } 1\le i,j\le k \}.\] 
						It follows from this that  every polynomial $f\in {\Ralfak}[x]$ has a unique representation  $f =f_0 +\sum\limits_{i=1}^{k}f_i \alfa_i$, where $f_0,f_1,\ldots,f_k\in R[x]$.
		\end{remark}
		\begin{definition}\label{purpol}
			Let $f\in {\Ralfak}[x]$ and let $f_0,f_1,\ldots,f_k\in R[x]$ be the unique polynomials such that
			$f=f_0+\sum\limits_{i=1}^{k}f_i\alfa_i$. Then we call the polynomial $f_0$ the pure part of $f$.
		\end{definition}
		The following proposition summarizes some properties of $\Ralfak$ whose proof is immediate from Definition~\ref{CHS001}.
		\begin{proposition}\label{dualpro}
			Let $R$ be a non-commutative ring. Then the following statements hold: 
			
			\begin{enumerate}
				\item
				For $a_0,\ldots,a_k,b_0,\ldots,b_k\in R$, we have:
				
				\begin{enumerate}
					\item 
					$(a_0+\sum\limits_{i=1}^{k}a_i\alfa_i)(b_0+\sum\limits_{i=1}^{k}b_i\alfa_i)=a_0b_0+\sum\limits_{i=1}^{k}(a_0b_i+a_ib_0)\alfa_i$;
					\item
					$a_0+\sum\limits_{i=1}^{k}a_i\alfa_i$ is a unit in $\Ralfak$ if and only if 
					$a_0$ is a unit in $R$. In this case,\\ 
					$(a_0+\sum\limits_{i=1}^{k}a_i\alfa_i)^{-1}=a_0^{-1}-\sum\limits_{i=1}^{k}a_0^{-1}a_ia_0^{-1}\alfa_i$.
				\end{enumerate}
				\item
				$\Ralfak$ is a local ring if and only if $R$ is a local ring.
				\item 
				If $R$ is a  ring with  Jacobson radical   $J(R)$ of nilpotency $n$, then  $$J(\Ralfak)=J(R)+\sum\limits_{i=1}^{k}\alfa_i R \text{ of  nilpotency }n+1.$$
				\item If  $R$ is a  ring with center $C(R)$, then
				\begin{enumerate}
					\item	 $C(\Ralfak)=C(R)+\sum\limits_{i=1}^{k}C(R)\alfa_i$;
					\item  $C(R)[x]\subseteq C(\Ralfak)[x]$.
					\end{enumerate}
			\end{enumerate}
		\end{proposition}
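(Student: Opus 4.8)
The plan is to treat the four parts in order, using (1a) as the computational engine for everything that follows and the projection onto the constant coefficient as the structural tool. For (1a) I would simply expand the product $(a_0+\sum_i a_i\alfa_i)(b_0+\sum_j b_j\alfa_j)$ in $\Ralfak$, push every $\alfa_i$ to the right using that the basis is central, and kill every term $a_i\alfa_i b_j\alfa_j=a_ib_j\alfa_i\alfa_j$ via $\alfa_i\alfa_j=0$; what survives is exactly $a_0b_0+\sum_i(a_0b_i+a_ib_0)\alfa_i$. From this formula the constant-coefficient map $\pi\colon\Ralfak\to R$, $r_0+\sum_i r_i\alfa_i\mapsto r_0$, is visibly a surjective ring homomorphism with kernel $I:=\sum_{i=1}^k\alfa_i R$, which is two-sided because the $\alfa_i$ are central. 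For (1b), the direction ($\Leftarrow$) is a one-line verification that the proposed inverse multiplies to $1$ on both sides using (1a); the direction ($\Rightarrow$) is immediate, since $\pi$ is a ring homomorphism, so the image $a_0$ of a unit is a unit.

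Part (2) then follows formally: by (1b) the non-units of $\Ralfak$ are precisely $\pi^{-1}(\text{non-units of }R)$. If $R$ is local its non-units form the ideal $J(R)$, so the non-units of $\Ralfak$ form the ideal $\pi^{-1}(J(R))$ and $\Ralfak$ is local; conversely $R\cong\Ralfak/I$ is a quotient of a local ring by the proper ideal $I$, hence local.

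The real work is in (3). First I would observe $I^2=0$, since every product of two elements of $I$ carries a factor $\alfa_i\alfa_j=0$, so $I$ is a nilpotent ideal and therefore $I\subseteq J(\Ralfak)$. As the kernel $I$ of $\pi$ lies in the radical, $J(\Ralfak)=\pi^{-1}(J(R))=J(R)+I=J(R)+\sum_i\alfa_i R$, which is the claimed description. For the nilpotency index, set $N=J(\Ralfak)=J(R)+I$ and record $J(R)I\subseteq I$, $IJ(R)\subseteq I$, $I^2=0$. Expanding $N^m$ and discarding every term with two or more factors from $I$, one finds $N^m\subseteq J(R)^m+\sum_i\alfa_i J(R)^{m-1}$; the bookkeeping that each surviving $\alfa$-term has coefficient in $J(R)^{m-1}$ is the one place needing care, and it uses that $J(R)^a$ is two-sided so that $J(R)^a R J(R)^b\subseteq J(R)^{a+b}$. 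Taking $m=n+1$ gives $J(R)^{n+1}=0$ and $\alfa_iJ(R)^n=0$, hence $N^{n+1}=0$; taking $m=n$ and $0\ne j\in J(R)^{n-1}$, the element $\alfa_1 j=\alfa_1\cdot j\in N\cdot N^{n-1}=N^n$ is nonzero by uniqueness of the representation, so $N^n\ne 0$ and the index is exactly $n+1$.

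Finally (4a) is a direct commutation computation: for $z=a_0+\sum_i a_i\alfa_i$ and $r\in R$, (1a) and centrality of the $\alfa_i$ give $zr-rz=(a_0r-ra_0)+\sum_i(a_ir-ra_i)\alfa_i$, which vanishes for all $r$ iff every $a_i$ (including $a_0$) lies in $C(R)$; since $\Ralfak$ is generated by $R$ together with the central $\alfa_i$, commuting with all of $R$ is equivalent to being central, giving $C(\Ralfak)=C(R)+\sum_i C(R)\alfa_i$. Part (4b) is then immediate, as (4a) shows $C(R)\subseteq C(\Ralfak)$, so any polynomial with coefficients in $C(R)$ has coefficients in $C(\Ralfak)$, i.e. $C(R)[x]\subseteq C(\Ralfak)[x]$. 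The only genuine obstacle is the nilpotency-index bookkeeping in (3); the rest is direct computation from (1a) and the homomorphism $\pi$.
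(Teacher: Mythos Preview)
Your argument is correct in every part; the expansion in (1a), the projection $\pi$ for (1b) and (2), the radical-lifting and nilpotency bookkeeping in (3), and the commutation check in (4) are all sound. The paper itself does not give a proof of this proposition at all, declaring it ``immediate from Definition~\ref{CHS001}''; your write-up is precisely the unpacking of that one-line dismissal, carried out along the natural route, and in particular your treatment of the nilpotency index in (3) supplies the only step that actually requires a short computation beyond the definition.
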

		Unlike the commutative case we cannot obtain the same relation as in \cite[Lemma~2.6]{polysev} for the substitution in polynomials. The reason is that the binomial theorem does not hold in non-commutative rings and we need a more general form.  To overwhelm this vital obstacle we first introduce the following definition.
		\begin{definition}\label{assignpol}
			Let $f=\sum\limits_{j=0}^{n}a_jx^j\in R[x]$. Then we assign to $f$ a unique polynomial $\lambda_f(y,z)$ in the non-commuting variables $y$ and $z$ defined by 
			\[\lambda_f(y,z)=\sum\limits_{j=1}^{n}a_jm_j(y,z), \text{ where } m_j(y,z)=\sum\limits_{r=1}^{j}y^{r-1}zy^{j-r}.\]
			We call $\lambda_f$ the assigned polynomial to  $f$ with respect to two non-commuting variables, or just the assigned polynomial. 
		\end{definition}
		\begin{definition}\label{onevarsub}
			Let $f=\sum\limits_{j=0}^{n}a_jx^j\in R[x]$ and   $\lambda_f(y,z)$ be its assigned polynomial in the non-commuting variables $y$ and $z$. For every $a\in R$,  we define the polynomial $\lambda_f(a,z)$ by
			\[\lambda_f(a,z)=\sum\limits_{j=1}^{n}a_jm_j(a,z)=\sum\limits_{j=1}^{n}\sum\limits_{r=1}^{j}a_ja^{r-1}za^{j-r}.\]
		\end{definition}
		\begin{definition}\label{lmadfunction}
			Let $f=\sum\limits_{j=0}^{n}a_jx^j\in R[x]$ and   $\lambda_f(y,z)$ be its assigned polynomial in the non-commuting variables $y$ and $z$. Then
			\begin{enumerate}
				\item the polynomial $\lambda_f$ induces (defines) a function  $F \colon R\times R\longrightarrow R$ by
				substituting the variables 
				\[F(a,b)=\lambda_f(a,b)=\sum\limits_{j=1}^{n}a_jm_j(a,b)= \sum\limits_{j=1}^{n}\sum\limits_{r=1}^{j}a_ja^{r-1}ba^{j-r},\] 
				which we denote by $[\lambda_f(y,z)]$;
				\item for every $a\in R$ the polynomial $\lambda_f(a,z) $ defines a function $F_a \colon   R\longrightarrow R$ by substituting the variable $F_a(b)=\lambda_f(a,b)$, which we denote by
				$[\lambda_f(a,z)]$.
			\end{enumerate}
		\end{definition}
		\begin{remark}\label{notelamda}
			Let $f=\sum\limits_{j=0}^{n}a_jx^j\in R[x]$. Then
			
			\begin{enumerate}
				\item the polynomial $m_j(y,z)=\sum\limits_{r=1}^{j}y^{r-1}zy^{j-r}$ appearing  in Definition~\ref{assignpol} is independent
				of the polynomial $f$.
			
				\item In a similar manner, for every $a\in R$ 
                we can define the polynomial $\lambda_f(y,a) $ by
				\[\lambda_f(y,a)=\sum\limits_{j=1}^{n}a_jm_j(y,a)=\sum\limits_{j=1}^{n}\sum\limits_{r=1}^{j}a_jy^{r-1}ay^{j-r}.\]
				However, in our text, this type of polynomials is less important
				than the polynomial $\lambda_f(a,z) $ of Definition~\ref{onevarsub}.
				\item For every $a,b\in R$ such that $ab=ba$, one easily sees that\label{lamdtoderiv}
				\[\lambda_f(a,b)=\sum\limits_{j=1}^{n}a_jm_j(a,b)= \sum\limits_{j=1}^{n}ja_ja^{j-1}b=f'(a)b.\]  
			\end{enumerate}
		\end{remark}
		Here are straightforward properties of the assigned polynomial which are useful through the coming proofs.
		
		\begin{fact}\label{lamdapro}
			Let $r ,s,w \in R$. Let $f $ and $g \in R[x]$, and let $\lambda_{f}$ and $\lambda_{g}$ be their assigned polynomials respectively. Then
			\begin{enumerate}
				\item $\lambda_{rf+sg}=\lambda_{ rf}+\lambda_{ sg}$; 
				\item $\lambda_{fr+gs}=\lambda_{ fr}+\lambda_{ gs}$;
				\item $\lambda_{ f}(y,z)=0$ if and only if $f$ is constant;
				\item $\lambda_{ f}(y,z)=z$ if and only if $f=x$;
				\item $\lambda_{ f}(0,z)= a_1z$ and $\lambda_{ f}(y,1)=f'(y)$, where $f(x)=\sum\limits_{j=0}^{n}a_jx^j$;
				\item $\lambda_{ f}(y,0)=0$; 
				\item $\lambda_{ f}(r,s\pm w)=\lambda_{ f}(r,s)\pm\lambda_{ f}(r,w)$.
			\end{enumerate}
		\end{fact}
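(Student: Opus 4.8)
My plan is to regard each assigned polynomial $\lambda_f(y,z)$ as an element of the free associative $R$-algebra in the noncommuting variables $y,z$, and to reduce all seven assertions to three elementary structural facts about the building blocks $m_j(y,z)=\sum_{r=1}^{j}y^{r-1}zy^{j-r}$ of Definition~\ref{assignpol}, which by Remark~\ref{notelamda} do not depend on $f$: \textbf{(i)} the assignment $f\mapsto\lambda_f$ merely reads off and reuses the $x^j$-coefficients of $f$, so it is additive; \textbf{(ii)} the variable $z$ occurs exactly once in every monomial of $m_j$, i.e. $m_j$ is homogeneous of degree one in $z$; and \textbf{(iii)} the words $y^{r-1}zy^{j-r}$, as $j\ge 1$ and $1\le r\le j$ vary, are pairwise distinct and hence $R$-linearly independent in the free algebra. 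With these in hand, most of the verifications are one-line substitutions.

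Assertions (1) and (2) are instances of additivity. By Definition~\ref{noncprod} the polynomials $rf+sg$ and $fr+gs$ have $x^j$-coefficients $ra_j+sb_j$ and $a_jr+b_js$, where $f=\sum a_jx^j$ and $g=\sum b_jx^j$; substituting these into Definition~\ref{assignpol}, distributing the coefficient past $m_j$, and splitting the sum yields $\lambda_{rf+sg}=\lambda_{rf}+\lambda_{sg}$ and $\lambda_{fr+gs}=\lambda_{fr}+\lambda_{gs}$. Assertions (6) and (7) use fact (ii): since $z$ appears linearly in each monomial, the substitution $z=0$ kills every term, giving $\lambda_f(y,0)=0$, while $z=s\pm w$ splits each term as $r^{\,r'-1}(s\pm w)r^{\,j-r'}=r^{\,r'-1}sr^{\,j-r'}\pm r^{\,r'-1}wr^{\,j-r'}$; summing against the coefficients $a_j$ gives (7).

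For (5) I would substitute directly. Putting $y=0$ and using the conventions $0^{0}=1$ and $0^{t}=0$ for $t\ge 1$, the only surviving summand of $m_j(0,z)=\sum_{r=1}^{j}0^{r-1}z\,0^{j-r}$ requires $r-1=j-r=0$, forcing $j=r=1$; thus $m_1(0,z)=z$ and $m_j(0,z)=0$ for $j\ge 2$, so $\lambda_f(0,z)=a_1z$. Putting $z=1$ collapses $y^{r-1}\cdot 1\cdot y^{j-r}=y^{j-1}$, so $m_j(y,1)=\sum_{r=1}^{j}y^{j-1}=jy^{j-1}$ and hence $\lambda_f(y,1)=\sum_{j\ge 1}ja_jy^{j-1}=f'(y)$.

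The only point needing real care is the injectivity behind (3) and (4), where I would invoke fact (iii). Because the words $y^{r-1}zy^{j-r}$ are pairwise distinct, the coefficient of $zy^{j-1}$ (the term $r=1$) in $\lambda_f$ equals $a_j$ for each $j\ge 1$; hence $\lambda_f=0$ forces $a_j=0$ for all $j\ge 1$, i.e. $f$ is constant, and the converse is immediate since the defining sum starts at $j=1$. This gives (3). For (4), one direction is the computation $m_1(y,z)=z$, so $f=x$ yields $\lambda_f=z$; conversely, comparing coefficients against $z=m_1$ forces $a_1=1$ and $a_j=0$ for $j\ge 2$. I expect the main (and essentially expository) obstacle here to be that $\lambda_f$ is blind to the constant term $a_0$: strictly, the converse of (4) pins down $f$ as $x$ only up to an additive constant, paralleling the word ``constant'' used in (3), so I would read (4) with that understanding (equivalently, for $f$ with vanishing constant term).
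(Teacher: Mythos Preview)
Your argument is correct and complete. The paper itself offers no proof of this Fact; it is introduced as a list of ``straightforward properties'' and left without justification, so there is no alternative approach to compare against. Your reduction to the three structural observations about the $m_j$ is exactly the intended reading.

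Your caveat about item (4) is well taken and worth stating plainly: as written, the converse direction is false, since $\lambda_f$ does not see the constant term of $f$. Indeed $\lambda_{x+c}(y,z)=m_1(y,z)=z$ for every $c\in R$, so $\lambda_f(y,z)=z$ only forces $f=x+c$ for some constant $c$. This is the natural companion to item (3), which already phrases the degenerate case as ``$f$ is constant'' rather than ``$f=0$''. The paper's later uses of (4) are unaffected by this slip, but your reading (``up to an additive constant'') is the correct one.
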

		Now, we are able to state and  prove a general form of \cite[Lemma~2.6]{polysev}, which will play the same role  later in our context.
		
		\begin{lemma}\label{polyeval}
			Let $R$ be a ring and $a,b_1,\ldots,b_k\in R$.
			\begin{enumerate}
				\item
				If $f\in R[x]$ and $\lambda_f $ is its assigned polynomial  
				then
				\[			f(a+\sum\limits_{i=1}^{k}b_i\alfa_i)=f(a)+\sum\limits_{i=1}^{k}\lambda_f(a,b_i)  \alfa_i.
				\]
				\item
				If $f\in {\Ralfak}[x]$ and     $f_0,\ldots, f_k$ are the unique polynomials in $R[x]$
				such that $f =f_0 +\sum\limits_{i=1}^{k}f_i \alfa_i$, then 
				\[			f(a +\sum\limits_{i=1}^{k}b_i\alfa_i)=f_0(a)+ \sum\limits_{i=1}^{k}(\lambda_{f_0}(a,b_i)+f_i(a ))\alfa_i,
				\] where $\lambda_{f_0}$ is the assigned polynomial to $f_0$.\label{2ndcal}
			\end{enumerate}
		\end{lemma}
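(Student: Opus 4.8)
The plan is to prove part~(1) by a direct expansion and then deduce part~(2) from it. Write $f=\sum_{j=0}^{n}a_jx^j$ and abbreviate $\delta=\sum_{i=1}^{k}b_i\alfa_i$, so that the element substituted is $a+\delta$. The first step is the observation $\delta^2=0$: since each $\alfa_i$ is central and $\alfa_i\alfa_j=0$, every cross term satisfies $b_i\alfa_i b_j\alfa_j=b_ib_j\alfa_i\alfa_j=0$. This nilpotency is the non-commutative analogue of the relation that drives the commutative argument of \cite[Lemma~2.6]{polysev}.

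Next I would compute $(a+\delta)^j$. Because $\delta^2=0$, expanding the product of $j$ factors each equal to $a+\delta$ leaves only the all-$a$ term together with the terms containing exactly one $\delta$, so $(a+\delta)^j=a^j+\sum_{r=1}^{j}a^{r-1}\delta a^{j-r}$. Here non-commutativity genuinely intervenes: I cannot collapse the single-$\delta$ terms into $ja^{j-1}\delta$ as in the commutative case, and this is exactly why the polynomial $m_j(y,z)=\sum_{r=1}^{j}y^{r-1}zy^{j-r}$ of Definition~\ref{assignpol} is the correct bookkeeping device. Using centrality of the $\alfa_i$ to move them to the right gives $a^{r-1}\delta a^{j-r}=\sum_i a^{r-1}b_i a^{j-r}\alfa_i$, hence $(a+\delta)^j=a^j+\sum_i m_j(a,b_i)\alfa_i$. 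Substituting into $f$ by right substitution, $f(a+\delta)=\sum_j a_j(a+\delta)^j=\sum_j a_j a^j+\sum_i\bigl(\sum_j a_j m_j(a,b_i)\bigr)\alfa_i$, where once more centrality lets each $\alfa_i$ pass through the coefficients $a_j$. Since the $j=0$ summand of $m_j$ is the empty sum $0$, the inner sum is precisely $\lambda_f(a,b_i)$ by Definition~\ref{onevarsub}, which establishes part~(1).

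For part~(2) I would decompose $f=f_0+\sum_i f_i\alfa_i$ and use additivity of right substitution, so that $f(a+\delta)=f_0(a+\delta)+\sum_i (f_i\alfa_i)(a+\delta)$. The pure part $f_0(a+\delta)$ is handled by part~(1). For each summand $f_i\alfa_i$ the key simplification is $\alfa_i\delta=0$ (again by centrality and $\alfa_i\alfa_j=0$), which forces $\alfa_i(a+\delta)^j=\alfa_i a^j=a^j\alfa_i$; consequently $(f_i\alfa_i)(a+\delta)=f_i(a)\alfa_i$. Collecting the coefficient of each $\alfa_i$ then yields $f_0(a)+\sum_i\bigl(\lambda_{f_0}(a,b_i)+f_i(a)\bigr)\alfa_i$, as claimed.

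The main obstacle is conceptual rather than computational: one must recognise that the right replacement for the binomial/derivative term is $m_j(a,b_i)$ rather than $ja^{j-1}b_i$, and then verify that the single relation $\delta^2=0$ together with the centrality of the $\alfa_i$ is exactly what is needed both to truncate the expansion of $(a+\delta)^j$ after its linear term and to pull every $\alfa_i$ to the right. Once this is in place, both parts reduce to routine regrouping of sums.
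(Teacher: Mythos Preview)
Your approach is essentially identical to the paper's: both establish the power formula $(a+\delta)^j=a^j+\sum_i m_j(a,b_i)\alfa_i$ first and then substitute into $f$, with part~(2) deduced directly from part~(1). One small caution on your justification: in a non-commutative ring the relation $\delta^2=0$ by itself does \emph{not} eliminate terms such as $\delta a^m\delta$ that appear in the expansion of $(a+\delta)^j$; what you actually need (and what your centrality computation for $\delta^2=0$ already yields) is the stronger fact $\delta r\delta=0$ for every $r\in R$, since $b_i\alfa_i\, r\, b_j\alfa_j=b_i r b_j\,\alfa_i\alfa_j=0$.
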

		\begin{proof}
			(1) By induction and the fact that $\alfa_i\alfa_j=0$ for $1\le i,j\le k$, we have that for every 
			$ a,b_1,\ldots,b_k\in R$, 
			\begin{equation}\label{eqforeval}
				(a+\sum\limits_{i=1}^{k}b_i\beta_i)^j=a^j +\sum\limits_{i=1}^{k}\sum\limits_{r=1}^{j}a^{r-1}b_ia^{j-r}\beta_i=a^j +\sum\limits_{i=1}^{k}m_j(a,b_i)\beta_i,\end{equation} 
			where $m_j(a,b_i)=\sum\limits_{r=1}^{j}a^{r-1}b_ia^{j-r}$.
			
			Now, writing $f=\sum\limits_{j=0}^{n}a_jx^j\in R[x]$ yields, by identity~(\ref{eqforeval}) and Definition~\ref{lmadfunction},
			\[f(a+\sum\limits_{i=1}^{k}b_i\beta_i)=\sum\limits_{j=1}^{n}a_j(a+\sum\limits_{i=1}^{k}b_i\beta_i)^j +a_0=\sum\limits_{j=0}^{n}a_ja^j+\sum\limits_{j=1}^{n}a_j\sum\limits_{i=1}^{k}m_j(a,b_i)\beta_i=f(a)+\sum\limits_{i=1}^{k}\lambda_f(a,b_i)  \alfa_i.\] 
			
			(2) Follows from (1). 
		\end{proof}
		\begin{Notation}
			From now on,  let $R_k$ denote the ring ${\Ralfak}$.
		\end{Notation}
		
		A consequence of Lemma~\ref{polyeval}  is the following result. 
	\begin{corollary}\label{CHSneccondit}
			Let $F\colon R_k\longrightarrow R_k$ be a polynomial function and  
			let $a ,b_{1},\ldots,b_k\in R$.
				Then:
			\begin{enumerate}
				\item The constant coefficient of  $F(a+\sum\limits_{i=1}^{k}b_i \alfa_i)$ depends only on $a$;
				\item The  coefficient of $\alfa_i$ in $F(a+\sum\limits_{i=1}^{k}b_i \alfa_i)$ depends only on $a$ and $b_i$.
			\end{enumerate}
			
		\end{corollary}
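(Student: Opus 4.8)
The plan is to reduce everything to a single application of Lemma~\ref{polyeval}(\ref{2ndcal}). Since $F$ is a polynomial function on $R_k$, there is a polynomial $f\in R_k[x]$ with $F=[f]$, so that $F(c)=f(c)$ for every $c\in R_k$; in particular $F(a+\sum_{i=1}^{k}b_i\alfa_i)=f(a+\sum_{i=1}^{k}b_i\alfa_i)$, where I read the evaluation point in both parts as $a+\sum_{i=1}^{k}b_i\alfa_i$. Using the free-module structure of $R_k[x]$ over $R[x]$ recorded after Definition~\ref{CHS001}, I would write $f$ in its unique form $f=f_0+\sum_{i=1}^{k}f_i\alfa_i$ with $f_0,\ldots,f_k\in R[x]$.

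First I would apply Lemma~\ref{polyeval}(\ref{2ndcal}) to the element $a+\sum_{i=1}^{k}b_i\alfa_i$, which gives
\[F\Big(a+\sum_{i=1}^{k}b_i\alfa_i\Big)=f_0(a)+\sum_{i=1}^{k}\big(\lambda_{f_0}(a,b_i)+f_i(a)\big)\alfa_i.\]
Since $\{1,\alfa_1,\ldots,\alfa_k\}$ is a basis of $R_k$ over $R$, I can then simply read off the coordinates: the constant coefficient equals $f_0(a)$, and for each $i$ the coefficient of $\alfa_i$ equals $\lambda_{f_0}(a,b_i)+f_i(a)$.

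Both assertions follow by inspection of these coordinates. The constant coefficient $f_0(a)$ is a function of $a$ alone, which proves (1). For (2), the coefficient $\lambda_{f_0}(a,b_i)+f_i(a)$ involves only $a$ and $b_i$: by Definition~\ref{onevarsub} the summand $\lambda_{f_0}(a,b_i)$ depends only on $a$ and $b_i$, and $f_i(a)$ depends only on $a$, so the $\alfa_i$-coefficient is indeed a function of $a$ and $b_i$ alone.

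There is essentially no obstacle here, and the corollary should be presented as an immediate consequence of the lemma; the real content has already been absorbed into Lemma~\ref{polyeval}. The point worth emphasising is precisely \emph{why} the $\alfa_i$-coefficient does not feel the other $b_j$'s: this is a consequence of the relations $\alfa_i\alfa_j=0$, which annihilate all cross terms in the expansion of $(a+\sum_{i}b_i\alfa_i)^j$ in identity~(\ref{eqforeval}), so that each $\alfa_i$-slot is fed only by $b_i$. Thus the corollary is best viewed as simply recording, in coordinate language, the decoupling already supplied by the lemma.
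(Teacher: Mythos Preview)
Your proof is correct and matches the paper's approach exactly: the paper presents the corollary with no proof at all, stating it simply as ``a consequence of Lemma~\ref{polyeval}'', and your argument spells out precisely that deduction via Lemma~\ref{polyeval}(\ref{2ndcal}).
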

	\begin{remark}\label{expresnotef}

	Let $\sigma$ be a permutation of $1,\ldots,k$. Let $a_0,\dots,a_k\in R$ and let $f\in R[x]$ such that 
	$f(a_0+\sum\limits_{i=1}^{k}a_i\alfa_i)=b_0+\sum\limits_{i=1}^{k}b_i\alfa_i$, where $b_0,\dots,b_k\in R$. Then, in view of Lemma~\ref{polyeval} and Corollary~\ref{CHSneccondit}, we easily see that:
	\begin{enumerate}
		\item $f(a_0+\sum\limits_{i=1}^{k}a_i\alfa_{\sigma(i)})=b_0+\sum\limits_{i=1}^{k}b_i\alfa_{\sigma(i)}$;
		\item $f(a_0+\sum\limits_{i=1}^{k}a_{\sigma(i)}\alfa_i)=b_0+\sum\limits_{i=1}^{k}b_{\sigma(i)}\alfa_i$;
		\item $f(a_0+a_i\alfa_j)=b_0+ b_i\alfa_{j}$ for $i=1,\ldots,k$ and $j\ge 1$.
	\end{enumerate} 
 Now, if $f_ 1,\ldots,f_n\in R[x]$ such that 
  
 $[f_n]_{R_k}\circ \cdots\circ [f_1]_{R_k}(a_0+\sum\limits_{i=1}^{k}a_i\alfa_i)=f_n(\cdots(f_1(a_0+\sum\limits_{i=1}^{k}a_i\alfa_i))\cdots)=c_0+\sum\limits_{i=1}^{k}c_i\alfa_i$, where $c_0,\dots,c_k\in R$. Then inductively, we have that:
 \begin{enumerate}[label=(\alph*)]
 	\item $[f_n]_{R_k}\circ \cdots\circ [f_1]_{R_k}(a_0+\sum\limits_{i=1}^{k}a_i\alfa_{\sigma(i)})=c_0+\sum\limits_{i=1}^{k}c_i\alfa_{\sigma(i)}$;
 	\item $[f_n]_{R_k}\circ \cdots\circ [f_1]_{R_k}(a_0+\sum\limits_{i=1}^{k}a_{\sigma(i)}\alfa_i)=c_0+\sum\limits_{i=1}^{k}c_{\sigma(i)}\alfa_i$;
 	\item $[f_n]_{R_m}\circ \cdots\circ [f_1]_{R_m}(a_0+a_i\alfa_j)=c_0+ c_i\alfa_{j}$ ($m=max(i,j)$) for $i=1,\ldots,k$ and $j\ge 1$.
 \end{enumerate}
	\end{remark}
 In the following $F\vert_ A$ stands for the restriction of the function $F$ to $A$.
\begin{corollary}\label{indepfromnumbe}
Let $k>1$ and $F,G\in \overline{\PolFun[R_k]}$. Suppose that $F=[f_n]_{R_k}\circ \cdots\circ [f_1]_{R_k}$ and $G=[g_m]_{R_k}\circ \cdots\circ [g_1]_{R_k}$, where $f_1,\ldots, f_n,g_1,\ldots,g_m\in R[x]$. Then the following statements are equivalent:
\begin{enumerate}
	\item $F=G$;
	\item  $ F\vert_{R_1}=G\vert_{R_1} $; 
	\item $F\vert_{R_j} = G\vert_{R_j} $  for $j=1,\ldots,k-1$.
\end{enumerate}
\begin{proof}
	The implications  (1) $\Rightarrow$ (3) and (3) $\Rightarrow$ (2) are obvious.
	
	(2)$\Rightarrow$ (1):
	assume  that $ F\vert_{R_1}=G\vert_{R_1} $. 
	Let $a_0,\ldots,a_k\in R$ be arbitrary. We want to show that
		\[F(a_0+\sum\limits_{i=1}^{k}a_i\alfa_i)=G(a_0+\sum\limits_{i=1}^{k}a_i\alfa_i).\]
		Suppose that  $F(a_0+\sum\limits_{i=1}^{k}a_i\alfa_i)= b_0+\sum\limits_{i=1}^{k}b_i\alfa_i$ and 
		$G(a_0+\sum\limits_{i=1}^{k}a_i\alfa_i)= c_0+\sum\limits_{i=1}^{k}c_i\alfa_i$, where $b_0,\ldots,b_k,c_0,\ldots,c_k\in R$.
		
		By Remark~\ref{expresnotef},  $F(a_0+a_i\alfa_1)=b_0+b_i\alfa_1$ and  
		$G(a_0+a_i\alfa_1)=c_0+c_i\alfa_1$  for $i=1,\ldots,k$.
But, by assumption, $ F\vert_{R_1}=G\vert_{R_1} $. Therefore,
		 \[b_0+b_i\alfa_1=c_0+c_i\alfa_1 \text{ for }i=1,\ldots,k.\]
		 Thus, $b_i=c_i$ for $i=0,\ldots,k$, whence $F=G$.
\end{proof}
\end{corollary}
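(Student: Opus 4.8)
My plan is to prove the three statements equivalent by dispatching the cheap implications in one line each and then concentrating all the work on $(2)\Rightarrow(1)$. The implications $(1)\Rightarrow(2)$, $(1)\Rightarrow(3)$, and $(3)\Rightarrow(2)$ are immediate: each merely restricts an equality of functions from a larger subring to a smaller one, along the containments $R_1\subseteq R_j\subseteq R_k$ (and $(3)$ includes the case $j=1$, which is $(2)$). So the genuine content is to show that if $F$ and $G$ agree on the single-variable subring $R_1=\{a_0+a_1\beta_1:a_0,a_1\in R\}$, then they agree on all of $R_k$.

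For $(2)\Rightarrow(1)$ the idea I would use is a ``projection to $R_1$'' argument resting on the rigid coordinate behaviour of functions in $\overline{\PolFun[R_k]}$. Fix an arbitrary $a_0+\sum_{i=1}^k a_i\beta_i\in R_k$ and write the images as $F(a_0+\sum_{i=1}^k a_i\beta_i)=b_0+\sum_{i=1}^k b_i\beta_i$ and $G(a_0+\sum_{i=1}^k a_i\beta_i)=c_0+\sum_{i=1}^k c_i\beta_i$. The crucial structural input is that, since $F$ and $G$ are compositions of polynomial functions induced by polynomials in $R[x]$, the coefficient of $\beta_i$ in the output depends only on $a_0$ and $a_i$ and is insensitive to the index $i$ itself; this is precisely item~(c) of the composition statements collected in Remark~\ref{expresnotef}. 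I would invoke it to obtain $F(a_0+a_i\beta_1)=b_0+b_i\beta_1$ and $G(a_0+a_i\beta_1)=c_0+c_i\beta_1$ for each $i=1,\ldots,k$, with the \emph{same} ring elements $b_i,c_i$ that occur in the full evaluations.

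Since each $a_0+a_i\beta_1$ lies in $R_1$, the hypothesis $F\vert_{R_1}=G\vert_{R_1}$ forces $b_0+b_i\beta_1=c_0+c_i\beta_1$ for every $i$, and the uniqueness of the representation in the central basis $\{1,\beta_1,\ldots,\beta_k\}$ then yields $b_0=c_0$ and $b_i=c_i$ for all $i$. Hence the two full images coincide, and as the point was arbitrary I conclude $F=G$.

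The one step I would handle with care — and the only place any real proof is hidden — is the claim that evaluating at the projected point $a_0+a_i\beta_1\in R_1$ literally recovers the $\beta_i$-coefficient $b_i$ of the full evaluation. With Remark~\ref{expresnotef} in hand this is a direct citation, but if I had to build it from scratch I would argue by induction on the length $n$ of the composition: at $n=1$, Lemma~\ref{polyeval}(1) gives the $\beta_i$-coefficient of $f(a_0+\sum a_i\beta_i)$ as $\lambda_f(a_0,a_i)$, a manifestly position-free function of $a_0$ and $a_i$ (this is also the content of Corollary~\ref{CHSneccondit} for a single polynomial function), and at the inductive step one applies Lemma~\ref{polyeval}(1) once more to the outermost polynomial, which propagates this position-independence through the composition. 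Everything else in the argument is bookkeeping with the basis representation.
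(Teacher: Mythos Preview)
Your proposal is correct and follows essentially the same approach as the paper's proof: both dispatch the trivial implications and then establish $(2)\Rightarrow(1)$ by projecting an arbitrary point of $R_k$ to $R_1$ via Remark~\ref{expresnotef}(c), reading off that the $\beta_i$-coefficients $b_i,c_i$ of the full evaluations already appear in the $R_1$-evaluations, and concluding from the hypothesis and the uniqueness of the basis representation. Your additional paragraph sketching an inductive proof of the key projection step (via Lemma~\ref{polyeval}(1)) is a helpful expansion of what the paper simply cites from Remark~\ref{expresnotef}, but the overall strategy is identical.
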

		\section{Polynomial functions  on $R_k$}\label{CHSsc3}
		In this section, we characterize null polynomials on the ring $R_k$.  This characterization allows us to determine whether two polynomials are equivalent on $R_k$, that is whether they represent the same polynomial function on $R_k$. Finally, we give a counting formula of the polynomial functions of the ring $R_k$
		by means of the indices of the left ideal  $\Null$ (see Definition~\ref{nulpol}) and  a related  left ideal $A\Null$ defined below.
		
		Next, we define a subset of $\Null$ concerning the assigned polynomials of Definition~\ref{assignpol}.
		\begin{definition} \label{Asnull}
			\leavevmode
			\begin{enumerate}
				\item Let $f \in R[x]$ and let $\lambda_f$ be its assigned polynomial. We call $\lambda_f $  null if $ \lambda_f(a,b)=0$ for every $a,b\in R$. We write  $[\lambda_f(y,z) ]=0$.  	
				\item 	We define $A\Null$ as:
				$A\Null=\{ f\in\Null\mid [\lambda_f(y,z) ]=0\}$.
				\item We define $ \Nulld$ as:
				$\Nulld=\{ f\in\Null\mid f'\in \Null\}$.
				
			\end{enumerate}
		\end{definition}
		\begin{remark}\label{nulremark}
			\leavevmode
			\begin{enumerate}
				\item It is obvious  that $A\Null$ and $\Nulld$  are left ideals of $R[x]$ with $A\Null,\Nulld\subseteq \Null$.
				\item Let $f\in A\Null$. Then the condition $[\lambda_{ f}(y,z)]=0$ implies that
				$\lambda_{ f}(a,1)=f'(a)=0$ for every $a\in R$ (by Fact~\ref{lamdapro}). Hence $f\in\Nulld$.
				Thus, we have the following inclusion
				$$A\Null\subseteq \Nulld\subseteq \Null.$$
				\item When $R$ is commutative the condition on $\lambda_{ f}$ in the definition of $A\Null$ is equivalent to $f'\in \Null$. So, $A\Null =\Nulld$ over a commutative ring $R$. However, when $R$ is a non-commutative ring,  it can  happen that $\Nulld$ contains  $A\Null$ properly (see Example~\ref{nulldnotAnull}).      We will see that the left ideal $A\Null$ plays the same role as $\Nulld$ in the context of commutative rings. More precisely, we will show that most of the results of \cite{Haki,polysev} involving $\Nulld$ still hold here in the context of non-commutative rings by replacing $\Nulld$ by $A\Null$.
				
			\end{enumerate}
			\end{remark}
		\begin{lemma}\label{CHS31} 
			Let $g\in R[x]$. Then:
			\begin{enumerate}
				\item
				$g\in \Null[R_k]$   if and only if 
				$ g\in A\Null $;
				\item
				$g \alfa_i\in \Null[R_k]$   for every $1\le i\le k$ if and only if 
				$g\in\Null$.
			\end{enumerate}
		\end{lemma}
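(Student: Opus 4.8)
The plan is to reduce both parts to the evaluation formula of Lemma~\ref{polyeval} together with the elementary observation that, since $\{1,\alfa_1,\ldots,\alfa_k\}$ is a free $R$-basis of $R_k$, an element $r_0+\sum_{i=1}^{k}r_i\alfa_i$ of $R_k$ vanishes if and only if each of $r_0,r_1,\ldots,r_k$ vanishes. Because every element of $R_k$ has the form $a+\sum_{i=1}^{k}b_i\alfa_i$ with $a,b_i\in R$, testing whether a polynomial is null on $R_k$ reduces to checking the coefficients of its values at all such arguments.

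For part (1), I would apply Lemma~\ref{polyeval}(1) to $g\in R[x]$: for all $a,b_1,\ldots,b_k\in R$ we have
\[g\Big(a+\sum_{i=1}^{k}b_i\alfa_i\Big)=g(a)+\sum_{i=1}^{k}\lambda_g(a,b_i)\,\alfa_i.\]
By the freeness noted above, this value is $0$ for every choice of $a$ and $b_1,\ldots,b_k$ precisely when $g(a)=0$ for all $a\in R$ and $\lambda_g(a,b)=0$ for all $a,b\in R$; the running index $i$ plays no role, as $b_i$ already ranges over all of $R$. The first condition is $g\in\Null$ and the second is $[\lambda_g(y,z)]=0$, so together, by Definition~\ref{Asnull}, they are equivalent to $g\in A\Null$. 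This settles (1) with no real computation.

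For part (2), the key step is to evaluate $g\alfa_i$, regarded as an element of $R_k[x]$, at a general argument. Writing $g\alfa_i=0+\sum_{l=1}^{k}f_l\alfa_l$ with $f_i=g$ and $f_l=0$ for $l\ne i$, I would invoke Lemma~\ref{polyeval}(\ref{2ndcal}); since the pure part is $0$, all the $\lambda$-terms disappear and one is left with
\[(g\alfa_i)\Big(a+\sum_{l=1}^{k}b_l\alfa_l\Big)=g(a)\,\alfa_i.\]
Alternatively this follows by a direct computation from identity~(\ref{eqforeval}), using the centrality of $\alfa_i$ and $\alfa_i\alfa_l=0$ to annihilate every cross term. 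Freeness again shows that this value vanishes for all arguments if and only if $g(a)=0$ for every $a\in R$, that is $g\in\Null$; and since each $i$ produces the same condition, the quantifier ``for every $1\le i\le k$'' is automatically satisfied.

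The argument is short, and I do not anticipate a genuine obstacle: everything rests on Lemma~\ref{polyeval} and on reading off coefficients in the free basis. The only point needing a moment's care is verifying in part (2) that the $\lambda_{f_0}$-contributions really drop out, which is immediate once the pure part $f_0$ is taken to be $0$.
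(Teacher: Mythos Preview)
Your proof is correct and follows essentially the same approach as the paper: both arguments apply Lemma~\ref{polyeval} to reduce nullity on $R_k$ to coefficientwise vanishing with respect to the free basis $\{1,\alfa_1,\ldots,\alfa_k\}$. Your treatment of part~(2) via Lemma~\ref{polyeval}(\ref{2ndcal}) with pure part $f_0=0$ merely spells out what the paper dismisses as ``Straightforward.''
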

		
		\begin{proof} (1) 
			Let	$a,b_1,\ldots,b_k\in R$. Then by Lemma~\ref{polyeval},
			\[
			g(a+\sum\limits_{i=1}^{k}b_i\alfa_i)=g(a)+\sum\limits_{i=1}^{k}\lambda_g(a,b_i)  \alfa_i.
			\]  So, $g$ is a null polynomial on $R_k$ is equivalent to	\[
			g(a+\sum\limits_{i=1}^{k}b_i\alfa_i)=g(a)+\sum\limits_{i=1}^{k}\lambda_g(a,b_i)  \alfa_i
			=0\] for every $a,b_1,\ldots,b_k\in R$.

			A fortiori, this is  equivalent to $g(a )=0$ and $\lambda_g(a,b_i)= 0$ for all $a ,b_i\in R$ and $i=1,\ldots,k$. But this is equivalent to $g\in \Null$ and  $[\lambda_g(y,z)]= 0$, whence  it is equivalent to $g\in A\Null$.
			
			(2) Straightforward.  
		\end{proof}
		
		\begin{theorem} \label{CHS4} 
			Let $\Null$ and $A\Null$ be as in Definition~\ref{nulpol} and Definition~\ref{Asnull}, respectively. 	
			Then
			\begin{enumerate}
				\item $\Null[R_k]=A\Null+\sum\limits_{i=1}^{k}\Null\alfa_i$;
				\item $\Null[R_k]$ is an ideal of $R_k[x]$ if and only if $A\Null$ and $\Null$ are ideals of $R[x]$.
			\end{enumerate} 
			
		\end{theorem}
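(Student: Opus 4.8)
The plan is to prove both parts from three ingredients already available: the evaluation formula of Lemma~\ref{polyeval}, the two statements of Lemma~\ref{CHS31}, and the criterion of Corollary~\ref{leftull}(2) that a null left ideal is two-sided precisely when it is a right module over its coefficient ring. Throughout I use the unique decomposition $f=f_0+\sum_{i=1}^{k}f_i\alfa_i$ of a polynomial $f\in R_k[x]$ with $f_0,\ldots,f_k\in R[x]$, together with the centrality of each $\alfa_i$ in $R_k[x]$ and the relations $\alfa_i\alfa_j=0$. For part (1) I would establish the biconditional that $f\in\Null[R_k]$ if and only if $f_0\in A\Null$ and $f_i\in\Null$ for every $i\ge 1$; by uniqueness of the $R$-basis representation this identifies $A\Null+\sum_i\Null\alfa_i$ with exactly the set of such $f$, giving the claimed equality. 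For the forward direction, evaluating $f$ at an arbitrary $a+\sum_i b_i\alfa_i\in R_k$ via Lemma~\ref{polyeval}(\ref{2ndcal}) and reading off coefficients forces $f_0(a)=0$ (so $f_0\in\Null$) and $\lambda_{f_0}(a,b_i)+f_i(a)=0$ for all $a,b_i\in R$; putting $b_i=0$ and using $\lambda_{f_0}(a,0)=0$ (Fact~\ref{lamdapro}) gives $f_i\in\Null$, whence $\lambda_{f_0}(a,b_i)=0$ identically, i.e. $f_0\in A\Null$. The reverse direction is immediate from Lemma~\ref{CHS31}: $f_0\in A\Null$ gives $f_0\in\Null[R_k]$, each $f_i\in\Null$ gives $f_i\alfa_i\in\Null[R_k]$, and $\Null[R_k]$ is an additive group.

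For part (2) I would first reduce the ideal question to a right-module question: since $\Null[R_k]$ is a left ideal of $R_k[x]$ (Corollary~\ref{leftull}(1) applied to $R_k$), Corollary~\ref{leftull}(2) applied to $R_k$ says it is two-sided iff it is closed under right multiplication by constants $c=c_0+\sum_j c_j\alfa_j\in R_k$. Using centrality and $\alfa_i\alfa_j=0$ one computes $fc=f_0c_0+\sum_{i=1}^{k}(f_0c_i+f_ic_0)\alfa_i$. For the implication ($\Leftarrow$), assuming $A\Null$ and $\Null$ are ideals of $R[x]$ and that $f\in\Null[R_k]$ (so $f_0\in A\Null$, $f_i\in\Null$ by part (1)), the pure part $f_0c_0$ lies in $A\Null$ and each $\alfa_i$-coefficient $f_0c_i+f_ic_0$ lies in $\Null$ (here $f_0\in A\Null\subseteq\Null$), so $fc\in\Null[R_k]$ by part (1); thus $\Null[R_k]$ is an $R_k$-right module and therefore an ideal.

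For ($\Rightarrow$), assume $\Null[R_k]$ is an ideal of $R_k[x]$. Because $R$ embeds in $R_k$, we have $R[x]$ as a subring of $R_k[x]$, and Lemma~\ref{CHS31}(1) gives $A\Null=\Null[R_k]\cap R[x]$; as the intersection of a two-sided ideal with a subring, $A\Null$ is then a two-sided ideal of $R[x]$. For $\Null$, take $f\in\Null$ and $g\in R[x]$: then $f\alfa_1\in\Null[R_k]$ by Lemma~\ref{CHS31}(2), hence $(f\alfa_1)g=(fg)\alfa_1\in\Null[R_k]$ using centrality of $\alfa_1$ and the right-ideal property; reading off the $\alfa_1$-coefficient via part (1) forces $fg\in\Null$, so $\Null$ is a right ideal and hence (being already a left ideal) two-sided.

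The main obstacle is precisely the right-ideal direction: in the non-commutative setting right multiplication is not compatible with substitution, so there is no direct evaluation argument. Two devices resolve this. First, Corollary~\ref{leftull}(2) lets me replace closure under all of $R_k[x]$ by the finite, explicit closure under constants of $R_k$, keeping the product $fc=f_0c_0+\sum_i(f_0c_i+f_ic_0)\alfa_i$ transparent. Second, the centrality of the $\alfa_i$ is used twice: it validates that product formula and, through the identity $(fg)\alfa_1=(f\alfa_1)g$, it transports the two-sidedness of $\Null[R_k]$ back down to $\Null$. I would also be careful that the coefficient extraction underlying part (1) is justified by uniqueness of the free $R$-basis representation rather than by any commutativity.
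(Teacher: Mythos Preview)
Your proof is correct and follows essentially the same route as the paper. For part~(1) both arguments evaluate $f$ via Lemma~\ref{polyeval} and read off coefficients in the free $R$-basis; the paper specialises first to $r\in R$ and then to $a+b\alfa_1$, whereas you evaluate at a general element and set $b_i=0$, but these are the same idea. For part~(2), the paper's $(\Rightarrow)$ direction multiplies $h=f+g\alfa_1$ (with $f\in A\Null$, $g\in\Null$) on the right by $r\in R[x]$ and reads off components from part~(1), treating both ideals at once; your variant of identifying $A\Null=\Null[R_k]\cap R[x]$ as the intersection of a two-sided ideal with a subring is a clean alternative for $A\Null$, and your $\Null$ argument via $(f\alfa_1)g=(fg)\alfa_1$ is essentially the paper's. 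The paper leaves the $(\Leftarrow)$ direction to the reader, so your explicit computation of $fc=f_0c_0+\sum_i(f_0c_i+f_ic_0)\alfa_i$ after reducing to constants via Corollary~\ref{leftull}(2) simply fills in that gap; one could equally well multiply by a general $r=r_0+\sum_i r_i\alfa_i\in R_k[x]$ and use the same formula at the $R[x]$ level, bypassing the reduction to constants.
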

		
		\begin{proof}
			(1) By Lemma~\ref{CHS31},  $ A\Null+\sum\limits_{i=1}^{k}\Null\alfa_i\subseteq \Null[R_k]$. For the other inclusion let $f\in \Null[R_k]$. Then $f= f_0 +\sum\limits_{i=1}^{k}f_i \alfa_i$, for some  
			$f_0,\ldots, f_k \in R[x]$.  Let $r\in R$ be arbitrary. Then, by Lemma~\ref{polyeval} and Fact~\ref{lamdapro} since $f\in \Null[R_k]$,
			\[0=f(r)=f_0(r)+ \sum\limits_{i=1}^{k} f_i(r)\alfa_i.\] Thus, since $1,\alfa_1,\ldots,\alfa_k$ is a base of $R_k$ as an $R$-algebra, $ f_i(r)=0$ ($i=0,\ldots,k$) for every $r\in R $, which implies that $f_0,f_1,\ldots,f_k\in \Null$. Now, let $a,b\in R$ be two arbitrary elements. Again by  Lemma~\ref{polyeval} and Fact~\ref{lamdapro}, and what we have  already shown,
			\[0=f(a+b\alfa_1)=\lambda_{ f_0}(a,b).\] 
			Thus, $\lambda_{ f_0}(a,b)=0$ for every $a,b\in R$, whence $[\lambda_{ f_0}(a,b)]=0$. Hence $f_0\in A\Null$. This finishes the proof of the other inclusion. 
			
			(2) 
			Keeping in mind that $A\Null, \Null$  are left ideals of $R[x]$ and $\Null[R_k]$ is a left ideal of $R_k[x]$. ($\Rightarrow$) Assume  that $\Null[R_k]$ is an ideal of $R_k[x]$ and let $f\in A\Null$ and $g\in\Null$.
			Then, by part (1),  the polynomial $h=f+g\alfa_1\in \Null[R_k]$. Hence, $hr= fr+gr\alfa_1 \in \Null[R_k]$ for every polynomial $r\in R_k[x]$ since $\Null[R_k]$ is an ideal, whence in particular, $hr\in \Null[R_k]$ for every $r\in R[x]$. Thus, by the first part  $fr\in A\Null$ and $gr\in \Null$ for every $r\in R[x]$. Therefore, $A\Null$ and $\Null$ are ideals of $R[x]$. The other implication is somewhat similar and left to the reader.
		\end{proof}
		\begin{proposition}\label{sumueqcond}
			Let $R$ be a finite non-commutative ring and $R_k$ be the ring of dual numbers of $k$ variables.
			Then the following statements are equivalent:
			\begin{enumerate}
				\item every element of $R$ is a sum of units;
				\item every element of $R_k$ is a sum of units;
				\item $R /J(R)$ has no factor ring isomorphic to  $\mathbb{F}_2\times \mathbb{F}_2$;
				\item $R_k/J(R_k)$ has no factor ring isomorphic to   $\mathbb{F}_2\times \mathbb{F}_2$.
			\end{enumerate}
		\end{proposition}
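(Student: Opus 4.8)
The plan is to lean on the theorem of Stewart recalled in Remark~\ref{unitsum}, which for a \emph{finite} ring $A$ equates ``every element of $A$ is a sum of units'' with ``$A/J(A)$ has no factor ring isomorphic to $\mathbb{F}_2\times\mathbb{F}_2$''. Since $R$ is finite and $R_k$ is a free $R$-module of rank $k+1$, hence also finite, this theorem applies to both rings: it yields (1)$\Leftrightarrow$(3) by taking $A=R$, and (2)$\Leftrightarrow$(4) by taking $A=R_k$. Thus the whole proposition reduces to the single equivalence (3)$\Leftrightarrow$(4), and for that it suffices to show that the two relevant quotients coincide, i.e. $R_k/J(R_k)\cong R/J(R)$ as rings; isomorphic rings have the same factor rings, so the $\mathbb{F}_2\times\mathbb{F}_2$ condition transfers verbatim.

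To produce this isomorphism I would use the radical computation $J(R_k)=J(R)+\sum_{i=1}^{k}\alfa_i R$ from Proposition~\ref{dualpro}(3). Consider the composite $\varphi\colon R\hookrightarrow R_k \twoheadrightarrow R_k/J(R_k)$ of the canonical embedding $r\mapsto r\cdot 1$ with the quotient map; it is a ring homomorphism. For surjectivity, note that any element of $R_k$ has the form $r_0+\sum_{i=1}^{k}r_i\alfa_i$ with the $\alfa_i$-part lying in $\sum_{i=1}^{k}\alfa_i R\subseteq J(R_k)$, so it is congruent to $r_0$ modulo $J(R_k)$; hence $\varphi(r_0)$ hits its class. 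For the kernel, I would check that $J(R_k)\cap R=J(R)$: an element $j+\sum_{i=1}^{k}s_i\alfa_i\in J(R_k)$ (with $j\in J(R)$, $s_i\in R$) lies in $R$ exactly when all $s_i=0$, by uniqueness of the representation over the basis $\{1,\alfa_1,\ldots,\alfa_k\}$, and then it equals $j\in J(R)$. Therefore $\ker\varphi=J(R)$, and the first isomorphism theorem gives $R/J(R)\cong R_k/J(R_k)$, closing the chain.

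The argument is essentially routine once Stewart's theorem is invoked; the only point demanding a little care is the identification $\ker\varphi = J(R_k)\cap R = J(R)$, which is where the structure of $R_k$ (the central, square-zero, linearly independent $\alfa_i$) is actually used. As an alternative that avoids citing Stewart at the last step, one can instead prove (1)$\Leftrightarrow$(2) directly: the constant-coefficient map $\pi\colon R_k\to R$, $r_0+\sum_{i=1}^{k}r_i\alfa_i\mapsto r_0$, is a ring homomorphism by Proposition~\ref{dualpro}(1a), and by Proposition~\ref{dualpro}(1b) an element of $R_k$ is a unit iff its constant coefficient is a unit in $R$. Applying $\pi$ to a unit-sum expression transports it from $R_k$ down to $R$, giving (2)$\Rightarrow$(1); conversely, writing $r_0$ as a sum of units in $R$ and absorbing $\sum_{i=1}^{k}r_i\alfa_i$ into one summand (which stays a unit since its constant coefficient is unchanged) gives (1)$\Rightarrow$(2). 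Either route closes the circle of equivalences.
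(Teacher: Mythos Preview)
Your proof is correct and follows essentially the same approach as the paper: invoke Stewart's result (Remark~\ref{unitsum}) to reduce to (3)$\Leftrightarrow$(4), then establish $R_k/J(R_k)\cong R/J(R)$ via the radical computation of Proposition~\ref{dualpro}(3). You supply more detail on the isomorphism (and a pleasant alternative direct argument for (1)$\Leftrightarrow$(2)), whereas the paper simply asserts the quotient isomorphism in one line.
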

		\begin{proof}
			By Remark~\ref{unitsum}, we need to show only (3) $\Leftrightarrow$ (4).
			By Proposition~\ref{dualpro}, $J(R_k)=J(R)+\sum\limits_{i=1}^{k}\alfa_i R$.
			Then one easily sees that
			\[R_k/J(R_k)=(R+\sum\limits_{i=1}^{k}\alfa_i R)/(J(R)+\sum\limits_{i=1}^{k}\alfa_i R) \cong R /J(R). \]	
		\end{proof}
		Combining Remark~\ref{unitsum}, Proposition~\ref{sumueqcond} and Theorem~\ref{CHS4} yields the following
		\begin{corollary}
			Suppose that $R$ (alternatively $R_k$) satisfies the conditions of Proposition~\ref{sumueqcond}. Then
			\begin{enumerate}
				\item $\Null[R_k]$ is an ideal of $R_k[x]$;
				\item $\Null$ and $A\Null$ are ideals of $R[x]$.
			\end{enumerate}
		\end{corollary}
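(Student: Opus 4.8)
The plan is to chain together the three cited results so that the sum-of-units hypothesis on $R$ (or $R_k$) first yields that $\Null[R_k]$ is a two-sided ideal of $R_k[x]$, and then to transport that conclusion back down to $R$ via Theorem~\ref{CHS4}. Since every ingredient is already in hand, the proof is a matter of invoking them in the right order.

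First I would use Proposition~\ref{sumueqcond} to normalise the hypothesis. The statement assumes that $R$, or \emph{alternatively} $R_k$, satisfies the listed conditions; but by the equivalence (1)$\Leftrightarrow$(2) of that proposition it is immaterial which of the two rings one starts from, since every element of $R$ is a sum of units exactly when every element of $R_k$ is. In particular, under the hypothesis every element of the finite ring $R_k$ is a sum of units (and $R_k$ is indeed finite, being a free $R$-module of rank $k+1$ over the finite ring $R$).

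For part (1), I would then apply Werner's criterion as recorded in Remark~\ref{unitsum}: in a finite non-commutative ring all of whose elements are sums of units, the set of null polynomials forms a two-sided ideal. Applying this to $R_k$ gives at once that $\Null[R_k]$ is an ideal of $R_k[x]$. For part (2), I would feed this into the equivalence of Theorem~\ref{CHS4}(2), which says that $\Null[R_k]$ is an ideal of $R_k[x]$ \emph{if and only if} both $A\Null$ and $\Null$ are ideals of $R[x]$. Part (1) supplies the left-hand side, so the forward direction delivers immediately that $A\Null$ and $\Null$ are ideals of $R[x]$.

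Because each step is a direct citation, there is essentially no computational obstacle; the only point requiring a little care is the bookkeeping about which ring the sum-of-units hypothesis is imposed on, which is exactly why Proposition~\ref{sumueqcond} is invoked first to render the two formulations interchangeable. One could alternatively establish that $\Null$ is an ideal by applying Werner's criterion directly to $R$, but securing the ideal property of $A\Null$ still appears to require routing through $\Null[R_k]$ and Theorem~\ref{CHS4}, so the chain above is the most economical route.
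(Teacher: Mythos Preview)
Your proposal is correct and follows precisely the route the paper indicates: it combines Proposition~\ref{sumueqcond}, Remark~\ref{unitsum}, and Theorem~\ref{CHS4} in exactly the order you describe, and the paper itself offers no further detail beyond citing these three results. Your additional remark that $\Null$ could be handled directly by Werner's criterion on $R$ while $A\Null$ genuinely requires the detour through $\Null[R_k]$ and Theorem~\ref{CHS4}(2) is a nice clarification not made explicit in the paper.
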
 
		From now on we consider a non-commutative ring $R$ in which $\Null$ and $A\Null$ are ideals of $R[x]$ (equivalently $\Null[R_k]$ is an ideal of $R_k[x]$). However, we remark that the results are still true whenever the arguments of the coming proofs involve only the pointwise addition rather than the multiplication 
		``$\cdot$" defined in Proposition~\ref{prowedef}. In such circumstance we deal with $\PolFun,\PolFun[R_k], \Null, A\Null$ and $\Null[R_k]$ as $R$-left modules. 
		
		Because of Lemma~\ref{CHS31} and the definition of the ideal $A\Null$, we rephrase the first part of 	  Theorem~\ref{CHS4}  in the following corollary.
		\begin{corollary}\label{CHSNulleqc}
			Let $f =f_0 +\sum\limits_{i=1}^{k}f_i \alfa_i$, where $f_0,\dots, f_k \in R[x]$. 
			Then the following statements are equivalent:
			\begin{enumerate}
				\item $f\in \Null[R_k]$ (i.e., $f$ is a null polynomial on $R_k$);
				\item $f_0,f_i\alfa_i\in \Null[R_k]$ for $i=1,\ldots,k$;
				\item $[\lambda_{ f_0}(y,z)]=0$ and $f_i\in \Null$ for $i=0,\ldots,k$.
			\end{enumerate}
		\end{corollary}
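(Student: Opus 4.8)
The plan is to obtain this corollary as a direct paraphrase of Theorem~\ref{CHS4}(1) and Lemma~\ref{CHS31}, unwinding the definition of $A\Null$ so that the three conditions become literal restatements of one another. I would prove the two equivalences $(1)\Leftrightarrow(3)$ and $(2)\Leftrightarrow(3)$ separately, since both rest on the single evaluation formula from Lemma~\ref{polyeval}.

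For $(1)\Leftrightarrow(3)$, I would apply Lemma~\ref{polyeval}(\ref{2ndcal}) to $f=f_0+\sum_{i=1}^k f_i\alfa_i$, giving
\[f(a+\sum_{i=1}^k b_i\alfa_i)=f_0(a)+\sum_{i=1}^k\bigl(\lambda_{f_0}(a,b_i)+f_i(a)\bigr)\alfa_i\]
for all $a,b_1,\ldots,b_k\in R$. Since $\{1,\alfa_1,\ldots,\alfa_k\}$ is an $R$-basis of $R_k$, requiring the left-hand side to vanish on all inputs is equivalent to the simultaneous vanishing of $f_0(a)$ and of each $\lambda_{f_0}(a,b_i)+f_i(a)$. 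Specializing $b_i=0$ and using $\lambda_{f_0}(a,0)=0$ (Fact~\ref{lamdapro}(6)) forces $f_i\in\Null$, after which the remaining condition $\lambda_{f_0}(a,b_i)=0$ for all $a,b_i$ is exactly $[\lambda_{f_0}(y,z)]=0$. Together with $f_0\in\Null$ these are precisely the requirements in $(3)$, the index range $i=0,\ldots,k$ there absorbing $f_0\in\Null$; the reverse implication is immediate upon substituting back into the same formula.

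For $(2)\Leftrightarrow(3)$ I would invoke Lemma~\ref{CHS31} termwise. Its part~(1) gives $f_0\in\Null[R_k]\iff f_0\in A\Null$, which by Definition~\ref{Asnull} means $f_0\in\Null$ and $[\lambda_{f_0}(y,z)]=0$. For the monomial summands I would record the single-index evaluation $(f_i\alfa_i)(a+\sum_{j=1}^k b_j\alfa_j)=f_i(a)\alfa_i$, a special case of Lemma~\ref{polyeval} in which $f_0=0$ and $\lambda_{f_i\alfa_i}$ contributes only to the $\alfa_i$-coefficient; this shows $f_i\alfa_i\in\Null[R_k]\iff f_i\in\Null$ for each fixed $i$. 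Conjoining these equivalences yields exactly condition~$(3)$.

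The argument is essentially bookkeeping, so I expect no serious obstacle; the only point requiring care is that condition~$(2)$ pairs a different coefficient $f_i$ with each $\alfa_i$, whereas Lemma~\ref{CHS31}(2) is phrased for one fixed polynomial tested against all $\alfa_i$ at once. I would therefore make explicit that the statement actually used is the per-index equivalence $f_i\alfa_i\in\Null[R_k]\iff f_i\in\Null$, which the evaluation formula delivers directly since the summand $f_i\alfa_i$ influences only the $\alfa_i$-coordinate of the value.
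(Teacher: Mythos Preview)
Your proposal is correct and follows essentially the same approach as the paper, which simply states that the corollary is a rephrasing of Theorem~\ref{CHS4}(1) via Lemma~\ref{CHS31} and the definition of $A\Null$. You spell out the evaluation-formula argument for $(1)\Leftrightarrow(3)$ explicitly rather than citing Theorem~\ref{CHS4}, and you correctly flag and resolve the per-index subtlety in applying Lemma~\ref{CHS31}(2); the only cosmetic issue is that the notation ``$\lambda_{f_i\alfa_i}$'' is not defined (the assigned polynomial is only defined for elements of $R[x]$), but your intended computation via Lemma~\ref{polyeval}(\ref{2ndcal}) with pure part zero is clear and sound.
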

		Another consequence of Theorem~\ref{CHS4} is the following criterion, which specifies whether two polynomials $f,g\in R_k[x]$ represent the same polynomial function on $R_k$.  
		\begin{corollary}\label{CHS6} \label{CHSGencount}
			
			Let $f =f_0 +\sum\limits_{i=1}^{k}f_i \alfa_i$ and $g=g_0+\sum\limits_{i=1}^{k}g_i \alfa_i $, where 
			$f_0,\ldots, f_k, g_0,\ldots, g_k \in R[x]$.
			
			Then $f \quv g$ on $R_k$ if and only if the following 
			conditions hold:
			\begin{enumerate}
				
				\item
				$[\lambda_{f_0}(y,z)] = [\lambda_{g_0}(y,z)]$;
				\item
				$[f_i]_R= [g_i]_R$ for $i=0,\dots,k$.
			\end{enumerate}
			
			In other words, $f \quv g$ on $ R_k$ if and only if
			the following  congruences hold:
			
			\begin{enumerate}
				
				\item
				$f_0 \equiv g_0  \mod A\Null$;
				\item
				$f_i \equiv g_i  \mod \Null$ for $i=1,\ldots,k$.
			\end{enumerate}
		\end{corollary}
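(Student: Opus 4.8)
The plan is to reduce the claim entirely to the characterization of null polynomials on $R_k$ that has just been assembled, since $f\quv g$ on $R_k$ means precisely that $f-g$ induces the zero function, i.e. $f-g\in\Null[R_k]$. First I would write the difference in the canonical basis as $f-g=(f_0-g_0)+\sum_{i=1}^{k}(f_i-g_i)\alfa_i$, which places the problem in exactly the setting of Corollary~\ref{CHSNulleqc} (the rephrasing of Theorem~\ref{CHS4}(1)) applied to the polynomial $f-g$ with pure part $f_0-g_0$ and $\alfa_i$-coefficients $f_i-g_i$.

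Applying Corollary~\ref{CHSNulleqc} then gives that $f-g\in\Null[R_k]$ holds if and only if $[\lambda_{f_0-g_0}(y,z)]=0$ together with $f_i-g_i\in\Null$ for every $i=0,\ldots,k$. The next step is to translate these two conditions into the asserted ones. Using the additivity of the assigned polynomial in the coefficients of $f$, namely Fact~\ref{lamdapro}(1) with scalars $1$ and $-1$, I obtain $\lambda_{f_0-g_0}=\lambda_{f_0}-\lambda_{g_0}$, so the first condition is equivalent to $[\lambda_{f_0}(y,z)]=[\lambda_{g_0}(y,z)]$. For the second condition I would invoke Remark~\ref{eqrelation}: the relation $\quv$ on $R$ is exactly congruence modulo $\Null$, so $f_i-g_i\in\Null$ is equivalent to $[f_i]_R=[g_i]_R$. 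This establishes the first list of equivalent conditions.

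Finally, for the restatement in terms of congruences I would recall the definition $A\Null=\{h\in\Null\mid[\lambda_h(y,z)]=0\}$ from Definition~\ref{Asnull}. The point is that the membership $f_0-g_0\in A\Null$ simultaneously encodes $f_0-g_0\in\Null$ (the $i=0$ instance of the coefficient condition) and $[\lambda_{f_0}]=[\lambda_{g_0}]$ (the assigned-polynomial condition); combined with $f_i-g_i\in\Null$ for $i=1,\ldots,k$ this recovers precisely the congruence form. Since every step above is a logical equivalence, there is no real obstacle to overcome; the only point demanding care is the bookkeeping in this last paragraph, ensuring that the $i=0$ term $f_0-g_0\in\Null$ is correctly absorbed into the stronger statement $f_0-g_0\in A\Null$ so that it need not appear separately among the congruences $f_i\equiv g_i\bmod\Null$ for $i\geq 1$.
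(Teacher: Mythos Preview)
Your proposal is correct and follows essentially the same approach as the paper: reduce to the difference $f-g$ (the paper uses $g-f$, which is immaterial) and invoke the null-polynomial characterization from Theorem~\ref{CHS4}/Corollary~\ref{CHSNulleqc}. The paper's own proof is a single sentence to this effect; your version simply spells out the translation via Fact~\ref{lamdapro} and the definition of $A\Null$ in more detail.
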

		\begin{proof}
			It is enough to consider the polynomial $h=g-f$ and notice that $g \quv f$ on $R_k$ 
			if and only if $h \in \Null[R_k]$. 
		\end{proof}
		Concerning $A\Null[R]$ and $\Null[R]$ as subgroups of the additive group $R[x]$ gives the following  straightforward result of the Third Isomorphism Theorem of groups. 
		\begin{lemma}\label{CHS2ndisoapli}
			Let $R$ be a finite ring. Then $[R[x]\colon A\Null[R]]=[R[x] \colon \Null[R]][\Null[R] \colon A\Null[R]]$.
		\end{lemma}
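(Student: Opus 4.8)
The final statement to prove is Lemma~\ref{CHS2ndisoapli}, asserting the index factorization
\[
[R[x]\colon A\Null[R]]=[R[x] \colon \Null[R]]\,[\Null[R] \colon A\Null[R]].
\]
The plan is to treat everything as a tower of finite abelian groups under addition. By Remark~\ref{nulremark} we have the chain of additive subgroups $A\Null[R]\subseteq \Null[R]\subseteq R[x]$, so the three indices appearing are all well defined as cardinalities of quotient groups (finite because $R$ is finite and these are finite-index subgroups in the relevant sense for the equivalence classes). The claimed identity is then nothing more than the multiplicativity of indices in a tower, which is exactly the Third Isomorphism Theorem for groups applied to this chain.

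\emph{First} I would record the inclusions: since every $f\in A\Null[R]$ lies in $\Null[R]$, and $\Null[R]\subseteq R[x]$, all three are subgroups of the additive group $(R[x],+)$ with $A\Null[R]$ a subgroup of $\Null[R]$ and $\Null[R]$ a subgroup of $R[x]$. \emph{Next} I would invoke the Third Isomorphism Theorem, which gives the group isomorphism
\[
\bigl(R[x]/A\Null[R]\bigr)\big/\bigl(\Null[R]/A\Null[R]\bigr)\;\cong\; R[x]/\Null[R].
\]
Taking cardinalities of both sides and using Lagrange's theorem (the order of a quotient equals the quotient of orders) immediately yields
\[
[R[x]\colon A\Null[R]] \;=\; [R[x]\colon \Null[R]]\,[\Null[R]\colon A\Null[R]],
\]
which is the assertion.

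There is essentially no obstacle here, which is presumably why the statement is labelled a lemma and flagged as a ``straightforward result of the Third Isomorphism Theorem.'' The only points requiring a sentence of care are that one is working purely additively — the multiplicativity statement does not need the multiplicative structure ``$\cdot$'' of Proposition~\ref{prowedef} at all, so the standing assumption that $\Null$ and $A\Null$ are two-sided ideals is irrelevant here and I would not use it — and that all indices are finite, which follows from $R$ being finite together with the fact (Remark~\ref{eqrelation}) that these indices count polynomial functions and hence are bounded. With those remarks in place the proof is a two-line application of the tower/Third Isomorphism Theorem, so I would keep it terse.
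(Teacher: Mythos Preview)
Your proposal is correct and matches the paper's approach exactly: the paper states the lemma as a ``straightforward result of the Third Isomorphism Theorem of groups'' applied to $A\Null[R]\subseteq \Null[R]\subseteq R[x]$ viewed as additive subgroups, and gives no further proof. Your write-up simply spells out this two-line argument, so there is nothing to add.
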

		Recall from Definition~\ref{CHSequvfun} that $\PolFun[R_k]$ stands for the set of polynomial functions on $R_k$. In the following proposition, we find a counting formula for $\PolFun[R_k]$ in terms of the indices of the ideals $A\Null, \Null$.  The proof is quite similar to that of \cite[Proposition~3.7]{polysev} with the difference that here we replace  the ideal $\Nulld$ with the ideal $A\Null$.
		\begin{proposition} \label{CHSfirstcountfor}
			The number of polynomial functions on $R_k$ is given by
			\[|\PolFun[R_k]|= \big[R[x]\colon A\Null \big]\big[R[x]\colon \Null\big]^k=[\Null[R] \colon A\Null[R]]|\PolFun|^{k+1}.\]
			
		\end{proposition}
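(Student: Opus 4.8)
The plan is to count $\PolFun[R_k]$ by attaching to each polynomial function on $R_k$ an invariant tuple of residue classes over $R[x]$, showing that this assignment is a bijection by means of the equivalence criterion of Corollary~\ref{CHS6}, and then reducing the resulting product of indices to the stated form via the tower law recorded in Lemma~\ref{CHS2ndisoapli}. Since $R_k$ is finite, $\PolFun[R_k]$ is finite, and by Remark~\ref{eqrelation} its elements correspond bijectively to the equivalence classes of $R_k[x]$ under $\quv$ on $R_k$; it therefore suffices to enumerate those classes.

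First I would invoke the unique representation $f = f_0 + \sum_{i=1}^{k} f_i \alfa_i$ with $f_0,\ldots,f_k \in R[x]$ available to every $f \in R_k[x]$. Corollary~\ref{CHS6} states precisely that two polynomials $f = f_0 + \sum_{i=1}^{k} f_i\alfa_i$ and $g = g_0 + \sum_{i=1}^{k} g_i \alfa_i$ satisfy $f \quv g$ on $R_k$ if and only if $f_0 \equiv g_0 \mod A\Null$ and $f_i \equiv g_i \mod \Null$ for $i = 1,\ldots,k$. In other words, the $\quv$-class of $f$ on $R_k$ is completely and independently determined by the tuple $(f_0 + A\Null,\, f_1 + \Null,\, \ldots,\, f_k + \Null)$.

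Next I would package this as an explicit bijection. Define $\Phi \colon \PolFun[R_k] \longrightarrow (R[x]/A\Null) \times (R[x]/\Null)^{k}$ by sending a function represented by $f = f_0 + \sum_{i=1}^{k} f_i \alfa_i$ to $(f_0 + A\Null,\, f_1 + \Null,\, \ldots,\, f_k + \Null)$. Corollary~\ref{CHS6} shows at once that $\Phi$ is well defined (its value is independent of the chosen representing polynomial) and injective (inequivalent functions have distinct images), while surjectivity is immediate, since any prescribed $f_0,\ldots,f_k \in R[x]$ reassemble into $f_0 + \sum_{i=1}^{k} f_i\alfa_i \in R_k[x]$ realizing the given tuple. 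As $R$ is finite all the quotients involved are finite, so comparing cardinalities through the bijection $\Phi$ yields $|\PolFun[R_k]| = [R[x]\colon A\Null]\,[R[x]\colon\Null]^{k}$.

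Finally I would rewrite this in the stated form. The tower law for indices of additive subgroups, recorded in Lemma~\ref{CHS2ndisoapli}, gives $[R[x]\colon A\Null] = [R[x]\colon\Null]\,[\Null\colon A\Null]$, while Remark~\ref{eqrelation} applied to $R$ gives $|\PolFun| = [R[x]\colon\Null]$. Substituting,
$$|\PolFun[R_k]| = [R[x]\colon A\Null]\,[R[x]\colon\Null]^{k} = [\Null\colon A\Null]\,[R[x]\colon\Null]^{k+1} = [\Null\colon A\Null]\,|\PolFun|^{k+1}.$$
There is no genuinely difficult step here: the substantive content—that the pure part $f_0$ must be measured modulo $A\Null$ whereas each $\alfa_i$-component is measured only modulo $\Null$—is exactly what Corollary~\ref{CHS6} supplies, so the only point requiring care is verifying that the invariant tuple is both complete and independent, guaranteeing that $\Phi$ is a bijection; after that the result drops out of the elementary index computation.
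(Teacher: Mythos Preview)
Your proof is correct and follows essentially the same approach as the paper. The paper phrases it as an additive group epimorphism $\psi\colon \bigoplus_{i=0}^{k} R[x] \to \PolFun[R_k]$ with kernel $A\Null \times \bigoplus_{i=1}^{k}\Null$ (invoking Theorem~\ref{CHS4}), whereas you build the inverse bijection $\Phi$ directly from Corollary~\ref{CHS6}; both arguments then finish identically via Lemma~\ref{CHS2ndisoapli} and Remark~\ref{eqrelation}.
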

		
		\begin{proof}
			
			Let  $f =f_0 +\sum\limits_{i=1}^{k}f_i \alfa_i$ and $g=g_0+\sum\limits_{i=1}^{k}g_i \alfa_i $
			where $f_0,\ldots ,f_k,g_0,\ldots ,g_k \in R[x]$.  Then by Corollary~\ref{CHS6},
			$[f]_{R_k} = [g]_{R_k}$  if and only if $f_0 \equiv g_0  \mod A\Null$
			and $f_i \equiv g_i  \mod \Null$ for $i=1,\ldots,k$.
			
			Define $\psi\colon \bigoplus\limits_{i=0}^{k} R[x] \longrightarrow \mathcal{F}(\Ralfak)$ 
			by $\psi(f_0,\ldots,f_k)= [f]_{R_k}$, where   $f =f_0 +\sum\limits_{i=1}^{k}f_i \alfa_i$. 
			Then $\psi$ is a group epimorphism of additive groups with  $\ker\psi=A\Null\times\bigoplus\limits_{i=1}^{k}  \Null$ by Theorem~\ref{CHS4}. 
			Therefore, \[|\mathcal{F}({R_k})|=
			[\bigoplus\limits_{i=0}^{k} R[x]\colon A\Null\times \bigoplus\limits_{i=1}^{k}  \Null]=
			[R[x]\colon A\Null][R[x]\colon \Null]^k.\]
			The other equality follows by Lemma~\ref{CHS2ndisoapli} and Remark~\ref{eqrelation}.
		\end{proof}
		\begin{corollary}\label{indecon}
			
			Let $F\in \PolFun$ be fixed. 
			\[[\Null[R] \colon A\Null[R]]=|\{[\lambda_{ f}(y,z)]\mid f\in R[x] \text{ such that } [f]_R=F \}|.\]
		\end{corollary}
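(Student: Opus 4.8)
The plan is to recognize the indexing set $\{f\in R[x]\mid [f]_R=F\}$ as a single additive coset of $\Null$, and then to push the counting problem through the additivity of the assignment $f\mapsto \lambda_f$. Fix one polynomial $f_0\in R[x]$ with $[f_0]_R=F$; such an $f_0$ exists because $F\in\PolFun$. By Remark~\ref{eqrelation}, a polynomial $f$ satisfies $[f]_R=F$ precisely when $f-f_0\in\Null$, so the set over which we are counting is exactly the coset $f_0+\Null$.

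Next I would invoke Fact~\ref{lamdapro}(1) with $r=s=1$, which yields $\lambda_{f+g}=\lambda_f+\lambda_g$. Consequently the rule $\Phi\colon f\mapsto [\lambda_f(y,z)]$ (the induced function $R\times R\to R$ of Definition~\ref{lmadfunction}) is a homomorphism from the additive group $R[x]$ to the additive group of all functions $R\times R\to R$ under pointwise addition. For $f=f_0+h$ with $h\in\Null$ we then have $[\lambda_f(y,z)]=[\lambda_{f_0}(y,z)]+[\lambda_h(y,z)]$. Thus, as $f$ ranges over the coset $f_0+\Null$, the functions $[\lambda_f(y,z)]$ are exactly the translates by the fixed element $[\lambda_{f_0}(y,z)]$ of the functions $[\lambda_h(y,z)]$ with $h\in\Null$. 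Since translation by a fixed element is a bijection, the number of distinct $[\lambda_f(y,z)]$ equals the number of distinct $[\lambda_h(y,z)]$ with $h\in\Null$.

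Finally I would restrict $\Phi$ to $\Null$. Its image is a subgroup whose cardinality is the index $[\Null\colon \ker(\Phi\vert_{\Null})]$. The kernel consists of those $h\in\Null$ with $[\lambda_h(y,z)]=0$, which is precisely $A\Null$ by Definition~\ref{Asnull}. Hence the number of distinct functions $[\lambda_h(y,z)]$, and therefore of $[\lambda_f(y,z)]$, equals $[\Null\colon A\Null]$, giving the asserted equality.

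The argument is essentially a coset-counting application of the homomorphism theorem, so the only point requiring care — and it is a minor one — is reading off the kernel correctly: a polynomial $h$ lies in $\ker(\Phi\vert_{\Null})$ exactly when $h$ is null on $R$ \emph{and} its assigned polynomial $\lambda_h$ is null, which is verbatim the defining condition of $A\Null$ in Definition~\ref{Asnull}. Once this identification is in place, the conclusion is immediate and independent of the choice of $f_0$, as it must be since the left-hand side $[\Null\colon A\Null]$ does not depend on $F$.
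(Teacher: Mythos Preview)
Your proof is correct and more direct than the paper's. The paper argues indirectly through the ring $R_k$: it introduces the set $\mathcal{B}$ of polynomial functions on $R_k$ whose pure part induces $F$, shows $|\mathcal{B}|=|\mathcal{A}|\cdot|\PolFun|^k$ via Corollary~\ref{CHSGencount}, and then identifies $\mathcal{B}$ as a fiber of the epimorphism $\PolFun[R_k]\to\PolFun$, $[g_0+\sum g_i\alfa_i]\mapsto[g_0]_R$, whose kernel size $[\Null:A\Null]\cdot|\PolFun|^k$ comes from Proposition~\ref{CHSfirstcountfor}; cancelling $|\PolFun|^k$ gives the result. Your argument bypasses $R_k$ entirely, working only with the additive coset $f_0+\Null$ in $R[x]$ and the homomorphism $f\mapsto[\lambda_f(y,z)]$, which is both shorter and conceptually cleaner. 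In fact the paper itself employs exactly your restriction-of-$\Phi$-to-$\Null$ argument later, in the proof of Proposition~\ref{CHS12}(\ref{CHSfourthstab}), so your route is fully consistent with the paper's toolkit. What the paper's detour buys is an explicit link between this index and the fiber structure of $\PolFun[R_k]$ over $\PolFun$, but for the corollary as stated your approach is preferable.
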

		\begin{proof}
			Fix  an arbitrary $F\in \PolFun$. Then
			set $$\mathcal{A}=\{[\lambda_{ f}(y,z)]\mid f\in R[x] \text{ such that } [f]_R=F \},$$ 
			and set $$\mathcal{B}=\{G\in \PolFun[R_k]\mid G=[g_0+\sum\limits_{i=1}^{k}g_i \alfa_i]_{R_k} \text { with }   [g_0]_R=F, \text{ where } g_0,g_1,\ldots,g_k\in R[x]\}.$$
					Then, we claim that $|\mathcal{B}|=|\mathcal{A}|\cdot |\PolFun|^k$. To see this,
			let $G\in \PolFun[R_k]$ be induced by $g=g_0+\sum\limits_{i=1}^{k}g_i \alfa_i$ with $[g]_R=F$, where $g_0,g_1,\ldots,g_k\in R[x]$. Then, by Corollary~\ref{CHSGencount}, the number of such different $G$ corresponds  to the number of different $(k+1)$-tuples $([\lambda_{ g_0}(y,z)],[g_1]_R,\ldots,[g_k]_R)$ with the condition $[g_0]_R=F$. But this condition implies that $[\lambda_{ g_0}(y,z)]$ can be chosen in $|\mathcal{A}|$ ways, whence    the claim follows. 
			
			Now,  we	define a map $\phi\colon \PolFun[R_k] \rightarrow \PolFun$  by letting $\phi (G) =[g_0]_R$. By Corollary~\ref{CHSGencount},
			$\phi$ is well defined. Also,  it is an epimorphism of (additive) groups. Thus, by Proposition~\ref{CHSfirstcountfor},
			\[|\ker \phi| =[\Null[R] \colon A\Null[R]]\cdot |\PolFun|^k. \]
			Thus, $|\phi^{-1} (F)|=[\Null[R] \colon A\Null[R]]\cdot |\PolFun|^k$. Evidently, $\phi^{-1} (F)$ is the set $\mathcal{B}$. Hence, 
			$$[\Null[R] \colon A\Null[R]]\cdot |\PolFun|^k=|\phi^{-1} (F)|=|\mathcal{B}|=|\mathcal{A}|\cdot |\PolFun|^k.$$
			Therefore, $|\mathcal{A}|=[\Null[R] \colon A\Null[R]] $.
		\end{proof}
		\begin{remark}\label{CHSexistmonicn}
			\leavevmode
			\begin{enumerate}
				\item The division algorithm of polynomials over non-commutative rings can be done from two sides, that is, from the left and the right. For, if $f,g\in R[x]$ such that $g$ has a unit leading coefficient, then there exist unique $q,q_1,r,r_1$
				such that $f=q g +r$ with $\deg r< \deg g$ or $r=0$; and $f=gq_1+r_1$   
				with $\deg r_1< \deg g$ or $r_1=0$. We will adopt the first one which is the division from the right.
				\item For any finite non-commutative ring $A$  monic null polynomials not only exist but can be also chosen to be in the center of $A[x]$.   Indeed, if $r\in A$, then there exist two positive integers $n_1(r)>n_2(r)$ such that $r^{n_1(r)}=r^{n_2(r)}$. So, $\prod\limits_{r\in A}(x^{n_1(r)}-x^{n_2(r)})$ is a  monic null polynomial which is in the center of $R[x]$. (see for example~\cite[Proposition~2.2]{Wernerki}). In  particular, if $A$ is our ring $R_k$, then the   monic null polynomial on $R_k$, $\prod\limits_{r\in R_k}(x^{n_1(r)}-x^{n_2(r)})$, is not only in the center of $R_k[x]$ but also admits 
			coefficients in $R$. Furthermore, we can take the least common multiple 
				of the set of polynomials of the form  $(x^{n_1(r)}-x^{n_2(r)})$,  where $r\in R_k$, to get 
				a   monic polynomial of degree $\le \deg \prod\limits_{r\in R_k}(x^{n_1(r)}-x^{n_2(r)})$, which is again in the center.	
			\end{enumerate}
		\end{remark}
		\begin{example}\label{nulldnotAnull}
			Let $R= UT_2(\mathbb{F}_2)$ (the ring of  all $2\times 2$ upper triangular matrices with entries in $\mathbb{F}_2$). Then every $a\in \{ 0, 1,\begin{bmatrix}
				1 & 0 \\
				0 & 0 
			\end{bmatrix}, \begin{bmatrix}
				0 & 0 \\
				0 & 1 
			\end{bmatrix},\begin{bmatrix}
				1 & 1 \\
				0 & 0 
			\end{bmatrix},\begin{bmatrix}
				0 & 1 \\
				0 & 1 
			\end{bmatrix}\}$ is a root of the   polynomial $x^2-x$, and every
			$a\in \{ 
			\begin{bmatrix}
				1 & 1 \\
				0 & 1 
			\end{bmatrix},\begin{bmatrix}
				0 & 1 \\
				0 & 0 
			\end{bmatrix}\}$ is a root of the   polynomial $x^4-x^2$. Thus, $h(x)=lcm(x^2-x, x^4-x^2)=x^4-x^2$ is a   
			null polynomial on $R$, which is in the center of $R[x]$. Furthermore, $h'(x)=4x^3-2x=0\in \Null$. Hence, $h\in \Nulld$. However, by Lemma~\ref{CHS31}, $h\notin A\Null$ since   
			\begin{align*}
				h(\begin{bmatrix}
					1 & 1 \\
					0 & 1 
				\end{bmatrix}+\begin{bmatrix}
					0 & 1 \\
					0 & 1 
				\end{bmatrix}\alfa) &=(\begin{bmatrix}
					1 & 1 \\
					0 & 1 
				\end{bmatrix}+\begin{bmatrix}
					0 & 1 \\
					0 & 1 
				\end{bmatrix}\alfa)^4-(\begin{bmatrix}
					1 & 1 \\
					0 & 1 
				\end{bmatrix}+\begin{bmatrix}
					0 & 1 \\
					0 & 1 
				\end{bmatrix}\alfa)^2\\
				&=\begin{bmatrix}
					1 & 0 \\
					0 & 1 
				\end{bmatrix}-(\begin{bmatrix}
					1 & 0 \\
					0 & 1 
				\end{bmatrix}+\begin{bmatrix}
					0 & 1 \\
					0 & 0 
				\end{bmatrix}\alfa)=\begin{bmatrix}
					0 & 1 \\
					0 & 0 
				\end{bmatrix}\alfa\ne 0.
			\end{align*}
		\end{example}
		The following proposition shows that to  construct a  null polynomial on $R_k$  in the center of $R_k[x]$, it suffices to know a   null polynomial on $R$ in the center of $R[x]$.
		
		\begin{proposition}
			Let $R$ be a finite non-commutative ring and let $f\in R[x]$ be a  null polynomial on $R$   in the center of $R[x]$. Then $f^2$ is a   null polynomial on $R_k$ in the center of $R_k[x]$ for every $k\ge1$.
		\end{proposition}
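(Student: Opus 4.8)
The plan is to establish the two assertions---that $f^2$ is central in $R_k[x]$ and that it is null on $R_k$---by using centrality to convert the substitution of a product into the product of substitutions, which is the only point at which non-commutativity could cause trouble.

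First I would record that $f$, viewed in $R_k[x]$, remains central. Indeed, $f$ being central in $R[x]$ forces all its coefficients to lie in $C(R)$, and by Proposition~\ref{dualpro}(4) we have $C(R)\subseteq C(R_k)$; hence $f$ is central in $R_k[x]$, and therefore so is $f^2$. This settles the centrality claim and, more importantly, licenses the product rule recorded after Definition~\ref{noncprod}: since the second factor $f$ is central, $(f\cdot f)(c)=f(c)f(c)=f(c)^2$ for every $c\in R_k$.

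Next I would evaluate $f$ at a general element $c=a+\sum_{i=1}^{k}b_i\alfa_i$ of $R_k$ using Lemma~\ref{polyeval}(1):
\[
f(c)=f(a)+\sum_{i=1}^{k}\lambda_f(a,b_i)\alfa_i.
\]
Because $f$ is null on $R$ we have $f(a)=0$, so $f(c)=\sum_{i=1}^{k}\lambda_f(a,b_i)\alfa_i$ has vanishing constant coefficient. Squaring and using that each $\alfa_i$ is central with $\alfa_i\alfa_j=0$ (equivalently, applying the multiplication formula of Proposition~\ref{dualpro}(1)(a) with zero constant parts) gives $f(c)^2=0$. Combined with the previous paragraph this yields $f^2(c)=f(c)^2=0$ for all $c\in R_k$, i.e. $f^2$ is null on $R_k$. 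Alternatively, one may phrase nullity through Corollary~\ref{CHSNulleqc}: since $f^2$ coincides with its own pure part, it suffices to check $f^2\in\Null$ (immediate from $f(a)^2=0$) together with $[\lambda_{f^2}(y,z)]=0$, the latter being precisely the $\alfa_i$-coefficient computation above.

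I do not anticipate a serious obstacle; the only step requiring care is the passage from $R[x]$ to $R_k[x]$, where substitution is not a ring homomorphism. The whole argument hinges on the observation that centrality of $f$ survives this passage (via $C(R)\subseteq C(R_k)$), so that the substitution-of-a-product identity $f^2(c)=f(c)^2$ remains valid; once this is in hand, the fact that $f(c)$ lands in the square-zero part $\sum_{i=1}^{k}R\alfa_i$ makes the conclusion immediate.
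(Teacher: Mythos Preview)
Your proof is correct and follows essentially the same route as the paper's: both arguments use $C(R)\subseteq C(R_k)$ (Proposition~\ref{dualpro}(4)) to keep $f$ central in $R_k[x]$, invoke centrality to obtain $(f^2)(c)=f(c)^2$, apply Lemma~\ref{polyeval} together with $f\in\Null$ to see that $f(c)$ has zero constant coefficient, and conclude via $\alfa_i\alfa_j=0$. The only cosmetic difference is that you mention the alternative packaging through Corollary~\ref{CHSNulleqc}, which the paper does not.
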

		\begin{proof}
			Since $f$ is a  null polynomial on $R$ lying in the center of $R[x]$,	we can write $f=\sum\limits_{i=1}^{n}a_ix^i$ for some $a_1,\ldots,a_n\in C(R)$.
			Also, $f$ is  in the center of the polynomial ring $R_k[x]$ by Proposition~\ref{dualpro}. Thus, $f^2$ is in the center of   $R_k[x]$.
			Let $r\in R_k $ and  put $h=f^2$.   Then, by Definition~\ref {noncprod}  since $f$ is in the center,
			\[h(r)=\sum\limits_{i=1}^{n}a_if(r)r^i=f(r)f(r)=(f(r))^2.\]
			Now, write $r=a+\sum\limits_{i=1}^{k}b_i\alfa_i$, where 
			$a,b_1,\ldots,b_k\in R$. Then, by Lemma~\ref{polyeval}  and the fact that $f$ is a null polynomial on $R$,
			\[
			h(r) =	(f(a+\sum\limits_{i=1}^{k}b_i\alfa_i))^2=
			(f(a)+\sum\limits_{i=1}^{k}\lambda_f(a,b_i)  \alfa_i )^2=( \sum\limits_{i=1}^{k}\lambda_f(a,b_i)  \alfa_i)^2=0. 
			\]
			Therefore, $h$ is a null polynomial on $R_k$  for every $k\ge 1$.
		\end{proof}
		In the following proposition, we obtain an upper bound for the minimal degree of a representative of a polynomial function on $R_k$.  
		\begin{proposition}\label{CHSsur}
			Let $R$ be a finite non-commutative ring.	Let $h_1\in {R_k}{[x]}$ and $ h_2 \in R[x]$ be  monic null polynomials on $R_k$ and $R$, respectively, such that $\deg h_1= d_1$ 
			and  $\deg h_2=d_2$.
			
			Then every polynomial function 
			$F\colon R_k\longrightarrow R_k$ 
			is represented by a polynomial  $f =f_0 +\sum\limits_{i=1}^{k}f_i\alfa_i$, where $f_0,\ldots,f_k \in R[x]$ 
			such that $\deg f_0 <d_1$ and  $\deg   f_i < d_2$ for $i=1,\ldots,k$. 
			
			Moreover, if  $F$ is represented by a polynomial $f\in R[x]$ and 
			$h_1\in R[x]$ (rather than in ${R_k}[x]$), then there 
			exists a polynomial $g\in R[x]$ with $\deg g<d_1$, such that 
			$g\quv f$  on $R$ and $[\lambda_{ g}(y,z)]=[\lambda_{ f}(y,z)]$.
		\end{proposition}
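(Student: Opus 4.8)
The plan is to produce the required representative in two reduction stages, using right division by the two monic null polynomials, and then to read off the degree bounds component by component; the \emph{moreover} part is obtained by performing the single division entirely over $R[x]$ and invoking Lemma~\ref{CHS31}(1).

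For the first assertion I would start from an arbitrary $p\in R_k[x]$ with $[p]_{R_k}=F$. Since $h_1$ is monic, hence has a unit leading coefficient, the right division algorithm (Remark~\ref{CHSexistmonicn}) gives $p=q h_1+r$ with $q,r\in R_k[x]$ and $\deg r<d_1$. Because $h_1\quv 0$ on $R_k$ and $\Null[R_k]$ is a left ideal of $R_k[x]$, the product $q h_1$ lies in $\Null[R_k]$, so $[r]_{R_k}=[p]_{R_k}=F$. Writing $r=r_0+\sum_{i=1}^{k}r_i\alfa_i$ with $r_0,\ldots,r_k\in R[x]$, the inequality $\deg r<d_1$ forces $\deg r_i<d_1$ for every $i$; in particular $\deg r_0<d_1$, which is already the bound required on the pure part.

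It remains to shrink the non-pure components. For each $i\ge 1$, right-divide $r_i$ by $h_2$ inside $R[x]$, so $r_i=q_i h_2+s_i$ with $\deg s_i<d_2$. Since $h_2\quv 0$ on $R$ and $\Null$ is a left ideal of $R[x]$, we get $q_i h_2\in\Null$, whence $[s_i]_R=[r_i]_R$. Putting $f_0=r_0$ and $f_i=s_i$ for $i\ge 1$, Corollary~\ref{CHS6} shows that $f=f_0+\sum_{i=1}^{k}f_i\alfa_i\quv r$ on $R_k$: condition~(1) holds trivially because the pure parts coincide, and condition~(2) holds because $[f_i]_R=[r_i]_R$ for all $i$ (an equality for $i=0$, the congruence just established for $i\ge 1$). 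Hence $[f]_{R_k}=F$, and by construction $\deg f_0<d_1$ and $\deg f_i<d_2$ for $i\ge 1$.

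For the \emph{moreover} part, assume $F=[f]_{R_k}$ with $f\in R[x]$ and $h_1\in R[x]$. Now the right division can be carried out inside $R[x]$, giving $f=q h_1+g$ with $q,g\in R[x]$ and $\deg g<d_1$. As before $q h_1\in\Null[R_k]$, so $f-g\in\Null[R_k]$; but $f-g\in R[x]$, so Lemma~\ref{CHS31}(1) yields $f-g\in A\Null$. By the very definition of $A\Null$ this means simultaneously $f-g\in\Null$, i.e.\ $g\quv f$ on $R$, and $[\lambda_{f-g}(y,z)]=0$, which by additivity of the assignment $f\mapsto\lambda_f$ (Fact~\ref{lamdapro}(1)) gives $[\lambda_{g}(y,z)]=[\lambda_{f}(y,z)]$, as claimed.

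The only delicate points are bookkeeping ones. One must divide on the correct (right) side, matching the right-substitution convention, so that the quotient multiplies the null polynomial on the left and the left-ideal property of $\Null$ and $\Null[R_k]$ applies; for this reason no two-sided ideal hypothesis is needed here. The other key step is the final use of Lemma~\ref{CHS31}(1), which converts the statement ``null on $R_k$ and defined over $R$'' into membership in $A\Null$; this is exactly what couples the two conclusions $g\quv f$ on $R$ and $[\lambda_{g}(y,z)]=[\lambda_{f}(y,z)]$ into a single degree-reduced representative.
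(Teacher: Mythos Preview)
Your proof is correct and follows essentially the same approach as the paper: right-divide by $h_1$ to bound the degree of the whole polynomial, then right-divide each non-pure component by $h_2$, invoking Corollary~\ref{CHS6} to conclude. For the \emph{moreover} part, the paper obtains $[f]_{R_k}=[g]_{R_k}$ and then cites Corollary~\ref{CHSGencount} directly, whereas you route through Lemma~\ref{CHS31}(1) to land in $A\Null$ and unpack the definition; these are equivalent, and your version makes the use of only the left-ideal property more explicit.
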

		
		\begin{proof} Suppose that $h_1\in {R_k}[x]$ is a monic null polynomial
			on $R_k$ of degree $d_1$. Let $g \in {R_k}[x]$ be a polynomial 
			representing $F$. 
			By the division algorithm,  we have  $g =qh_1+r$ 
			for some $r,q \in R_k[x]$, where $\deg r \le d_1 -1$. Then, since $qh_1\in \Null[R_k]$,
			\begin{equation}\label{subtrighdiv}g(a)=(qh_1)(a)+r(a) =r(a) \text{ for every } a\in R_k.\end{equation}
			Thus, $r(x)$ represents $F$. Then, 
			$r=f_0+\sum\limits_{i=1}^{k}r_i\alfa_i$ 
			for some $f_0,r_1,\ldots,r_k\in R[x]$, and it is obvious that 
			$\deg f_0,\deg r_i \le d_1-1$ for $i=1,\dots,k$. 
			Now let $h_2\in \Null$ be a monic polynomial  of degree $d_2$.  
			Again,  by the division algorithm, we have for $i=1,\ldots,k$, $r_i =q_ih_2+f_i$ 
			for some $f_i ,q_i \in R[x]$, where $\deg f_i \le d_2 -1$.
			Then  by Corollary~\ref{CHS6},  $r_i\alfa_i \quv f_i\alfa_i$ on $R_k$. 
			Thus $f =f_0 +\sum\limits_{i=1}^{k}f_i \alfa_i$ is the desired polynomial. \\
			For the moreover part, the existence of $g\in R[x]$ with $\deg g<d_1$ such that
			$[f]_{R_k} = [g]_{R_k} $   follows by the same argument given in the previous part.
			By Corollary~\ref{CHSGencount},  $[\lambda_{ g}(y,z)]=[\lambda_{ f}(y,z)]$ and $[g]_R=[f]_R$.     
		\end{proof}
		\begin{remark}
			We here mention that	Proposition~\ref{CHSsur} is true in the general case. Indeed, in the proof of Proposition~\ref{CHSsur},  Equation~(\ref{subtrighdiv}) is satisfied as well when $\Null[R_k]$ is just a left ideal.
			Also, if we want to argue concerning the left division argument for the same polynomial $g$ and the monic null polynomial $h_1$, the same argument works provided that $\Null[R_k]$ is an ideal. However, if we suppose that $\Null[R_k]$ is only a left ideal but not a right ideal, then we will face some difficulties. Let us illustrate this and suppose that by the left division algorithm, we have that $g  =h_1 q_1 +r_1 $. So,
			\begin{equation*} g(a)=(h_1q_1)(a)+r_1(a)  \text{ for every } a\in R_k.\end{equation*} Hence, we cannot infer
			that in this case $g(a) =r_1(a)$ for each $a$, unless we have $(h_1q_1)(a)=0$ for each $a\in R_k$. Nevertheless, we can solve this obstacle by requiring that the monic null polynomials in that proposition to be in the center of $R_k[x]$, which they exist by Remark~\ref{CHSexistmonicn}. Note that in the cases previously discussed,  we only consider substituting the variable from the right.
		\end{remark}
		
		\section{ Permutation  polynomials on  $R_k$}\label{Sec04}
		Given a polynomial $f=f_0+\sum\limits_{i=1}^{k}f_i\alfa_i$, we see in this section that determining whether $f$ is a permutation polynomial 
		on $R_k$ depends completely  on the pure part $f_0$ of $f$ and its assigned polynomial $\lambda_{f_0} $.  
	
		\begin{definition}\label{locper}
			We call the function $G\colon R\times R \longrightarrow R$ a local permutation in the second coordinate, if for every $a\in R$ the function $G_a\colon R \longrightarrow R$, $r\rightarrow G(a,r)$, is bijective.
		\end{definition}
		
		When we deal with the function $[\lambda_{ f}(y,z)]$, for some  $f\in R[x]$,  we use the term   `` variable  $z$ '' instead of ``  second coordinate''.
		\begin{lemma}\label{perpurcof}
			Let  $R$ be a finite ring and   
			   let $f_0,\ldots,f_k \in R[x]$ be arbitrary. Suppose that $f_0+\sum\limits_{i=1}^{k}f_i\alfa_i$ is a permutation polynomial on $R_k$. Then  $f_0$ is a permutation polynomial  on $R$  and  $[\lambda_{f_0}(y,z)] $ is a local permutation in the variable $z$.    
		\end{lemma}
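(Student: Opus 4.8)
The plan is to read everything off from the evaluation formula of Lemma~\ref{polyeval}(\ref{2ndcal}): writing $f=f_0+\sum_{i=1}^{k}f_i\alfa_i$, for all $a,b_1,\dots,b_k\in R$ one has
\[
f\Big(a+\sum_{i=1}^{k}b_i\alfa_i\Big)=f_0(a)+\sum_{i=1}^{k}\big(\lambda_{f_0}(a,b_i)+f_i(a)\big)\alfa_i .
\]
Since $R_k$ is finite, $f$ being a permutation polynomial means exactly that the induced map is surjective; likewise it suffices to prove that $f_0$ and each map $z\mapsto\lambda_{f_0}(a,z)$ are surjective in order to conclude that they are bijective. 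Both assertions are then obtained by choosing suitable target elements of $R_k$ and comparing coefficients with respect to the basis $\{1,\alfa_1,\dots,\alfa_k\}$.

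First I would prove that $f_0$ permutes $R$. Fix $c\in R$ and view it as the element $c\in R\subseteq R_k$. By surjectivity of $f$ there are $a,b_1,\dots,b_k\in R$ with $f(a+\sum_{i=1}^{k}b_i\alfa_i)=c$; comparing the constant coefficients in the displayed formula gives $f_0(a)=c$. Hence $f_0$ is surjective on $R$, and therefore bijective because $R$ is finite.

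The core step is the statement about $\lambda_{f_0}$. Fix $a\in R$; I must show that $z\mapsto\lambda_{f_0}(a,z)$ is onto. Given $c\in R$, I apply surjectivity of $f$ to the target $w=f_0(a)+(c+f_1(a))\alfa_1\in R_k$, obtaining $a',b_1,\dots,b_k\in R$ with $f(a'+\sum_{i=1}^{k}b_i\alfa_i)=w$. Comparing constant coefficients gives $f_0(a')=f_0(a)$, and here is where the first step is used: since $f_0$ is injective, $a'=a$. Comparing the coefficients of $\alfa_1$ then yields $\lambda_{f_0}(a,b_1)+f_1(a)=c+f_1(a)$, that is $\lambda_{f_0}(a,b_1)=c$. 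Thus $z\mapsto\lambda_{f_0}(a,z)$ is surjective, hence bijective, for every $a$, which is precisely the assertion that $[\lambda_{f_0}(a,z)]$ is a local permutation in the variable $z$ (Definition~\ref{locper}).

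The only genuinely delicate point is the need to pin down the pure argument of the preimage in the last step: extracting information about $\lambda_{f_0}(a,\cdot)$ requires that the constant-coefficient equation $f_0(a')=f_0(a)$ force $a'=a$, which is exactly why the bijectivity of $f_0$ must be established first and then fed into the argument for $\lambda_{f_0}$. Everything else is a routine comparison of coefficients in the free $R$-module $R_k$, together with the finiteness of $R$ to upgrade surjectivity to bijectivity.
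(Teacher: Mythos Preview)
Your proof is correct and follows essentially the same approach as the paper's own proof: both use the evaluation formula from Lemma~\ref{polyeval}(\ref{2ndcal}), first establish surjectivity of $f_0$ on $R$ by comparing constant coefficients, then use the resulting injectivity of $f_0$ to pin down the pure part of a preimage of $f_0(a)+(c+f_1(a))\alfa_1$ and read off $\lambda_{f_0}(a,b_1)=c$ from the $\alfa_1$-coefficient. The arguments are virtually identical.
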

		\begin{proof}	
			Let $f_0,\ldots,f_k \in R[x]$ and put $f=f_0+\sum\limits_{i=1}^{k}f_i\alfa_i$.
			Suppose that $f$ is a permutation polynomial on $R_k$ and let $a,b_1,\ldots,b_k\in R$.
			Then, by Lemma~\ref{polyeval}, 	\[			f(a+\sum\limits_{i=1}^{k}b_i\alfa_i)=f_0(a )+ \sum\limits_{i=1}^{k}(\lambda_{f_0}(a,b_i)+f_i(a))\alfa_i.
			\] So, the constant coefficient   of $f(a+\sum\limits_{i=1}^{k}b_i\alfa_i)$ is completely determined by  the value of $f_0$ at $a$. Hence, $f_0$ is surjective on $R$ since otherwise $f$ cannot be surjective on $R_k$.
			Thus $f_0$ is a permutation polynomial on $R$ since $R$ is finite.
					Now, let $a\in R$ be arbitrary. To show that $[\lambda_{f_0}(a,z)] $ is surjective, let $c\in R$. Then, since $f$ is a permutation polynomial on $R_k$, there exist $d,b_1,\ldots,b_k\in R $ (i.e., $d+\sum\limits_{i=1}^{k}b_i\alfa_i\in R_k$) such that  $$f(d+\sum\limits_{i=1}^{k}b_i\alfa_i)=f_0(a)+(f_1(a)+c)\beta_1.$$ But then by Lemma~\ref{polyeval},
			\[
			f_0(d)+\sum\limits_{i=1}^{k}(\lambda_{f_0}(d,b_i)+f_i(d)) \alfa_i=f_0(a)+(f_1(a)+c)\beta_1.
			\]
			Hence, $f_0(d)=f_0(a)$ and  $\lambda_{f_0}(d,b_1)+f_1(d)= f_1(a)+c$ since $R_k$ is an $R$-algebra with base $1,\beta_1,\ldots,\beta_k$. So, $a=d$ since $f_0$ is a permutation polynomial on $R$, 
			whence $\lambda_{f_0}(a,b_1)=c$. Thus,
			$[\lambda_{f_0}(a,z)] $ is surjective, so it is bijective since $R$ is finite. Thus, $[\lambda_{f_0}(y,z)] $ is a local permutation in $z$ by Definition~\ref{locper}.
		\end{proof}
		
		\begin{lemma}\label{equvper}
        Let  $R$ be a finite ring and let $f_0\in R[x]$. Suppose that $f_0$ is a permutation polynomial  on $R$  and  $[\lambda_{f_0}(y,z)] $ is a local permutation in the variable $z$.  Then $  f_0+ \sum\limits_{i=1}^{k}f_i \alfa_i$ is a permutation polynomial on $R_k$ for every $f_1,\ldots,f_k \in R[x]$. In particular, $f_0$  is a permutation polynomial on $R_k$.
        
        
		\end{lemma}
		\begin{proof}
        Let   $f_0\in R[x]$ be   a permutation polynomial  on $R$  and assume that $[\lambda_{f_0}(y,z)] $ is a local permutation in  $z$.
       Then, for arbitrarily   $f_1,\ldots,f_k\in R[x]$, put $f=f_0+\sum\limits_{i=1}^{k} f_i\alfa_i.$ We show that $f$ is a permutation polynomial on $R_k$.
			Since $R_k$ is finite, it is enough to show that $f$ is injective on $R_k$, that is $f$ induces an injective function on $R_k$. 
			Let $a,b_1,\ldots,b_k,c,d_1,\ldots,c_k\in R$ such that $f(a+\sum\limits_{i=1}^{k}b_i\alfa_i)=f(c+\sum\limits_{i=1}^{k}d_i\alfa_i)$.
			Then by    Lemma~\ref{polyeval},
			\[
			f_0(a)+\sum\limits_{i=1}^{k}(\lambda_{f_0}(a,b_i)+f_i(a)) \alfa_i= 
			f_0(c)+\sum\limits_{i=1}^{k}(\lambda_{f_0}(c,d_i)+f_i(c)) \alfa_i.  \]
			Thus, we have that $f_0(a)=f_0(c)$ and $\lambda_{f_0}(a,b_i)+f_i(a)=\lambda_{f_0}(c,d_i)+f_i(c)$ for $i=1,\ldots,k$. Hence,
			since $f_0$ is a permutation polynomial on $R$, $a=c$, whence  $\lambda_{f_0}(a,b_i)=\lambda_{f_0}(a,d_i)$ for $i=1,\ldots,k$. Thus, $b_i=d_i$ for $i=1,\ldots,k$ since $[\lambda_{f_0}(a,z)]$  is injective.   Therefore, $f$ is a permutation polynomial on $R_k$. The last assertion follows by choosing $f_1=\cdots=f_k=0$.
		\end{proof} 
		Now we are in a position to give a characterization of permutation polynomials on  the ring $R_k$.
		\begin{theorem}\label{Cherper} 
			Let  $R$ be a finite non-commutative ring. Let $f =f_0+ \sum\limits_{i=1}^{k}f_i \alfa_i$, 
			where $f_0,\ldots,f_k \in R[x]$, and let $\lambda_{f_0} $ be the assigned polynomial to $f_0$ in the non-commuting variables $y$ and $z$. Then 
			the following statements are equivalent:
			\begin{enumerate}
				\item 	$f$ is a permutation polynomial
				on $R_k$;
				\item 	$f_0$ is a permutation polynomial
				on $R_k$;
				
				\item $f_0$ is a permutation polynomial on $R$ and $[\lambda_{f_0}(a,z)] $ is  surjective  for every $a\in R$;
				\item $f_0$ is a permutation polynomial on $R$ and $[\lambda_{f_0}(a,z)] $ is injective for every $a\in R$;
				\item $f_0$ is a permutation polynomial on $R$ and $[\lambda_{f_0}(y,z)] $ is a local  permutation in    the variable $z$.	\end{enumerate}
			
		\end{theorem}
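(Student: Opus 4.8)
The plan is to route every implication through the two preceding lemmas together with the finiteness of $R$, so that the five conditions collapse into a single short cycle of equivalences. First I would dispatch the purely set-theoretic part. For a fixed $a\in R$, the assignment $[\lambda_{f_0}(a,z)]$ is a self-map of the finite set $R$; consequently it is surjective if and only if it is injective if and only if it is bijective. Since, by Definition~\ref{locper}, saying that $[\lambda_{f_0}(y,z)]$ is a ``local permutation in the variable $z$'' means exactly that $[\lambda_{f_0}(a,z)]$ is bijective for every $a\in R$, the statements (3), (4) and (5) coincide once $f_0$ is assumed to be a permutation polynomial on $R$.

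Next I would extract the equivalence (1)$\Leftrightarrow$(5) directly from the machinery already in place: Lemma~\ref{perpurcof} supplies the implication (1)$\Rightarrow$(5), while Lemma~\ref{equvper} supplies (5)$\Rightarrow$(1). Combining this with the preceding paragraph yields the block (1)$\Leftrightarrow$(3)$\Leftrightarrow$(4)$\Leftrightarrow$(5).

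It remains only to fold in condition (2). The key observation is that $f_0$, while a priori an element of $R[x]$, may equally be regarded as an element of $R_k[x]$, where it reads $f_0+\sum_{i=1}^{k}0\,\alfa_i$; that is, its pure part is $f_0$ itself, all its $\alfa_i$-components vanish, and its assigned polynomial is again $\lambda_{f_0}$. Applying the equivalence (1)$\Leftrightarrow$(5) already obtained, but now to the polynomial $f_0$ in place of $f$, gives: $f_0$ is a permutation polynomial on $R_k$ if and only if $f_0$ is a permutation polynomial on $R$ and $[\lambda_{f_0}(y,z)]$ is a local permutation in $z$, which is precisely statement (5). Hence (2)$\Leftrightarrow$(5), and the cycle is complete.

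I do not expect a genuine obstacle, since the substance of the argument is already carried by Lemmas~\ref{perpurcof} and~\ref{equvper}. The one step meriting care is the self-referential use of (1)$\Leftrightarrow$(5) on $f_0$: one must verify that viewing $f_0\in R[x]$ inside $R_k[x]$ leaves both its pure part and its assigned polynomial unchanged, so that the hypotheses of the equivalence are met verbatim. This is immediate from the uniqueness of the representation $f=f_0+\sum_{i=1}^{k}f_i\alfa_i$ of polynomials over $R_k$.
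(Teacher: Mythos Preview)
Your proof is correct and follows essentially the same route as the paper's own proof: the paper also invokes Lemma~\ref{perpurcof} and Lemma~\ref{equvper} for the equivalence of (1), (2), and (5), and finiteness of $R$ for the equivalence of (3), (4), and (5). Your write-up is more explicit than the paper's about why (2) fits in---namely, by applying the (1)$\Leftrightarrow$(5) equivalence to $f_0$ itself viewed in $R_k[x]$---which is a helpful clarification of a step the paper leaves implicit.
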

		\begin{proof}
			By Lemma~\ref{perpurcof} and Lemma~\ref{equvper}, the statements (1), (2), and (5) are equivalent.
			Since $R$ is finite, the statements (3), (4), and (5) are equivalent.
		\end{proof}
		
		\begin{remark}
			It is worth here to mention that	for the ring of $n\times n$  matrices  over a finite local ring $R$, $M_n(R)$, Brawley~\cite{matringpolybr} characterized scalar permutation polynomials of $M_n(R)$  by putting conditions not only on these polynomials but also on their assigned polynomials. More explicitly 
			he proved the following criterion  ~\cite[Theorem~2]{matringpolybr}:
			
			Let $R$ be a finite commutative local ring with Maximal ideal $M\ne \{0\}$. Let $f\in R[x]$ and let $\bar{f}\in \mathbb{F}_q[x]$ be the image of $f$ in $\mathbb{F}_q[x]$, where $\mathbb{F}_q=R/M$. Then
			$f$ is a permutation polynomial on $M_n(R)$ if and only if 
			\begin{enumerate}
				\item $\bar{f}$ is a permutation polynomial on $M_n(\mathbb{F}_q)$, and
				\item for every matrix $A\in M_n(\mathbb{F}_q)$, the function $[\lambda_{\bar{f}}(A,z)]$ is a permutation of
				$M_n(\mathbb{F}_q)$. 
			\end{enumerate}
		\end{remark}
		Theorem~\ref{Cherper} shows that the criterion 
		to be a permutation polynomial on $R_k$ depends only
		on $f_0$. As a consequence, we have the 
		following corollary.
		
		\begin{corollary}\label{CHSPPfirstcoordinate} Let $R$ be a finite non-commutative ring.
			Let $f =f_0 +\sum\limits_{i=1}^{k}f_i \alfa_i$,
			where $f_0,\dots,f_k \in R[x]$. 
			Then the following statements are equivalent:
			\begin{enumerate}
				\item $f$ is a permutation polynomial
				on $R_k$; 
				\item $f_0+f_i\alfa_i$ is a permutation polynomial on $R[\alfa_i]$ for every $i\in \{1,\ldots,k\}$;
			
				\item $f_0$ is a permutation polynomial on $R[\alfa_i]$ for every $i\in \{1,\ldots,k\}$;
				\item $f_0+\sum\limits_{i=1}^{j}f_i\alfa_i$ is a permutation polynomial on $R_l$ for every $1\le j\le k$ and 
				$l\ge j$;
				
				\item $f_0$ is a permutation polynomial on $R_j$ for every $j\ge 1$.
			\end{enumerate}
		\end{corollary}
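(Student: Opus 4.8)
The plan is to let Theorem~\ref{Cherper} carry essentially the entire argument, after first isolating its structural content. Reading that theorem carefully, the property of being a permutation polynomial on a ring of dual numbers depends \emph{only} on the pure part, and reduces to exactly two conditions on that pure part: that it be a permutation polynomial on $R$, and that its assigned polynomial be a local permutation in the variable $z$. The decisive point is that neither condition refers to the number of dual variables at all; both are intrinsic statements about a polynomial in $R[x]$. This independence from the number of variables is precisely what will force the whole list of five statements to collapse onto a single pair of conditions, and hence to be mutually equivalent.

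First I would record the extracted principle in a uniform form: for any $m\ge 1$ and any $g\in R_m[x]$ with pure part $g_0\in R[x]$, Theorem~\ref{Cherper} yields that $g$ is a permutation polynomial on $R_m$ if and only if the following two conditions hold, which I will call (A) and (B): (A) $g_0$ is a permutation polynomial on $R$; and (B) $[\lambda_{g_0}(a,z)]$ is a local permutation in the variable $z$ for every $a\in R$. Since $R[\alfa_i]$ is nothing but the ring of dual numbers $R_1$ in one variable, this principle also covers the one-variable rings appearing in statements (2) and (3).

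Then I would apply the principle line by line, in each case computing the relevant pure part and noting the ambient dual-number ring. In (1) the pure part of $f$ is $f_0$ and the ambient ring is $R_k$, so (1) is equivalent to ``(A) and (B)''. In (2), the pure part of $f_0+f_i\alfa_i$ is again $f_0$ and the ambient ring $R[\alfa_i]\cong R_1$, so each of the $k$ assertions is equivalent to ``(A) and (B)'' uniformly in $i$; hence so is their conjunction. Statement (3) is the same computation with pure part $f_0$ over $R[\alfa_i]\cong R_1$. For (4), the truncation $f_0+\sum\limits_{i=1}^{j}f_i\alfa_i$, viewed over $R_l$ with $l\ge j$, still has pure part $f_0$, so each of its membership statements is equivalent to ``(A) and (B)'' uniformly in $j$ and $l$. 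Finally (5) applies the principle to $f_0$ over $R_j$, once more with pure part $f_0$. Consequently every one of the five statements is equivalent to the single conjunction of (A) and (B), and therefore they are mutually equivalent.

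I do not expect a genuine obstacle here; the work is bookkeeping rather than invention. The only points deserving a word of care are the identification $R[\alfa_i]\cong R_1$ and the observation that passing from $R_j$ to a larger $R_l$, or replacing $f$ by a truncation $f_0+\sum\limits_{i=1}^{j}f_i\alfa_i$, leaves the pure part $f_0$ unchanged. Once this is made explicit, the variable-free conditions (A) and (B) are literally the same object in every line of the list, which is what makes the equivalence transparent.
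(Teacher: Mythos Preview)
Your proposal is correct and follows exactly the approach the paper intends: the corollary is stated there without proof, simply as an immediate consequence of Theorem~\ref{Cherper}, and your argument spells out precisely why---namely, that condition~(5) of Theorem~\ref{Cherper} depends only on $f_0$ and not on the number of dual variables, so every item in the list reduces to the same pair of conditions on $f_0$.
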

		Another consequence of Theorem~\ref{Cherper} is the following.
		\begin{corollary}\label{lamdasur}
			Let $f =f_0 +\sum\limits_{i=1}^{k}f_i \alfa_i$ be a permutation polynomial on $R_k$,
			where $f_0,\dots,f_k \in R[x]$. Then the function $[\lambda_{f_0}(y,z)]\colon R\times R \longrightarrow R $ is surjective.  
		\end{corollary}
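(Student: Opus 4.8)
The plan is to read the conclusion off directly from Theorem~\ref{Cherper}. Since $f=f_0+\sum_{i=1}^{k}f_i\alfa_i$ is assumed to be a permutation polynomial on $R_k$, the equivalence of statements (1) and (3) in that theorem gives us immediately that $f_0$ is a permutation polynomial on $R$ and that the function $[\lambda_{f_0}(a,z)]$ is surjective for \emph{every} $a\in R$. The only thing left to argue is that this \emph{local} surjectivity in the variable $z$ upgrades to \emph{global} surjectivity of the two-variable function $[\lambda_{f_0}(y,z)]\colon R\times R\longrightarrow R$.

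The key observation is that, for surjectivity of the two-variable map, it suffices to have surjectivity in the second variable for a \emph{single} value of the first variable. Concretely, I would fix one $a\in R$ (say $a=0$, where $\lambda_{f_0}(0,z)=a_1z$ by Fact~\ref{lamdapro}), take an arbitrary target $c\in R$, and invoke the surjectivity of $[\lambda_{f_0}(a,z)]$ to obtain a $b\in R$ with $\lambda_{f_0}(a,b)=c$. Then the pair $(a,b)\in R\times R$ is a preimage of $c$ under $[\lambda_{f_0}(y,z)]$, so the image of $[\lambda_{f_0}(y,z)]$ is all of $R$.

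Honestly, there is no genuine obstacle in this corollary: Theorem~\ref{Cherper} already delivers that each slice $z\mapsto\lambda_{f_0}(a,z)$ is not merely surjective but bijective (a local permutation in $z$), which is strictly stronger than what is required here. The statement is therefore an immediate consequence, and the proof amounts to the one-line remark that the image of a single slice $\{\lambda_{f_0}(a,b)\mid b\in R\}$ already exhausts $R$. I would keep the write-up to exactly that citation and the one-sentence slice argument.
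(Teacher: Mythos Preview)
Your argument is correct and matches the paper's approach: the corollary is stated there without proof, simply as ``another consequence of Theorem~\ref{Cherper}'', and your one-line slice argument is exactly the intended deduction. There is nothing to add.
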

		\begin{remark and question}\label{isitredundant}
			\leavevmode
			\begin{enumerate}
				\item If $R$ is a commutative ring, then it will not be hard to see that the condition on $\lambda_{ f_0}(y,z)$ ($f_0\in R[x]$) in Theorem~\ref{Cherper}  is equivalent to $f_0'$ maps $R$  to its group of units (see also Remark~\ref{notelamda}-(\ref{lamdtoderiv})). But then ~\cite[Theorem 4.1]{polysev} becomes a special case of Theorem~\ref{Cherper}. 
				\item In  the special case $R$ is a local commutative ring that is not a field (hence in the case  $R$ is a direct sum of local rings that are not fields), the condition on $f_0'$ is redundant, that is 
				$f_0$ is a permutation polynomial on $R_k$ if and only if $f_0$ is a permutation polynomial on $R$ (\cite[Proposition~4.7]{polysev}). We notice here that   over the finite field $\mathbb{F}_q$ there exist  $f\in \mathbb{F}_q$ a permutation  polynomial  on  $\mathbb{F}_q$ that is not a permutation polynomial on  ${(\mathbb{F}_q)}_{\tiny{k}}$ (for example $x^q$). However, we can always find a polynomial 
				$g\in {\mathbb{F}_q}$ such that $[g]_{\mathbb{F}_q}=[f]_{\mathbb{F}_q}$ and $g$ is a permutation polynomial on  ${(\mathbb{F}_q)}_{\tiny{k}}$ (see for instance \cite[Lemma~4.9]{Haki} or \cite[Lemma~4.10]{polysev}).
				\item The previous point motivates us to ask  the following  question in  the non-commutative case:
				
				Is the condition  on $\lambda_{ f_0}(y,z)$  in Theorem~\ref{Cherper}   redundant? 
				
				Or equivalently:
				
				Given $f_ 0\in R[x]$. Is  $f_0$    a permutation polynomial on $R$  if and only if  $f_0$ is a permutation polynomial on $R_k$?\label{qos}
				
			\end{enumerate}
		\end{remark and question}
	In the following, we show that the set of polynomials of the form 
	$x+\sum\limits_{i=1}^{k}f_i\alfa_i$, where $f_i\in R[x]$ for $i=1,\ldots,k$, is an abelian group with respect to composition of polynomials for any ring (not necessarily commutative) with unity $1\ne 0$. Furthermore, the set of functions  induced by this group of polynomials is an abelian group of permutations on $R_j$ for every $j\ge k$. Moreover, the composition of permutations in this induced group is compatible  with the composition of their defining polynomials.
	\begin{proposition}\label{nicegroups}
	Let $R$ be a ring with $1\ne 0$ and $k\ge 1$. Set
	\[P_{x\,,k}=\{x+\sum\limits_{i=1}^{k}f_i\alfa_i\mid f_i\in R[x] \text{ for } i=1,\dots,k\}, \text{ and}\]
		\[\mathcal{P}_{x\,,k}=\{[f]_{R_k}\mid f \in P_{x\,,k}\}.\]
		Then 
		\begin{enumerate}
			\item $P_{x\,,k}$ is an abelian group with respect to composition of polynomials; 
			\item $P_{x\,,k}\lhd P_{x\,,j}$ for every $j>k$;
			\item $\mathcal{P}_{x\,,k}$ is an abelian group with respect to composition of functions. Further, for every  $F_1,F_2\in \mathcal{P}_{x\,,k}$ and $f,g\in P_{x\,,k}$
			such that $F_1=[f]_{R_k} $ and $F_2=[g]_{R_k} $, we have that
			\[F_1\circ F_2=[f]_{R_k}\circ [g]_{R_k}=[f \circ g]_{R_k};\]	
			\item $\mathcal{P}_{x\,,k}$ is embedded normally in $\mathcal{P}_{x\,,j}$ for every $j>k$;
		 \item    $\mathcal{P}_{x\,,k}$ is embedded in  $\overline{\PrPol[R_j]} $ for every $j\ge k$, whenever $R$ is finite. In this case, $|\mathcal{P}_{x\,,k}|=|\PolFun|^k$.
		 		\end{enumerate}
	\end{proposition}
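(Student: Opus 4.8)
The plan rests on two explicit formulas---one for composition of polynomials in $P_{x\,,k}$ and one for the functions they induce---both extracted from Lemma~\ref{polyeval} together with the identity $\lambda_x(y,z)=z$ of Fact~\ref{lamdapro}. First I would compute $f\circ g$ for $f=x+\sum_{i=1}^k f_i\alfa_i$ and $g=x+\sum_{i=1}^k g_i\alfa_i$. Substituting $g$ for $x$ gives $f\circ g=g+\sum_i f_i(g)\alfa_i$, so everything reduces to evaluating $f_i(g)$. Running the computation of~(\ref{eqforeval}) inside $R_k[x]$ (legitimate, since the $\alfa_j$ are central with $\alfa_i\alfa_j=0$) yields $g^l=x^l+\sum_j m_l(x,g_j)\alfa_j$; because $x$ commutes with each $g_j$ in $R[x]$ every summand of $m_l(x,g_j)$ collapses to $x^{l-1}g_j$, so $m_l(x,g_j)=l\,x^{l-1}g_j$ as in Remark~\ref{notelamda}(\ref{lamdtoderiv}), whence $f_i(g)=f_i+\sum_j f_i'g_j\alfa_j$. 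Multiplying by $\alfa_i$ annihilates the cross terms via $\alfa_j\alfa_i=0$, leaving
\[f\circ g=x+\sum_{i=1}^k(f_i+g_i)\alfa_i.\]
Thus $f\mapsto(f_1,\dots,f_k)$ is a bijection $P_{x\,,k}\to(R[x],+)^k$ carrying composition to coordinatewise addition, so $P_{x\,,k}$ is abelian with identity $x$ and inverses $x-\sum_i f_i\alfa_i$, proving (1). For (2), padding with zero components embeds $P_{x\,,k}$ as a subgroup of $P_{x\,,j}$, and normality is automatic since $P_{x\,,j}$ is abelian.

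Next I would record the induced function: by Lemma~\ref{polyeval}(\ref{2ndcal}) with pure part $x$ and $\lambda_x(a,b_i)=b_i$,
\[[f]_{R_k}\Big(a+\sum_j b_j\alfa_j\Big)=a+\sum_i\big(b_i+f_i(a)\big)\alfa_i.\]
Composing two such functions adds the $f_i(a)$ and $g_i(a)$ contributions, which by the composition formula above is exactly $[f\circ g]_{R_k}$; this is the compatibility $[f]_{R_k}\circ[g]_{R_k}=[f\circ g]_{R_k}$. Hence $f\mapsto[f]_{R_k}$ is a surjective homomorphism from the abelian group $P_{x\,,k}$, so $\mathcal{P}_{x\,,k}$ is an abelian group, giving (3). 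For (4), the same function formula shows $[f]_{R_k}=[f']_{R_k}$ iff $[f_i]_R=[f_i']_R$ for all $i$, a condition that does not see whether $f$ is read over $R_k$ or over $R_j$; therefore $[f]_{R_k}\mapsto[f]_{R_j}$ is a well-defined injection, a homomorphism by compatibility on $R_j$, and its image is normal because $\mathcal{P}_{x\,,j}$ is abelian.

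Finally, for (5), every $f\in P_{x\,,k}$ has pure part $f_0=x$, a permutation polynomial on $R$, and $\lambda_{f_0}(a,z)=z$ is bijective for each $a$; so Theorem~\ref{Cherper} gives $[f]_{R_j}\in\PrPol[R_j]\subseteq\overline{\PrPol[R_j]}$, and closure under composition (the compatibility of (3), now on $R_j$) makes $\mathcal{P}_{x\,,k}$ a subgroup of the closure group. For the order, the kernel of the homomorphism of (3) is $\{x+\sum_i f_i\alfa_i\mid f_i\in\Null\}\cong(\Null)^k$, whence $\mathcal{P}_{x\,,k}\cong(R[x]/\Null)^k$ and $|\mathcal{P}_{x\,,k}|=|\PolFun|^k$ by Remark~\ref{eqrelation}. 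I expect the one genuinely delicate step to be the polynomial composition formula in (1)---justifying the expansion of $g^l$ and the vanishing of the $\alfa_j\alfa_i$ cross terms---after which every remaining claim is a formal consequence of the two displayed identities and Theorem~\ref{Cherper}.
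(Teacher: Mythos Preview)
Your proof is correct and follows essentially the same approach as the paper: both compute the composition formula $f\circ g=x+\sum_i(f_i+g_i)\alfa_i$ by expanding powers of $g$ and killing cross terms via $\alfa_j\alfa_i=0$, then derive all remaining parts from this identity together with Lemma~\ref{polyeval}. Your treatment is in places more explicit---you actually expand $g^l$ and identify the derivative, whereas the paper observes directly that the $\alfa_i$-terms in $(x+\sum_i g_i\alfa_i)^l\alfa_r$ vanish---and your order count in (5) via $\ker\big(f\mapsto[f]_{R_k}\big)\cong(\Null)^k$ is a clean alternative to the paper's bare citation of Corollary~\ref{CHSGencount}, but these are cosmetic differences rather than a different route.
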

\begin{proof}
(1) First, we show  that $P_{x\,,k}$ is closed with respect to composition. Let $f,g\in  P_{x\,,k}$. Then $f=x+\sum\limits_{i=1}^{k}f_i\alfa_i$ and $g=x+\sum\limits_{i=1}^{k}g_i\alfa_i$, where $f_1,\ldots,f_k,g_1,\ldots,g_k\in R[x]$. So, we can write 
$f_i=\sum\limits_{l=0}^{n_i}a_{l\.i}x^l$, where $a_{l\.i}\in R$ for $l=0,\ldots,n_i$ and $i=1,\ldots,k$.
Consider, 	\begin{align*}
	f\circ g &=g +\sum\limits_{r=1}^{k}f_r(g)=g+\sum\limits_{r=1}^{k}f_r(x+\sum\limits_{i=1}^{k}g_i\alfa_i)\alfa_r\\ 
	& =g+\sum\limits_{r=1}^{k}\sum\limits_{l=0}^{n_r}a_{l\,r}(x+\sum\limits_{i=1}^{k}g_i\alfa_i)^l\alfa_r=g+\sum\limits_{r=1}^{k}\sum\limits_{l=0}^{n_r}a_{l\,r}x^l \alfa_r \quad (\text{ since  }\alfa_i\alfa_r=0)\\
	&=x+\sum\limits_{i=1}^{k}g_i\alfa_i +\sum\limits_{r=1}^{k}f_r\alfa_r=x+\sum\limits_{i=1}^{k}(g_i+f_i)\alfa_i\in P_{x\,,k}.  
 	\end{align*}
 Similarly, we have that \begin{equation}\label{forcompat} g\circ f=x+\sum\limits_{i=1}^{k}(f_i+g_i)\alfa_i= x+\sum\limits_{i=1}^{k}(g_i+f_i)\alfa_i =f\circ g.
 \end{equation} Evidently, $x$ is the identity of $ P_{x\,,k}$ and composition of polynomial is associative.
 Also, if $f=x+\sum\limits_{i=1}^{k}f_i\alfa_i\in P_{x\,,k}$, then it will not be hard  to see that $h=x-\sum\limits_{i=1}^{k}f_i\alfa_i\in P_{x\,,k}$ is the inverse of $f$. Therefore, $ P_{x\,,k}$ is an abelian group.
 
 (2) Follows from (1) since $ P_{x\,,k}$ is contained in $ P_{x\,,j}$  for every $j>k$.
 
 (3) First, we show the operation  is closed on  $ \mathcal{ P}_{x\,,k}$. So, let $F_1,F_2\in \mathcal{ P}_{x\,,k}$. Then $F_1=[f]_{R_k}$ and $F_2= [ g]_{R_k}$ for some $f,g\in P_{x\,,j}$ such that $f= x+\sum\limits_{i=1}^{k}f_i\alfa_i$ and $g= x+\sum\limits_{i=1}^{k}g_i\alfa_i$ where
 $f_i,g_i\in R[x]$ for $i=1,\ldots,k$. Now, let $a_0,\ldots a_k \in R$ and consider
 \begin{align*}
 F_1\circ F_2 &(a_0+\sum\limits_{i=1}^{k}a_i\alfa_i)=F_1(F_2(a_0+\sum\limits_{i=1}^{k}a_i\alfa_i))=F_1(g(a_0+\sum\limits_{i=1}^{k}a_i\alfa_i))\\
 & =F_1(a_0+\sum\limits_{i=1}^{k}(a_i+g_i(a_0))\alfa_i) \text{ (by Lemma~\ref{polyeval})}\\
 & =f(a_0+\sum\limits_{i=1}^{k}(a_i+g_i(a_0))\alfa_i) =	a_0+\sum\limits_{i=1}^{k}(a_i+g_i(a_0))\alfa_i + \sum\limits_{i=1}^{k} f_i(a_0)\alfa_i\\
 & =a_0+\sum\limits_{i=1}^{k} a_i\alfa_i + \sum\limits_{i=1}^{k} (f_i(a_0)+g_i(a_0))\alfa_i\\
 &=
 (f\circ g)(a_0+\sum\limits_{i=1}^{k} a_i\alfa_i) \text{ (by Equation ~(\ref{forcompat}))}.	\end{align*}
 Thus, $F_1\circ F_2=[f\circ g]_{R_k} \in \mathcal{ P}_{x\,,k}$ since $f\circ g=x+\sum\limits_{i=1}^{k} (f_i +g_i)\alfa_i\in P_{x\,,k}$.
 This also, shows that $[f]_{R_k}\circ [g]_{R_k}=[f\circ g]_{R_k}$, i.e., composition of functions is compatible with composition of their defined polynomials. Further, by Equation~(\ref{forcompat}), we see easily that $F_1\circ F_2=F_2\circ F_1$. Evidently, composition of functions is associative.  Further, it is clear that $[x]_{R_k}$ is the identity element of $\mathcal{ P}_{x\,,k}$.  Finally, it is not difficult to see that if $F=[x+\sum\limits_{i=1}^{k}f_i\alfa_i]_{R_k}$, then $F$ is invertible and its inverse is the function
 $[x-\sum\limits_{i=1}^{k}f_i\alfa_i]_{R_k}$.
 
 (4) Let $j>k$ and  define a map \[\alpha:
 \mathcal{ P}_{x\,,k} \longrightarrow \mathcal{ P}_{x\,,j},\quad
 F\mapsto [x+\sum\limits_{i=1}^{k}f_i\alfa_i]_{R_j}, \text{ where } F=[x+\sum\limits_{i=1}^{k}f_i\alfa_i]_{R_k}.
 \] By Corollary~\ref{CHSGencount}, $\alpha$ is well defined. Further, by the compatibility property, we have that  $\alpha$ is a homomorphism. Now, if $\alpha (F) =[x+\sum\limits_{i=1}^{k}f_i\alfa_i]_{R_j}$ is the identity function on $R_{j}$, then $[x+\sum\limits_{i=1}^{k}f_i\alfa_i]_{R_k}=F$ is the identity on $R_k$
 since $[x+\sum\limits_{i=1}^{k}f_i\alfa_i]_{R_k}$ is the restriction of $[x+\sum\limits_{i=1}^{k}f_i\alfa_i]_{R_j}$ to $R_k$. Thus, $\ker \alpha$ contains only the identity.
 Therefore, $\alpha$ is a monomorphism, and $\mathcal{P}_{x\,,k}$ is embedded   in $\mathcal{P}_{x\,,j}$ for every $j>k$. Evidently, $\alpha (\mathcal{P}_{x\,,k})$ is a normal subgroup of $\mathcal{P}_{x\,,j}$ (being a subgroup of an abelian group).

 (5) Follows from (4) and Corollary~\ref{CHSGencount}.
 \end{proof} 
\begin{remark}\label{hascomp}
	We have already seen in the previous proposition that the set $\mathcal{P}_{x\,,k}$ is a group of permutations on $R_k$. So, evidently  $\mathcal{P}_{x\,,k}$ is contained in $ \PrPol[R_k]$. Thus, $\mathcal{P}_{x\,,k}$ is a subgroup of the closure group $ \overline{\PrPol[R_k]}$. We  will see later in Section~\ref{6}   
	in the finite commutative case that $\mathcal{P}_{x\,,k}$ has a complement in the group $  \PrPol[R_k]$ (since in this case $ \overline{\PrPol[R_k]}=\PrPol[R_k]$)
\end{remark}
	
		In the following, we find the cardinality of the set $ \PrPol[R_k]$ in terms of the number of polynomial functions
		on the ring $R$ and the number of pairs $([g]_R,[\lambda_{ g}(y,z)])$ such that $g\in R[x]$ is a permutation polynomial on $R$ and $\lambda_{ g}(y,z)$ maps $R\times R$ onto $R$.
		\begin{proposition} \label{CHSGeperncount}
			Let $R$ be a finite non-commutative ring. 
			Let $L$ designate   the number of pairs of functions $(F,H)$ such that  
			\begin{enumerate}
				\item $F\colon R\longrightarrow R $ is bijective;
				\item $H\colon R\times R\longrightarrow R$ is a local permutation in the second coordinate;
			\end{enumerate}
			occurring as $([f]_R,[\lambda_{ f}(y,z)])$ for some $f\in R[x]$.
		
				Then  the number  of polynomial permutations 
			on $R_k$  is given by \[|\PrPol[R_k]|=L\cdot |\PolFun[R]|^k.\] 
		\end{proposition}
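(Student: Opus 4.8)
The plan is to count the distinct \emph{functions} in $\PrPol[R_k]$ by exhibiting an explicit bijection that separates the defining data of a permutation polynomial into a ``pure part'' contribution and a freely varying remainder. Write $\mathcal{S}$ for the set of pairs counted by $L$, namely
\[
\mathcal{S}=\{([f]_R,[\lambda_f(y,z)])\mid f\in R[x],\ [f]_R\text{ bijective},\ [\lambda_f(y,z)]\text{ a local permutation in }z\},
\]
so that $L=|\mathcal{S}|$. I would then define a map
\[
\Phi\colon \PrPol[R_k]\longrightarrow \mathcal{S}\times\PolFun[R]^k
\]
by choosing, for a given permutation function on $R_k$, a representing polynomial $f=f_0+\sum_{i=1}^{k}f_i\alfa_i$ with $f_0,\dots,f_k\in R[x]$, and setting $\Phi([f]_{R_k})=\big(([f_0]_R,[\lambda_{f_0}(y,z)]),\,[f_1]_R,\dots,[f_k]_R\big)$. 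The goal is to show $\Phi$ is a well-defined bijection, which immediately yields $|\PrPol[R_k]|=|\mathcal{S}|\cdot|\PolFun[R]|^{k}=L\cdot|\PolFun[R]|^{k}$.

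First I would check that $\Phi$ is well defined and lands in the stated target. Independence of the chosen representative is exactly Corollary~\ref{CHS6}: if $f$ and $g$ induce the same function on $R_k$, then $[\lambda_{f_0}(y,z)]=[\lambda_{g_0}(y,z)]$ and $[f_i]_R=[g_i]_R$ for all $i=0,\dots,k$, so the images coincide. That the first component genuinely lies in $\mathcal{S}$ follows from Theorem~\ref{Cherper}: since $[f]_{R_k}$ is a permutation, $f_0$ is a permutation polynomial on $R$ and $[\lambda_{f_0}(y,z)]$ is a local permutation in $z$, which are precisely the membership conditions for $\mathcal{S}$. The remaining coordinates $[f_i]_R$ are by definition elements of $\PolFun[R]$.

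Injectivity of $\Phi$ is again a direct reading of Corollary~\ref{CHS6}: equality of images forces $[\lambda_{f_0}(y,z)]=[\lambda_{g_0}(y,z)]$ and $[f_i]_R=[g_i]_R$ for $i=0,\dots,k$, which is exactly the criterion for $f\quv g$ on $R_k$. For surjectivity, given a pair $((F,H),\,G_1,\dots,G_k)\in\mathcal{S}\times\PolFun[R]^k$, the definition of $\mathcal{S}$ supplies a polynomial $f_0\in R[x]$ with $([f_0]_R,[\lambda_{f_0}(y,z)])=(F,H)$ such that $f_0$ is a permutation polynomial on $R$ and $[\lambda_{f_0}(y,z)]$ is a local permutation in $z$; I would then pick $f_1,\dots,f_k\in R[x]$ representing $G_1,\dots,G_k$ and set $f=f_0+\sum_{i=1}^{k}f_i\alfa_i$. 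By the equivalence of conditions (3)/(5) and (1) in Theorem~\ref{Cherper}, $f$ is a permutation polynomial on $R_k$, and $\Phi([f]_{R_k})$ is the prescribed tuple. Hence $\Phi$ is a bijection and the formula follows.

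\textbf{Main obstacle.} The analytic content is light; the delicacy is purely conceptual bookkeeping, namely aligning the slightly awkward definition of $L$ (pairs \emph{occurring as} $([f]_R,[\lambda_f(y,z)])$) with the permutation criterion of Theorem~\ref{Cherper}, and making sure the data of a polynomial function on $R_k$ factors cleanly as one pair in $\mathcal{S}$ together with $k$ independent functions in $\PolFun[R]$. This mirrors the counting argument of Proposition~\ref{CHSfirstcountfor}, with the constraint set $A\Null$ there replaced by the pairing condition defining $\mathcal{S}$ here; once the bijection is set up correctly, no further computation is needed.
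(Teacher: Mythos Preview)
Your proposal is correct and follows essentially the same approach as the paper: both use Theorem~\ref{Cherper} to identify the constraint on the pure part $f_0$ (yielding the pairs counted by $L$) and Corollary~\ref{CHSGencount} to see that the remaining components $f_1,\dots,f_k$ contribute independently and freely over $\PolFun[R]$. The paper's argument is a two-line sketch of exactly this, while you have spelled out the bijection $\Phi$ explicitly and verified well-definedness, injectivity, and surjectivity in full; nothing is missing or different in substance.
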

		\begin{proof}
			Given $G\in \PrPol[R_k]$. Then by definition  there exist $g_0,\ldots,g_k\in R[x]$ such that $G $ is induced by the polynomial $g =g_0 +\sum\limits_{i=1}^{k}g_i \alfa_i$. By Theorem~\ref{Cherper}, $[g_0]_R\colon R\longrightarrow R$ is bijective,   $[\lambda_{ g_0}(y,z)]$ is a local permutation in $z$,  and $ [g_i]_R\text{ is arbitrary in }$ $\PolFun \text{ for }i=1,\ldots,k$.
			Thus the result  
			follows by Corollary~\ref{CHSGencount}. 
		\end{proof}
		In the commutative case, the number $L$ is shown to be the numbers of pairs $([f]_R,[f']_R)$ for some permutation $f\in R[x]$ such that $[f']_R$ is a unit-valued polynomial function (i.e., maps   $R$ into its units). In particular, for  the case of the finite field $\mathbb{F}_q$ of  $q$ elements, $L=q!(q-1)^q$ (see~\cite[Proposition~4.11]{polysev}).
		In the next section,  we will obtain different combinatorial descriptions of the number  $L$ of Proposition~\ref{CHSGeperncount}.
		\subsection{Permutation polynomials over finite non-commutative  local chain rings}
		\leavevmode
		The purpose of this subsection is to answer the question of  Remark and Question~\ref{isitredundant} affirmatively for a wide class of finite non-commutative chain rings. For this aim, we recall some facts
		about finite local rings in general and in particular about finite chain rings,  the case of interest.
		
		A finite ring $R$ is called a local ring if the set $M$ of all zero-divisors of $R$ is an ideal (two-sided ideal) of $R$.  In this case, $M$ is the unique maximal ideal of $R$; and there exists a minimal positive integer $N$ such that  $M^N=\{ 0\}$  called the nilpotency index of $M$. Also, the characteristic of the ring  $Char (R)$ is a power of some  prime $p$, that is $Char (R)=p^c$ ($1\le c\le N$); and $R/M =\mathbb{F}_q$ where $q=p^w$ ($w\ge 1$). We notice here that if $c=N$, then $R$ is a commutative ring   (see for example \cite{Raghfinite}).
		Furthermore,  if the lattice of left ideals (equivalently of right ideals) is a chain, $R$ is called a chain 
		ring. It follows then that $M^i=t^iR=Rt^i$  for some element $t\in M\setminus M^2$ ($i=0,1,\ldots,N)$. In particular, fixing an element $a\in R$ and ($1\le i<N$), we have
		\[at^i=t^ia_1 \text{ for some } a_1\in R. \]
		Throughout, we use  $p$  also to designate the element  $\underbrace{1_R+\cdots+1_R}_{ p \text{ terms}}$, and by the ramification index of the finite local chain ring we mean the smallest positive integer $e$ such that $p\in M^e\setminus M^{e+1}$. 
		Also, we mention here that for a finite ring $R$, being a chain ring is equivalent to being a local principal ideal ring. The above-mentioned properties of finite chain rings can be found in \cite{NechaSurvey} and the references therein, and for more recent results we refer the reader to \cite{sami}.  
		
		As we deal with non-commutative rings, we consider implicitly the case $ c<N$, where $N$ is the nilpotency index of the maximal ideal $M$  and    $Char (R)=p^c$.
		
		Recall that a ring $A$ is a semi-commutative ring whenever $a,b\in A$ with $ab=0$ implies that $aAb=0$.
		Then, we have the following lemma which is essential in the forthcoming proofs.
		\begin{lemma}\label{togennecha}
			Let $R$ be a finite chain ring and let $f\in R[x]$. The following statements hold: 
			\begin{enumerate}
				\item $R$  is semi-commutative;
				\item $f(a+m)=f(a) +\lambda_{f}(a,m)$ for every $a,m\in R$  with $m^2=0$. 
			\end{enumerate}
		\end{lemma}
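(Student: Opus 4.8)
The plan is to treat the two parts in the order stated, using part (1) as the engine for part (2).

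For part (1) I would exploit the chain structure recalled just above the lemma. Fix a generator $t$ of $M$ with $M^i=t^iR=Rt^i$ and $M^N=\{0\}$. Every nonzero element then has a well-defined valuation: if $a\in M^i\setminus M^{i+1}$ set $v(a)=i$, and one checks, using $t^jR=Rt^j$ (so that $ut^j=t^ju'$ for units $u,u'$), that $a=t^{v(a)}u$ for a unit $u$ and that $ab=t^{v(a)+v(b)}\cdot(\text{unit})$. Consequently $ab=0$ if and only if $v(a)+v(b)\ge N$. So, assuming $a,b\ne 0$ (the cases $a=0$ or $b=0$ being trivial), the hypothesis $ab=0$ forces $v(a)+v(b)\ge N$. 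Then for every $r\in R$ we have $arb\in M^{v(a)}\,R\,M^{v(b)}\subseteq M^{v(a)}M^{v(b)}=M^{v(a)+v(b)}\subseteq M^N=\{0\}$, using that $M^{v(a)}$ is a two-sided ideal and that $M^iM^j=M^{i+j}$. Hence $aRb=0$ and $R$ is semi-commutative.

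For part (2) the heart is the expansion identity
\[
(a+m)^j=a^j+\sum_{r=1}^{j}a^{r-1}ma^{j-r}=a^j+m_j(a,m)\qquad(m^2=0),
\]
which I would prove by induction on $j$. The cases $j\le 1$ are immediate. For the inductive step I multiply the identity for $j$ on the right by $(a+m)$; beyond the expected terms, the only new contributions are the products $a^{r-1}ma^{j-r}m$, and each vanishes because $ma^{j-r}m\in mRm$, which is $\{0\}$ by the semi-commutativity established in part (1) (the case $r=j$ being $m^2=0$ itself). Collecting the surviving terms yields the identity for $j+1$. Summing against the coefficients of $f=\sum_j a_jx^j$ then gives $f(a+m)=\sum_j a_j\bigl(a^j+m_j(a,m)\bigr)=f(a)+\lambda_f(a,m)$ by Definition~\ref{assignpol}, as desired.

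The main obstacle is precisely this interaction between the two parts. Unlike Lemma~\ref{polyeval}, where the nilpotents $\alfa_i$ are central and satisfy $\alfa_i\alfa_j=0$, so that all higher cross terms drop out automatically, here $m$ is a genuine non-central element of $R$ with only $m^2=0$ at hand. The binomial-type expansion would fail to collapse to a single $m$-linear term were it not for the fact that $mRm=\{0\}$, i.e. that products $m(\cdots)m$ vanish for arbitrary middle factor; this is exactly semi-commutativity, which is why part (1) must be proved first and fed into part (2). Everything else is routine bookkeeping.
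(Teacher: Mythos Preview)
Your proof is correct and follows essentially the same approach as the paper: for part (1) you and the paper both use the valuation coming from the chain $M^i=t^iR=Rt^i$ to deduce that $ab=0$ forces $t^{v(a)+v(b)}=0$, and then push an arbitrary middle factor past the powers of $t$ to conclude $aRb=0$; for part (2) both arguments use the resulting fact $mRm=0$ to kill the cross terms $a^{r-1}m a^{j-r}m$ in the inductive expansion of $(a+m)^j$, and then sum against the coefficients of $f$. Your write-up is in fact slightly more explicit about the induction step than the paper, which simply asserts the expansion and refers the reader back to Lemma~\ref{polyeval}.
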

		\begin{proof}
			Let $R$ be a finite chain ring with maximal ideal $M=tR=Rt$.
			
			(1)   We want to show that for every $a,b\in R$ such that $ab=0$, it follows then $adb=0$ for every $d\in R$. Without loss of generality, we may assume that $a,b\ne 0$. So, if $ab=0$, then $a,b\in M$ since $R$ is local.  By the properties of the ideal $M$, there exist $k_1,k_2\ge 1$ such that $a\in  M^{k_1}\setminus M^{k_1+1}$ and $b\in  M^{k_2}\setminus M^{k_2+1}$. So, $a=a_1t^{k_1}$ and $b=t^{k_2}b_1$ for some $a_1,b_1\in R$. Hence, $a_1,b_1$ are units in $R$ since otherwise we will have $a\in M^{k_1+1}$ and $b\in   M^{k_2+1}$ which is not possible by our choice of $k_1$ and $k_2$.
			Now, $$0=ab=a_1t^{k_1}t^{k_2}b_1=a_1t^{k_1+k_2}b_1,$$
			whence $$t^{k_1+k_2}=0.$$
			Then, consider an arbitrary element $d\in R$. Since $M^{k_1}=Rt^{k_1}=t^{k_1}R$, there exists $d_1\in R$
			such that $dt^{k_2}=t^{k_2}d_1$. Therefore,
			\[a d b=a_1t^{k_1}dt^{k_2}b_1=a_1t^{k_1}t^{k_2}d_1b_1=a_1t^{k_1+k_2}d_1b_1=0. \]
			(2) Let $m\in R$ with $m^2=0$, then by the first assertion, $m a^j m =0$  for every  $a\in R$ and every $j\ge 1$. 
			Then it will not be hard to see that  $(a+m)^j=a^j + \sum\limits_{r=1}^{j}a^{r-1}ma^{j-r} =a^j + m_j(a,m)$, where $m_j(a,m)=\sum\limits_{r=1}^{j}a^{r-1}ma^{j-r}$. 
			Now, the rest of the proof is similar to the argument given in the proof of Lemma~ \ref{polyeval}, and we leave the details to the reader.
		\end{proof}
		We can replace  the polynomial $\lambda_{ f}$ by $f'$ in the second assertion of the previous lemma by 
		requiring that  $a$ and  $m$ are commutable.  The following lemma shows this in general.
		\begin{lemma}\label{calfor2z}
			Let $R$ be a finite non-commutative local ring and let $f\in R[x]$. Then
			$f(a+m)=f(a) +f'(a)m$ for every $a,m\in R$  such that $am=ma$ and $m^2=0$.
		\end{lemma}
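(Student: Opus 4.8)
The plan is to deduce the identity from two facts already in place: the evaluation formula $f(a+m)=f(a)+\lambda_f(a,m)$ (the same shape proved in Lemma~\ref{polyeval} and in Lemma~\ref{togennecha}(2)), and the simplification $\lambda_f(a,m)=f'(a)m$ valid whenever $a$ and $m$ commute, recorded in Remark~\ref{notelamda}(\ref{lamdtoderiv}). Neither locality nor the chain condition is actually needed; the two hypotheses $am=ma$ and $m^2=0$ carry the whole argument.

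First I would write $f=\sum_{j=0}^{n}a_jx^j$ and expand $(a+m)^j$. Each monomial in the expansion is a word in $a$ and $m$; since $m^2=0$ any monomial with two adjacent factors $m$ dies, and for a monomial of the form $\cdots m\,a^i\,m\cdots$ the commutation $am=ma$ gives $m\,a^i\,m=a^i m^2=0$, so every monomial containing two or more factors of $m$ vanishes. What survives is $a^j$ together with the $j$ monomials having a single $m$, so that
\[
(a+m)^j=a^j+\sum_{r=1}^{j}a^{r-1}m\,a^{j-r}=a^j+m_j(a,m).
\]
Equivalently, this is a one-line induction on $j$: from $(a+m)^j=a^j+ja^{j-1}m$ one multiplies by $a+m$ and uses $m^2=0$ and $a^{j-1}m\,a=a^{j}m$ (the latter because $am=ma$) to reach $(a+m)^{j+1}=a^{j+1}+(j+1)a^{j}m$; here $m_j(a,m)$ has already collapsed to $ja^{j-1}m$ since $a^{r-1}m\,a^{j-r}=a^{j-1}m$.

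Summing over $j$ with the coefficients kept on the left then yields
\[
f(a+m)=\sum_{j=0}^{n}a_j(a+m)^j=\sum_{j=0}^{n}a_ja^j+\Big(\sum_{j=1}^{n}ja_ja^{j-1}\Big)m=f(a)+f'(a)m,
\]
using $f'=\sum_{j=1}^{n}ja_jx^{j-1}$. Alternatively I could stop at $f(a+m)=f(a)+\lambda_f(a,m)$ and invoke Remark~\ref{notelamda}(\ref{lamdtoderiv}) to replace $\lambda_f(a,m)$ by $f'(a)m$.

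I do not expect a genuine obstacle here: the only content is the vanishing of the higher-order terms, which $m^2=0$ and $am=ma$ force immediately (the commutation converting each $m\,a^i\,m$ into $a^i m^2$). This is precisely the non-commutative analogue of Lemma~\ref{togennecha}(2), where semicommutativity of the chain ring was what supplied $m\,a^i\,m=0$; here the explicit hypothesis $am=ma$ supplies the same vanishing directly, so the conclusion in fact holds over an arbitrary ring admitting such $a$ and $m$.
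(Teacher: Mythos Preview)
Your proof is correct and follows essentially the same route as the paper: both establish $(a+m)^j=a^j+ja^{j-1}m$ from $am=ma$ and $m^2=0$, then sum with coefficients on the left to obtain $f(a+m)=f(a)+f'(a)m$. Your write-up is more explicit about why the higher-order terms vanish (and correctly notes that locality is not actually used), but the argument is the same.
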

		\begin{proof}
			Let $a,m\in R$ such that $am=ma$ and $m^2=0$. Then, it follows that for $j\ge 1$,
			$(a+m)^j=a^j +ja^{j-1}m$. So, if $f=\sum\limits_{j=0}^{n}a_jx^j$, then
			\[f(a+m)=\sum_{j=0}^{n}a_j(a+m)^j=a_0 +\sum_{j=1}^{n}a_ja^j  +\sum_{j=1}^{n}ja^{j-1}m=f(a)+f'(a)m.\]
		\end{proof}
		\begin{lemma}\label{casec>2}
			Let $R$ be a finite chain ring of characteristic $p^c$ with $c>2$ and maximal ideal $M$ of  nilpotency index $N$, and let $a\in R$. If $pa=0$, then $a^2=0$. 
		\end{lemma}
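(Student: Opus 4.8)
The plan is to reduce the statement to a single numerical inequality about the $t$-adic valuation on $R$. Recall that $M=tR=Rt$ with $t^{N}=0$ and $t^{N-1}\neq 0$, so that $t^{i}=0$ if and only if $i\ge N$; moreover every nonzero $x\in R$ has a well-defined valuation $v(x)$, the unique integer with $x\in M^{v(x)}\setminus M^{v(x)+1}$, and $x=t^{v(x)}w$ for some unit $w$. Note that $p=p\cdot 1_R$ lies in the center of $R$ and, being nilpotent (since $p^{c}=0$), lies in $M$; by the definition of the ramification index we have $p\in M^{e}\setminus M^{e+1}$ with $e\ge 1$.

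The first step is to record the key inequality $2e<N$, and this is exactly where the hypothesis $c>2$ enters. Since $\operatorname{Char}(R)=p^{c}$ we have $p^{c-1}\neq 0$. As $p\in M^{e}$, the power $p^{c-1}$ lies in $M^{e(c-1)}$; since $M^{i}=0$ for all $i\ge N$, the nonvanishing of $p^{c-1}$ forces $e(c-1)<N$. Because $c>2$ gives $c-1\ge 2$ and $e\ge 1$, we obtain $2e\le e(c-1)<N$, hence $2e<N$.

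Next I would turn the hypothesis $pa=0$ into a lower bound on $s:=v(a)$ (the case $a=0$ being trivial). Writing $a=t^{s}w$ and $p=t^{e}u$ with $u,w$ units, the product is $pa=t^{e}u\,t^{s}w$. Moving the unit $u$ past $t^{s}$ turns this into $t^{e+s}\cdot(\text{unit})$, so $pa=0$ holds precisely when $t^{e+s}=0$, i.e.\ when $e+s\ge N$; thus $s\ge N-e$. Applying the same computation to $a^{2}=t^{s}w\,t^{s}w=t^{2s}\cdot(\text{unit})$ shows that $a^{2}=0$ if and only if $2s\ge N$. Combining the two steps, $2s\ge 2(N-e)=2N-2e>N$ by the inequality $2e<N$, so $2s>N$ and therefore $a^{2}=0$.

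The main obstacle is purely the non-commutative bookkeeping in the product computation: I must justify that multiplying a power $t^{i}$ on either side by a unit again yields an element of valuation exactly $i$, that is $u\,t^{i}=t^{i}u'$ with $u'$ a unit. This follows from the chain-ring relations $M^{i}=t^{i}R=Rt^{i}$ together with the commuting relation $at^{i}=t^{i}a_{1}$ recorded earlier: one has $u t^{i}\in Rt^{i}=t^{i}R$, so $u t^{i}=t^{i}r$, and if $r$ were a non-unit then $t^{i}=u^{-1}(u t^{i})\in M^{i+1}$, contradicting $t^{i}\notin M^{i+1}$; hence $r$ is a unit and $v(u t^{i})=i$. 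With this in hand, products of the form $t^{i}\cdot(\text{unit})\cdot t^{j}\cdot(\text{unit})$ collapse to $t^{i+j}\cdot(\text{unit})$, the valuation is additive below $N$, and the two ``if and only if'' statements above hold as claimed. (Semi-commutativity from Lemma~\ref{togennecha} could serve the same bookkeeping purpose, but the valuation description keeps the inequality $2e<N$ transparent.)
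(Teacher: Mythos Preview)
Your proof is correct and follows essentially the same valuation argument as the paper: both write $p=t^{e}(\text{unit})$ and $a=t^{s}(\text{unit})$, extract $e+s\ge N$ from $pa=0$ and $e(c-1)<N$ from $p^{c-1}\ne 0$, and combine these to get $2s\ge N$. Your presentation isolates the inequality $2e<N$ up front and is more explicit about the non-commutative bookkeeping (moving units past powers of $t$), but the substance is the same.
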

		\begin{proof}
			Let $M=tR$ and let $e$ be the ramification index of $R$.
			Since $pa=0$, $Char(R)=p^c$ and $c>2$, we have that $a\in M$.
			If $e$ is the ramification index of $R$, then $ p=u_1t^e$ for some unit $u_1$. Further, we can argue as in the proof of Lemma~\ref{togennecha}  to find  another  three units  $u_2, u_3, u_4\in R$ and a positive integer $l$ such that, $a=t^lu_2$, $a^2= t^{2l}u_3$ and $p^{c-1}= u_4t^{ec-e}$. The proof will be finished by showing that $2l\ge N$.
			
			Then $0=pa=u_1t^{e+l}u_2 $ implies that $t^{e+l}=0$ and hence \begin{equation}\label{steq}
				l+e\ge N 
			\end{equation} 
			Since $Char(R)=p^c$,  we have $0\ne p^{c-1}=  u_4t^{ec-e}$  which implies that  
			\begin{equation}
				\label{sceq}
				e+(c-2)e=ec-e<N.	
			\end{equation}
			Comparing (\ref{steq}) and (\ref{sceq}) yields that $l>(c-2)e\ge e$ since $c>2$.
			Therefore, $2l>l+e\ge N$.  
		\end{proof}
		
		Throughout,  by $\bar{f}^{(i)}$ we denote the image of the polynomial $f=\sum_{j=0}^{n}a_jx^j\in R[x]$, in $R/ M^i[x]$, that is  $\bar{f}^{(i)}=\sum_{j=0}^{n}\bar{a_j}^{(i)}x^j$, where $\bar{a_j}^{(i)}$ is the image of 
		$a_j$ in $R/M^i$.
		
		\begin{lemma}\label{dervcond}
			Let $R$ be a finite non-commutative local ring  of $Char(R)=p^c$ with $c>1$, and let $f\in R[x]$ be a permutation polynomial on $R$. Then    
			$\bar{f}^{(i)}$ is a permutation polynomial on $R/M^i$ for $i=1,\ldots,N$.
			
		\end{lemma}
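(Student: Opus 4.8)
The plan is to exploit functoriality of right substitution under the quotient maps $\pi_i\colon R\to R/M^i$, so that the bijectivity of $[f]_R$ descends to each quotient. First I would record the two structural facts that make the quotients available: since $R$ is local, $M$ is its unique maximal \emph{two-sided} ideal, hence each power $M^i$ is a two-sided ideal and $\pi_i\colon R\to R/M^i$ is a surjective ring homomorphism onto a finite ring. This $\pi_i$ extends coefficientwise to a ring homomorphism $R[x]\to (R/M^i)[x]$ which, by the notation fixed just before the lemma, sends $f$ to $\bar{f}^{(i)}$.

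The key step is the compatibility of right substitution with $\pi_i$. Writing $f=\sum_j a_jx^j$, for every $a\in R$ one has $\pi_i(f(a))=\pi_i\big(\sum_j a_j a^j\big)=\sum_j \pi_i(a_j)\pi_i(a)^j=\bar{f}^{(i)}(\pi_i(a))$, where the middle equality uses only that $\pi_i$ preserves finite sums and products. Crucially, this argument is insensitive to non-commutativity: unlike substitution itself (which is not multiplicative, as stressed around Definition~\ref{noncprod}), a coefficient-reduction map is a genuine ring homomorphism, so the identity holds verbatim. Consequently the square with top map $[f]_R$, bottom map $[\bar{f}^{(i)}]_{R/M^i}$ and both vertical maps equal to $\pi_i$ commutes, i.e. $\pi_i\circ [f]_R=[\bar{f}^{(i)}]_{R/M^i}\circ \pi_i$ as functions $R\to R/M^i$.

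To conclude, since $f$ is a permutation polynomial on $R$, the map $[f]_R$ is a bijection of $R$, and $\pi_i$ is surjective, so the left-hand side $\pi_i\circ [f]_R$ is a surjection $R\to R/M^i$. Hence $[\bar{f}^{(i)}]_{R/M^i}\circ \pi_i$ is surjective, and because $\pi_i$ is itself surjective onto $R/M^i$ this forces $[\bar{f}^{(i)}]_{R/M^i}$ to be surjective. As $R/M^i$ is finite, a surjection of this set onto itself is a bijection, so $\bar{f}^{(i)}$ is a permutation polynomial on $R/M^i$; the argument is uniform in $i$, covering $i=1,\ldots,N$ at once (with $i=N$ giving the trivial case $R/M^N=R$).

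There is essentially no obstacle in this argument; the only points deserving a line of care are the two noted above, namely that $M^i$ is two-sided (immediate from $R$ being local) and that right substitution commutes with reduction of coefficients in spite of non-commutativity. I would also remark that the standing hypotheses $Char(R)=p^c$ with $c>1$ are not actually used here: they are inherited from the setup of this subsection and only become relevant in the lemmas that build on this one.
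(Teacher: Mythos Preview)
Your proof is correct and follows essentially the same approach as the paper's: both arguments lift an element of $R/M^i$ to $R$, use that $[f]_R$ is a bijection to find a preimage, and then reduce back, implicitly (in the paper) or explicitly (in your version) using the compatibility $\pi_i(f(a))=\bar{f}^{(i)}(\pi_i(a))$ to conclude surjectivity, hence bijectivity by finiteness. Your version is more carefully articulated---in particular you make the functoriality step and the finiteness argument explicit, and you correctly observe that the hypothesis $c>1$ plays no role here---but the underlying idea is identical.
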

		\begin{proof}
			Fix $1\le i\le N$ and let $\bar{b}\in R/M^i$. Then there exists $b\in R$ such that $b=\bar{b} \mod M^i$.
			Since $f$ is a permutation polynomial on $R$ there exists $a\in R$ such that $f(a)=b$.
			But  then we have that  $\bar{f}^{(i)} (\bar{a})=\bar{b} \mod M^i$, that is $\bar{f}^{(i)}$ is surjective on $R/M^i$.
		\end{proof}
		By definition,  a finite local ring is a $p$-group with respect to addition. Hence,   the order of each element of $R$  is a power of $p$ that divides $Char(R)$. When $R$ is a finite chain ring,  De Luis~\cite{ordelchain} obtained a formula for computing the  order of the elements of $R$.
		\begin{lemma}\cite[Proposition 2]{ordelchain}\label{orderel}
			Let $R$ be a finite chain ring of  $Char(R)=p^c$ ($c>1$),   and maximal ideal $M$  of  nilpotency index $N$. If $a\in M^i\setminus M^{i+1}$ for some $0\le i\le N-1$, then the order of 
			$a$ is $p^{\lceil\frac{N-i}{e}\rceil}$, where $e$ is the ramification index of $R$.
			
		\end{lemma}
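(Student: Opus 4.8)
The plan is to convert the computation of the additive order of $a$ into a computation of $M$-adic valuations, using two features of the situation: that $p$ is a central element and that multiplication by $p$ is literally $p$-fold addition. Write $v(x)=j$ when $x\in M^j\setminus M^{j+1}$ and set $v(0)\ge N$; this is well defined because the ideals form the chain $R\supset M\supset\cdots\supset M^N=\{0\}$. Since $M=tR=Rt$, any nonzero $x$ with $v(x)=j$ factors as $x=t^j u$ with $u$ a unit (were the cofactor in $M$ we would get $x\in M^{j+1}$), and dually $x=u't^j$.

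First I would establish additivity of $v$ below $N$: if $v(x)=j$, $v(y)=l$ and $j+l<N$, then $v(xy)=j+l$. Writing $x=t^j u$ and $y=t^l w$ with $u,w$ units, the commutation identity $u t^l=t^l u_1$ (a unit $u_1$), which is exactly the relation $a t^i=t^i a_1$ recalled before the lemma, gives $xy=t^{j+l}(u_1 w)$ with $u_1 w$ a unit; and if $j+l\ge N$ then $xy\in M^{j+l}=\{0\}$. By the definition of the ramification index $p\in M^e\setminus M^{e+1}$, so $v(p)=e$, and since $p$ is central, iterating the factorization shows $v(p^s)=se$ whenever $se<N$ and $p^s=0$ once $se\ge N$. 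Combined with $v(a)=i$ and additivity, this yields $v(p^s a)=se+i$ as long as $se+i<N$.

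The decisive observation is that the ring element $p^s a$ equals the $p^s$-fold sum $\underbrace{a+\cdots+a}_{p^{s}\text{ terms}}$, because $p=\underbrace{1_R+\cdots+1_R}_{p\text{ terms}}$; hence the additive order of $a$ is $p^{s_0}$, where $s_0$ is the least $s$ with $p^s a=0$. Now $p^s a=0$ holds precisely when $v(p^s a)\ge N$, i.e. when $se+i\ge N$, i.e. $s\ge (N-i)/e$, and the least such integer is $s_0=\lceil (N-i)/e\rceil$. Therefore $a$ has additive order $p^{\lceil (N-i)/e\rceil}$, as claimed. As a sanity check, at $i=0$ (so $a$ a unit) the formula reads $p^{\lceil N/e\rceil}$, and the bounds $(c-1)e<N\le ce$ forced by $Char(R)=p^c$ give $\lceil N/e\rceil=c$, matching the order of $1_R$.

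The main obstacle is the additivity of $v$ in the non-commutative setting, which hinges on being able to move units past powers of $t$; but the paper has already recorded the commutation identity $a t^i=t^i a_1$, and this is exactly what makes the normal form $x=t^{v(x)}\cdot(\text{unit})$ and the multiplicativity of $v$ go through. Once that structural fact is in place, everything else is routine valuation bookkeeping.
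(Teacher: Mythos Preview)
The paper does not prove this lemma; it is quoted verbatim from \cite[Proposition~2]{ordelchain} and used as a black box in the proof of Proposition~\ref{forremovelam}. So there is no ``paper's own proof'' to compare against.

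That said, your argument is correct and self-contained. The key technical point---that the $M$-adic valuation $v$ is additive on products, i.e.\ $v(xy)=v(x)+v(y)$ whenever the sum is below $N$---is exactly what requires the non-commutative commutation identity $u\,t^l=t^l u_1$ with $u_1$ a unit, and you handle this properly. Once $v$ is additive and $v(p)=e$, the translation of ``additive order of $a$'' into ``least $s$ with $v(p^s a)\ge N$'' is immediate, and the ceiling formula drops out. Your sanity check at $i=0$ (recovering $\mathrm{ord}(1_R)=p^c$ from $(c-1)e<N\le ce$) is a nice confirmation that the bounds on the ramification index are consistent with the characteristic. One small point you leave implicit: when you pass from $u\,t^l=t^l u_1$ to ``$u_1$ is a unit,'' this follows because if $u_1\in M$ then $u\,t^l\in M^{l+1}=R\,t^{l+1}$, forcing $(u-rt)t^l=0$ for some $r$, and since $u-rt$ is a unit this gives $t^l=0$, contradicting $l<N$. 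This is routine but worth stating explicitly since it is where the chain-ring structure does the work.
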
 
		\begin{proposition}\label{forremovelam}
			Let $R$ be a finite non-commutative chain ring  of $Char(R)=p^c$ with $c>1$, and let $f\in R[x]$ be a permutation polynomial on $R$. Then   the following statements hold:
			\begin{enumerate}
				\item $f'(a)\ne 0 \mod M$ for every $a\in R$;
				\item $[\lambda_{ f}(y,z)]$ is a local permutation in $z$.
			\end{enumerate}
		\end{proposition}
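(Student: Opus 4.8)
The plan is to prove (1) first and then read off (2) from it, because the two are tightly linked. Writing $\bar g$ for the reduction of $g\in R[x]$ modulo $M$, the map $z\mapsto \lambda_{f}(a,z)$ is additive (Fact~\ref{lamdapro}) and carries $M^i$ into $M^i$ for every $i$, so it respects the filtration $R\supseteq M\supseteq\cdots\supseteq M^{N}=0$; consequently it is bijective exactly when each induced map $\mathrm{gr}_i\colon M^i/M^{i+1}\to M^i/M^{i+1}$ is bijective. On the bottom piece $M^0/M^1=\mathbb{F}_q$ everything commutes modulo $M$, so $\overline{\lambda_{f}(a,z)}=\sum_j \bar a_j\, j\,\bar a^{\,j-1}\bar z=\overline{f'(a)}\,\bar z$, i.e. $\mathrm{gr}_0$ is multiplication by $\overline{f'(a)}$, which is bijective precisely when $f'(a)\notin M$. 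Thus (1) is the $i=0$ instance of (2), and the genuinely new content sits on this bottom piece (over a field the condition fails, witnessed by $x^q$, so the hypothesis $c>1$ must be used essentially).

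For (1) the key device is the central square-zero element $p^{c-1}$. Since $Char(R)=p^{c}$ with $c>1$ we have $p^{c-1}\neq 0$ and $(p^{c-1})^2=p^{2c-2}=0$; moreover $p\in M^{e}\setminus M^{e+1}$ forces $p^{c-1}\in M^{e(c-1)}\setminus M^{e(c-1)+1}$ with $e(c-1)<N$, so the left annihilator of $p^{c-1}$ is exactly $M^{d}$ with $d:=N-e(c-1)\geq 1$. Because $p^{c-1}$ is central, Definition~\ref{onevarsub} gives directly $\lambda_{f}(a,z\,p^{c-1})=\lambda_{f}(a,z)\,p^{c-1}$ for every $z\in R$. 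I would fix $a$ and let $z$ range over $R$: whenever $z\notin M^{d}$ the element $m:=z\,p^{c-1}$ is nonzero and satisfies $m^2=0$, so Lemma~\ref{togennecha}(2) yields $f(a+m)=f(a)+\lambda_{f}(a,m)$, and injectivity of $f$ forces $\lambda_{f}(a,z)\,p^{c-1}=\lambda_{f}(a,m)\neq 0$, that is $\lambda_{f}(a,z)\notin M^{d}$.

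Hence $z\mapsto\lambda_{f}(a,z)$ sends $R\setminus M^{d}$ into $R\setminus M^{d}$, so its reduction $\bar\lambda\colon R/M^{d}\to R/M^{d}$ has trivial kernel and therefore, by finiteness, is bijective; as $\bar\lambda$ preserves $M/M^{d}$ it induces a bijection on $R/M=\mathbb{F}_q$, which is multiplication by $\overline{f'(a)}$ by the computation above, whence $\overline{f'(a)}\neq 0$ and $f'(a)\notin M$. This is the step I expect to be the main obstacle: one cannot in general exhibit a single square-zero element annihilated by $f'(a)$ and commuting with a given unit $a$ (the ramification and residue-field twist of the chain ring obstruct producing a high-valuation element commuting with $a$), and the trick is to test all multiples $z\,p^{c-1}$ simultaneously, obtaining injectivity \emph{modulo the annihilator} $M^{d}$ and only afterwards pushing down to $M$.

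For (2) it then remains to check that $\mathrm{gr}_i$ is bijective for $1\le i\le N-1$. By Lemma~\ref{dervcond}, $\bar f^{(i)}$ and $\bar f^{(i+1)}$ are permutations of $R/M^{i}$ and $R/M^{i+1}$; for $m\in M^{i}$ the terms of $f(a+m)$ involving two or more factors $m$ lie in $M^{2i}\subseteq M^{i+1}$, so $f(a+m)\equiv f(a)+\lambda_{f}(a,m)\pmod{M^{i+1}}$. Given $n\in M^{i}$, bijectivity of $\bar f^{(i)}$ makes the $\bar f^{(i+1)}$-preimage of $\overline{f(a)+n}$ congruent to $a$ modulo $M^{i}$, hence equal to $a+m$ with $m\in M^{i}$, and then $\lambda_{f}(a,m)\equiv n\pmod{M^{i+1}}$; so $\mathrm{gr}_i$ is surjective and therefore bijective. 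Combining this with $\mathrm{gr}_0$ bijective from (1), the filtration criterion shows $z\mapsto\lambda_{f}(a,z)$ is bijective for every $a$, i.e. $[\lambda_{f}(y,z)]$ is a local permutation in $z$.
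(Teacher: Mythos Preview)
Your proof is correct and follows a genuinely different route from the paper's.

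For part~(1), the paper applies Lemma~\ref{calfor2z} with the single central element $m=p^{c-1}$ to obtain $f(a+p^{c-1})=f(a)+f'(a)p^{c-1}$, and then, assuming $f'(a)\in M$, invokes the order formula of Lemma~\ref{orderel} to locate $f'(a)$ and $p^{c-1}$ in the $M$-adic filtration and derive a contradiction with Lemma~\ref{dervcond}. You instead test \emph{all} multiples $z\,p^{c-1}$ via Lemma~\ref{togennecha}(2), conclude that $z\mapsto\lambda_f(a,z)$ preserves $R\setminus M^{d}$ (with $d=N-e(c-1)$), and push this down through the filtration to $R/M$. Your route avoids the order-of-element computation entirely.

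For part~(2), the paper argues by contradiction: assuming $\lambda_f(a,b)=0$ for some $b\neq 0$, it splits into the cases $pb\neq 0$ versus $pb=0$, and within the latter into $c>2$ (using Lemma~\ref{casec>2}) versus $c=2$ (passing to $R/M^{s+1}$), in each case manufacturing a square-zero element to which Lemma~\ref{togennecha} applies. Your filtration argument is uniform: you show each graded piece $\mathrm{gr}_i\colon M^i/M^{i+1}\to M^i/M^{i+1}$ is bijective, with $\mathrm{gr}_0$ handled by~(1) and $\mathrm{gr}_i$ for $i\ge 1$ by the Taylor expansion modulo $M^{i+1}$ combined with Lemma~\ref{dervcond}. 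This eliminates the case analysis on $c$ and on $pb$, and has the pleasant side effect of making transparent that the hypothesis $c>1$ enters only through $\mathrm{gr}_0$: your $\mathrm{gr}_i$ argument for $i\ge 1$ is valid in any finite chain ring. One small point worth making explicit is that terms in $(a+m)^j$ with at least two factors of $m\in M^i$ lie in $M^{2i}$; this uses the chain-ring identity $M^iRM^i\subseteq M^{2i}$ (a consequence of $Rt=tR$, as in the proof of Lemma~\ref{togennecha}(1)), which you invoke tacitly.
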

		\begin{proof}
			(1)   Assume to the contrary that $f'(a)= 0 \mod M$ for some $a\in R$.  Since $c>1$, $0\ne p^{c-1}\in C(R)$  with $p^{2({c-1})}=0$.  Hence, by Lemma~\ref{calfor2z},
			\begin{equation}\label{fornoninj}
				f(a+p^{c-1})=f(a) +f'(a)p^{c-1}.
			\end{equation} 
			Now, if $f'(a)p^{c-1}=0$ then $f(a+p^{c-1})=f(a)$ which contradicts the fact that $f$ is a permutation
			polynomial on $R$. So, we may assume that $f'(a)p^{c-1}=p^{c-1}f'(a)\ne 0$, thus the order of $f'(a)$ with respect to  addition is $p^c$. Apparently,   the element $p^{c-1}=p^{c-1}.1_R$ has order $p$ with respect to addition. Hence, if $1\le i_1,i_2<N$ such that $f'(a)\in M^{i_1}\setminus M^{i_1+1}$  and  $p^{c-1}\in M^{i_2}\setminus M^{i_2+1}$, then by Lemma~\ref{orderel}, $c=\lceil \frac{N-i_1}{e}\rceil$ and  $1=\lceil \frac{N-i_2}{e}\rceil$. Thus, since    the least integer function is increasing  and  $c>1$, $\frac{N-i_1}{e}> \frac{N-i_2}{e}$, whence $i_1<i_2$.
			Therefore, Equation ~(\ref{fornoninj}) becomes \[f(a+p^{c-1})=f(a) \mod M^{i_2+1}. \] 
			But $p^{c-1}\ne 0 \mod M^{i_2+1}$, and whence $\bar{f}^{(i_2+1)}$ is not a permutation polynomial on 
			$R/M^{i_2+1} $ which contradicts  Lemma~\ref{dervcond}. Therefore,  there exists no $a\in R$ such that 
			$f'(a)=0 \mod M$.
			
			(2) Assume to the contrary that there exists $a\in R$ such that $[\lambda_{ f}(a,z)]$ is not a permutation of $R$. So, there exist $b_1,b_2\in R$ such that
			$\lambda_{ f}(a,b_1)=\lambda_{ f}(a,b_2)$ and $b_1\ne b_2$.
			Thus, by Fact~\ref{lamdapro}
			$$\lambda_{ f}(a,b )=0, \text{ where }   b=b_1- b_2\ne 0.$$ Now, either $pb\ne0$ or $pb=0$.
			First, consider the case where $pb\ne0$, and let   $l$ be the largest integer such that $p^lb\ne 0$. Then, we can apply Lemma~\ref{togennecha} by taking $m=p^lb$ together with the fact that $p^l\in C(R)$, to obtain 
			\[f(a+p^lb)=f(a)+\lambda_{ f}(a,p^lb)=f(a)+p^l\lambda_{ f}(a,b)=f(a),\] which contradicts  the fact that $f$ is a permutation polynomial on $R$.
			
			Second, consider the case  $pb=0$. Here, we discuss two cases on the number $c>1$.
			We begin with the case $c>2$. Then $b^2=0$ by Lemma~\ref{casec>2}. So,   by applying Lemma~\ref{togennecha}, we see that
			\[f(a+b)=f(a)+\lambda_{ f}(a,b)=f(a).\] Again we reach a contradiction.
			Finally,  assume that $c=2$. Without loss of generality, we can  suppose that $b^2 \ne 0$, since otherwise we can argue as in the
			case $c>2$. So, $b\in  M^s\setminus M^{s+1}$ for some $1\le s <N-1$. Thus, $b^2= 0 \mod M^{s+1}$.
			Therefore, we can apply  Lemma~\ref{togennecha} in the chain ring $R/ M^{s+1}$ to find that,
			\[f(a+b)=f(a)+\lambda_{ f}(a,b)  \mod  M^{s+1}=f(a) \mod  M^{s+1}.\]
			But  $b\ne 0 \mod M^{s+1}$, and thus $\bar{f}^{(s+1)}$ is not 	a permutation polynomial on $R/ M^{s+1}$ which again conflicts with Lemma~\ref{dervcond}. So, the assumption $[\lambda_{ f}(a,z)]$
			is not a permutation of $R$ for some $a\in R$ leads to a contradiction. Therefore, there is no such $a$ and $[\lambda_{ f}(y,z)]$ is a local permutation in $z$.
		\end{proof}
		We are now able to remove the condition on the assigned polynomial  in Theorem~\ref{Cherper} for the class of finite rings with $Char(R)\ne p$.
		\begin{theorem}\label{remlamd} 
			Let  $R$ be a finite chain ring of characteristic $p^c$ ($c>1$). Let $f =f_0+ \sum\limits_{i=1}^{k}f_i \alfa_i$, 
			where $f_0,\ldots,f_k \in R[x]$. Then 
			the following statements are equivalent:
			
			\begin{enumerate}
				\item 	$f$ is a permutation polynomial
				on $R_k$;
				\item 	$f_0$ is a permutation polynomial
				on $R_k$;
				
				\item $f_0$ is a permutation polynomial on $R$.
			\end{enumerate}
		\end{theorem}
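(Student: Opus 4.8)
The plan is to obtain Theorem~\ref{remlamd} as a short assembly of two results already in hand: Theorem~\ref{Cherper}, which reduces the permutation question on $R_k$ entirely to conditions on the pure part $f_0$ and its assigned polynomial $\lambda_{f_0}$, and Proposition~\ref{forremovelam}, which shows that over a finite chain ring of characteristic $p^c$ with $c>1$ the condition on $\lambda_{f_0}$ is a free consequence of $f_0$ being a permutation polynomial on $R$. The only genuinely new statement is therefore that item (3) here---just \emph{``$f_0$ is a permutation polynomial on $R$''}---suffices to recover the stronger hypotheses used in Theorem~\ref{Cherper}.

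I would first dispose of the implications that cost nothing. The equivalence of (1) and (2) is precisely the equivalence of items (1) and (2) of Theorem~\ref{Cherper}, which holds over any finite non-commutative ring and so applies without change. For (2)$\Rightarrow$(3), the implication (2)$\Rightarrow$(3) of Theorem~\ref{Cherper} already yields, in particular, that $f_0$ is a permutation polynomial on $R$; alternatively, since $R$ embeds as a subring of $R_k$ and $f_0\in R[x]$, the bijection $[f_0]_{R_k}$ restricts to the injection $[f_0]_R$ of the finite set $R$, which is therefore a bijection.

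The substance is in proving (3)$\Rightarrow$(1). Assuming $f_0$ is a permutation polynomial on $R$, Proposition~\ref{forremovelam}(2) applies (using $c>1$ and the chain-ring structure) to give that $[\lambda_{f_0}(y,z)]$ is a local permutation in the variable $z$. Combined with the hypothesis that $f_0$ permutes $R$, this is exactly item (5) of Theorem~\ref{Cherper}, and invoking that theorem returns both (1) and (2). The main obstacle is thus not in this theorem but was already absorbed into Proposition~\ref{forremovelam}, whose proof rests on the semi-commutativity of chain rings (Lemma~\ref{togennecha}), the order formula for elements (Lemma~\ref{orderel}), and the case split on $c$. Given that proposition, the condition on $\lambda_{f_0}$ becomes redundant, and I would close by noting that this answers Remark and Question~\ref{isitredundant}(\ref{qos}) affirmatively for finite chain rings with $c>1$.
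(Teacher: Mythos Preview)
Your proof is correct and follows essentially the same route as the paper: the equivalence (1)$\Leftrightarrow$(2) via Theorem~\ref{Cherper}, the implication (2)$\Rightarrow$(3) (the paper cites Lemma~\ref{perpurcof} directly rather than going through Theorem~\ref{Cherper}, but this is the same content), and (3)$\Rightarrow$(2) via Proposition~\ref{forremovelam}(2) feeding into Theorem~\ref{Cherper}. The only added content in your write-up is the closing remark about Remark and Question~\ref{isitredundant}(\ref{qos}), which is accurate commentary but not part of the paper's proof.
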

		\begin{proof}
			We have already seen in Theorem~\ref{Cherper} that (1) and (2) are equivalent.
			Also, by Lemma~\ref{perpurcof}, (2) implies (3). Now, suppose $f_0$ is a permutation polynomial
			on $R$. Then, by Proposition~\ref{forremovelam}, $[\lambda_{ f_0}(y,z)]$ is a local permutation in $z$. Thus, this together with the fact that $f_0$ is a permutation polynomial on $R$ implies that $f_0$ is a permutation polynomial on $R_k$. 
		\end{proof}
		\begin{corollary}\label{nullandlocal}
			Let $R$ be a finite chain ring of characteristic $p^c$ ($c>1$) and let $f\in R[x]$ be a permutation polynomial on $R$. Then
			$[\lambda_{f+g} (y,z)] $ is a local  permutation    in the variable $z$ for every $g\in \Null$.
		\end{corollary}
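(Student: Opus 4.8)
The plan is to avoid computing with $\lambda_{f+g}$ directly and instead to observe that adding a null polynomial changes neither the induced function nor, therefore, the permutation property, so that Proposition~\ref{forremovelam} can simply be re-applied to $f+g$ in place of $f$.

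First I would recall from Definition~\ref{nulpol} that $g\in\Null$ means exactly $g(r)=0$ for every $r\in R$. Consequently, for every $r\in R$ we have $(f+g)(r)=f(r)+g(r)=f(r)$, so $f+g$ and $f$ induce the very same function on $R$. Since $f$ is assumed to be a permutation polynomial on $R$, the function induced by $f+g$ is this same bijection, whence $f+g$ is itself a permutation polynomial on $R$. Note also that $f+g\in R[x]$, so $f+g$ is a legitimate polynomial over the chain ring $R$ to which the earlier results apply.

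Now I would invoke Proposition~\ref{forremovelam}(2): applied to the permutation polynomial $f+g$ on the finite chain ring $R$ of characteristic $p^c$ with $c>1$, it yields at once that $[\lambda_{f+g}(y,z)]$ is a local permutation in the variable $z$, which is precisely the desired conclusion.

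I expect there to be no serious obstacle here, as the statement is a direct corollary; the only conceptual point worth flagging is why one should \emph{not} try the seemingly natural route through additivity. By Fact~\ref{lamdapro} one has $\lambda_{f+g}=\lambda_f+\lambda_g$, and Proposition~\ref{forremovelam} already tells us $[\lambda_f(y,z)]$ is a local permutation in $z$; but this decomposition is unhelpful, since $\lambda_g$ need not vanish (nor be otherwise controllable) even when $g$ is null on $R$ (compare Example~\ref{nulldnotAnull}), so there is no evident reason for the sum $\lambda_f+\lambda_g$ to remain a local permutation. The correct move is thus to keep $f+g$ intact and use that it is a permutation polynomial, letting Proposition~\ref{forremovelam} do the work.
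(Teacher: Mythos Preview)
Your proof is correct and follows essentially the same approach as the paper: observe that $f+g$ induces the same function on $R$ as $f$, hence is again a permutation polynomial on $R$, and then invoke the machinery for chain rings of characteristic $p^c$ with $c>1$. The only cosmetic difference is that the paper routes the conclusion through Theorem~\ref{remlamd} followed by Theorem~\ref{Cherper}, whereas you cite Proposition~\ref{forremovelam}(2) directly --- which is slightly more economical, since Theorem~\ref{remlamd} itself rests on that proposition.
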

		\begin{proof}
			Let $f\in R[x]$ be a permutation polynomial on $R$ and let $g\in \Null$. Then $f+g$ is a permutation polynomial on $R$ since $[f+g]_R=[f]_R$. Thus, by  Theorem~\ref{remlamd}, $f+g$ is a permutation polynomial on $R_k$. Therefore, $[\lambda_{f+g} (y,z)] $ is a local  permutation   in the variable $z$ by Theorem~\ref{Cherper}.
		\end{proof}
		\begin{corollary}\label{nulzerosum}
			Let $R$ be a finite chain ring of characteristic $p^c$ ($c>1$), $a\in R$ and   $g \in \Null$. Then
			\begin{enumerate}
				\item $ \sum\limits_{b\in R} \lambda_{g} (a,b)=0$;
				\item $ \sum\limits_{b,c\in R}\lambda_{g} (c,b)=0$.
			\end{enumerate}
		\end{corollary}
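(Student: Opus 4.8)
The plan is to leverage the additivity of the map $f\mapsto\lambda_{f}$ (Fact~\ref{lamdapro}) together with the fact that $x$ itself is a permutation polynomial whose assigned polynomial is simply the second variable. The whole argument reduces to comparing two ``total sums'' over $R$.

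First I would fix $a\in R$ and record two such sums. Writing $S=\sum_{b\in R}b$ for the sum of all elements of the additive group $(R,+)$, Fact~\ref{lamdapro} gives $\lambda_{x}(a,b)=b$, so $\sum_{b\in R}\lambda_{x}(a,b)=S$. Next, since $g\in\Null$ we have $(x+g)(r)=r$ for every $r\in R$, so $x+g$ is a permutation polynomial on $R$; hence Corollary~\ref{nullandlocal}, applied with $f=x$, tells us that $[\lambda_{x+g}(y,z)]$ is a local permutation in $z$, that is, $b\mapsto\lambda_{x+g}(a,b)$ is a bijection of $R$. Therefore, as $b$ ranges over $R$ the values $\lambda_{x+g}(a,b)$ range over all of $R$ exactly once, and so $\sum_{b\in R}\lambda_{x+g}(a,b)=S$ as well.

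Part (1) then follows by comparing these sums through linearity. By Fact~\ref{lamdapro} we have $\lambda_{x+g}=\lambda_{x}+\lambda_{g}$, whence
\[
S=\sum_{b\in R}\lambda_{x+g}(a,b)=\sum_{b\in R}\lambda_{x}(a,b)+\sum_{b\in R}\lambda_{g}(a,b)=S+\sum_{b\in R}\lambda_{g}(a,b),
\]
which forces $\sum_{b\in R}\lambda_{g}(a,b)=0$. Part (2) is then immediate: summing the identity of part (1) over all $a\in R$ yields $\sum_{a,b\in R}\lambda_{g}(a,b)=\sum_{a\in R}0=0$.

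I do not expect a genuine obstacle here, since the real content has already been established upstream: the substantive step is the claim that adding a null polynomial to $x$ preserves the local-permutation property of the assigned polynomial, which is exactly Corollary~\ref{nullandlocal} (itself resting on Proposition~\ref{forremovelam} and the chain-ring lemmas). The only thing to check carefully is that the hypotheses of Corollary~\ref{nullandlocal} are met — that $f=x$ is a permutation polynomial on $R$ (clear) and that $g\in\Null$ (given) — after which the corollary is pure bookkeeping via the additivity in Fact~\ref{lamdapro}.
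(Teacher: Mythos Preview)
Your proof is correct and follows essentially the same approach as the paper: fix $a$, apply Corollary~\ref{nullandlocal} with $f=x$ to see that $b\mapsto\lambda_{x+g}(a,b)$ is a bijection of $R$, and then use the additivity $\lambda_{x+g}=\lambda_{x}+\lambda_{g}$ from Fact~\ref{lamdapro} to cancel the common sum $S=\sum_{b\in R}b$ and obtain $\sum_{b\in R}\lambda_{g}(a,b)=0$. The paper's proof is nearly identical, differing only cosmetically in that it phrases the equality of the two totals as both sets $\{\lambda_{x}(a,b):b\in R\}$ and $\{\lambda_{x+g}(a,b):b\in R\}$ being all of $R$.
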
 
		\begin{proof}
			Fix an element $a\in R$ and let $g\in \Null$ be arbitrary.
			Then	applying Corollary~\ref{nullandlocal} to the  permutation polynomial $f=x$   implies that  $[\lambda_{f} (a,z)] $ and $[\lambda_{f+g} (a,z)] $ are    permutations on $R$. Hence,
			\[R=\{\lambda_{f+g} (a,b)\mid b\in R \}=\{\lambda_{f} (a,b)\mid b\in R \}.\]
			Therefore, \[\sum\limits_{b\in R} b=  \sum\limits_{b\in R}\lambda_{f+g} (a,b)=\sum\limits_{b\in R}\lambda_{f} (a,b),\]
			whence by Fact~\ref{lamdapro}, \[  \sum\limits_{b\in R}\lambda_{f} (a,b)+\sum\limits_{b\in R}\lambda_{g} (a,b)=\sum\limits_{b\in R}\lambda_{f} (a,b).\]
			Thus, \[\sum\limits_{b\in R} \lambda_{g} (a,b)=0.\]
			This proves (1). Now (2) follows from (1).
		\end{proof}
		In general we have the following result.
		\begin{proposition}
			Let $R$ be a finite non-commutative ring, $a\in R$ and $g\in \Null$. Suppose that there exists a  polynomial   $f\in R[x]$  such that $f$ and $f+g$ are permutation polynomials on $R_k$. Then \begin{enumerate}
				\item $ \sum\limits_{b\in R} \lambda_{g} (a,b)=0$;
				\item $ \sum\limits_{b,c\in R}\lambda_{g} (c,b)=0$.
			\end{enumerate}
		\end{proposition}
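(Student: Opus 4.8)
The plan is to mirror the proof of Corollary~\ref{nulzerosum}, simply replacing the chain-ring input (Corollary~\ref{nullandlocal}) by the hypothesis itself together with Theorem~\ref{Cherper}. The sole purpose of that earlier corollary was to guarantee that both $[\lambda_f(a,z)]$ and $[\lambda_{f+g}(a,z)]$ are permutations of $R$; in the present statement this is handed to us directly by assuming that $f$ and $f+g$ are permutation polynomials on $R_k$.

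First I would invoke Theorem~\ref{Cherper}. Since $f\in R[x]$, its pure part (in the sense of Definition~\ref{purpol}) is $f$ itself, so the assertion that $f$ is a permutation polynomial on $R_k$ is equivalent, by the equivalence of statements (1) and (3) there, to $f$ being a permutation polynomial on $R$ together with $[\lambda_f(a',z)]$ being surjective for every $a'\in R$. As $R$ is finite, surjectivity forces bijectivity, so in particular $[\lambda_f(a,z)]$ is a permutation of $R$ for the fixed $a$ in the statement. The identical reasoning applied to $f+g$, whose pure part is $f+g$, shows that $[\lambda_{f+g}(a,z)]$ is also a permutation of $R$.

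Next, using these bijectivities, both $\{\lambda_f(a,b)\mid b\in R\}$ and $\{\lambda_{f+g}(a,b)\mid b\in R\}$ enumerate all of $R$, whence
\[\sum_{b\in R}\lambda_f(a,b)=\sum_{b\in R} b=\sum_{b\in R}\lambda_{f+g}(a,b).\]
By Fact~\ref{lamdapro}(1) the assignment $h\mapsto\lambda_h$ is additive, so $\lambda_{f+g}=\lambda_f+\lambda_g$ and the rightmost sum splits as $\sum_{b\in R}\lambda_f(a,b)+\sum_{b\in R}\lambda_g(a,b)$. Cancelling the common term $\sum_{b\in R}\lambda_f(a,b)$ then yields $\sum_{b\in R}\lambda_g(a,b)=0$, which is assertion (1). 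For assertion (2) I would simply sum this identity over all $a\in R$, obtaining $\sum_{a,b\in R}\lambda_g(a,b)=\sum_{a\in R}0=0$.

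I expect no genuine obstacle here; the argument is a direct transcription. The only points warranting care are, first, that the permutation-polynomial hypothesis on $R_k$ is a global condition (the local-permutation clause of Theorem~\ref{Cherper} holds for every element of $R$), so it applies in particular to the fixed $a$; and second, that the hypothesis $g\in\Null$ is not invoked beyond making the statement non-vacuous: because $f+g\equiv f$ on $R$, the condition that $f+g$ be a permutation polynomial on $R$ automatically coincides with the same condition on $f$, so the substantive content of the hypothesis lies entirely in the local-permutation requirement on $\lambda_{f+g}$.
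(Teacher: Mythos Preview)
Your proposal is correct and follows essentially the same route as the paper: invoke Theorem~\ref{Cherper} to deduce that $[\lambda_f(a,z)]$ and $[\lambda_{f+g}(a,z)]$ are permutations of $R$, then run the summation argument of Corollary~\ref{nulzerosum} verbatim. The paper's own proof says precisely this, deferring the computation back to Corollary~\ref{nulzerosum}; you have simply spelled that computation out explicitly.
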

		\begin{proof}
			Fix $a\in R$ and fix $g\in \Null$. Suppose the existence of a polynomial $f\in R[x]$ such that $f$ and $f+g$ are permutation polynomials on $R_k$. Hence, by Theorem~\ref{Cherper},  $[\lambda_{f} (a,z)] $ and $[\lambda_{f+g} (a,z)] $ are    permutations on $R$. Then the same argument used in the proof of Corollary~\ref{nulzerosum} works here as well.	
		\end{proof}
		\section{The group of pure polynomial permutations}\label{6}
		In this section, we consider a subgroup of $ \overline{\PrPol[R_k]}$ whose elements  are generated by polynomials over  $R$.  We call this group the group of pure polynomial permutations. Also, for the commutative case, we show this group is the complement of the group $\mathcal{ P}_{x\,,k}$ (see Proposition~\ref{nicegroups} and Remark~\ref{hascomp}) in the group of polynomial permutations. This  will allow us to pose a similar question for the non-commutative case.  
		Finally, in a subsection, we consider a subgroup
		of $\overline{\PrPol[R_k]}$ which  maps every element of $R$ to itself.
		
		Recall that from Corollary~\ref{clospolyf} that  for a finite non-commutative ring $R$ the closure $ \overline{\PrPol[R_k]}$ is a group. Therefore, in view of Fact~\ref{closis},  the closure $ \overline{B}$ is a subgroup of  $ \overline{\PrPol[R_k]}$ for every non-empty  subset $B\subseteq   \overline{\PrPol[R_k]}$. We use this fact implicitly in the rest of the paper. 
		\begin{Notation}\label{puregroup}
			
			For $j\ge1$ let	$$
			\mathcal{P}_R(R_j)=\{F\in \PrPol[R_j]\mid  F=[f]_{R_j} \textnormal{ for some }
			f\in R[x]\}
			.$$
		\end{Notation}
		Let $F_1,F_2\in \mathcal{P}_R(R_k)$ such that $F_1\ne F_2$. Then, by definition, $F_1=[f]_{R_k}$ and $F_2=[g]_{R_k}$ for some $f,g\in R[x]$. Furthermore, by Corollary~\ref{CHSGencount}, $[f]_R\ne [g]_{R}$ or $[\lambda_{ f}(y,z)]\ne [\lambda_{ g}(y,z)]$ since $F_1\ne F_2$. Thus,  
		one can see that the number $L$ defined in Proposition~\ref{CHSGeperncount} is the cardinality of the set $\mathcal{P}_R(R_k)$, and therefore, because of Theorem~\ref{Cherper}, the following result  holds.
		
		\begin{corollary}\label{Ldiscrip}
			Let $R$ be a finite non-commutative  Ring. Then
			\begin{align*}
				\left|\mathcal{P}_R(R_k)\right| &=\left|\{([f]_R,[\lambda_f(y,z)])\mid 
				f\in R[x], [f]_R\in \PrPol \text{ and } 
				[\lambda_f(y,z)] \text{ is a local permutation in } z\}\right|\\
				&	=\left|\{([f]_R,[\lambda_f(y,z)])\mid 
				f\in R[x], [f]_{R_k}\in \PrPol[R_k]\}\right|.
			\end{align*}
		\end{corollary}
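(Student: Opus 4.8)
The plan is to exhibit an explicit bijection between $\mathcal{P}_R(R_k)$ and the pair-set on the right, via the assignment that records, for a pure polynomial permutation, both its induced function on $R$ and the function induced by its assigned polynomial. Concretely, I would consider
\[
\Phi\colon \mathcal{P}_R(R_k)\longrightarrow \{([f]_R,[\lambda_f(y,z)])\mid f\in R[x]\},\qquad \Phi([f]_{R_k})=([f]_R,[\lambda_f(y,z)]).
\]
The decisive observation is that $\Phi$ is simultaneously well defined and injective, and both properties are read off from the single equivalence of Corollary~\ref{CHSGencount} specialized to pure polynomials $f,g\in R[x]$ (so that $f_0=f$ and the $\alfa_i$-components vanish): for such $f$ and $g$,
\[
[f]_{R_k}=[g]_{R_k}\iff [f]_R=[g]_R \text{ and } [\lambda_f(y,z)]=[\lambda_g(y,z)].
\]
Reading this biconditional from left to right gives well-definedness of $\Phi$, and from right to left gives its injectivity; hence $\Phi$ is a bijection of $\mathcal{P}_R(R_k)$ onto its image.

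Next I would identify this image. By Notation~\ref{puregroup}, the elements of $\mathcal{P}_R(R_k)$ are exactly the classes $[f]_{R_k}$ with $f\in R[x]$ and $[f]_{R_k}\in\PrPol[R_k]$, so the image of $\Phi$ is precisely
\[
\{([f]_R,[\lambda_f(y,z)])\mid f\in R[x],\ [f]_{R_k}\in\PrPol[R_k]\},
\]
which is the second set appearing in the statement. Therefore $|\mathcal{P}_R(R_k)|$ equals the cardinality of this set, which is the second displayed equality.

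Finally I would reconcile the two descriptions of the pair-set by invoking Theorem~\ref{Cherper}. For $f\in R[x]$ the pure part of $f$ is $f$ itself, and the theorem asserts that $[f]_{R_k}\in\PrPol[R_k]$ if and only if $[f]_R\in\PrPol$ and $[\lambda_f(y,z)]$ is a local permutation in the variable $z$. Substituting this equivalent predicate into the defining condition shows that the two pair-sets coincide as sets, hence have equal cardinality; this yields the first equality and matches the number $L$ of Proposition~\ref{CHSGeperncount}. I do not anticipate a genuine obstacle here; the one point to handle with care is to apply Corollary~\ref{CHSGencount} in the pure case, where all $\alfa_i$-components are zero, so that the pair $([f]_R,[\lambda_f(y,z)])$ --- rather than a longer tuple of functions --- is the complete invariant distinguishing the classes in $\mathcal{P}_R(R_k)$.
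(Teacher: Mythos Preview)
Your proof is correct and follows essentially the same approach as the paper. The paper's argument (contained in the paragraph preceding the corollary) uses Corollary~\ref{CHSGencount} to see that distinct elements of $\mathcal{P}_R(R_k)$ yield distinct pairs $([f]_R,[\lambda_f(y,z)])$, and then invokes Theorem~\ref{Cherper} to equate the two descriptions of the pair-set; you have simply made the implicit bijection $\Phi$ explicit and spelled out both directions of the biconditional in Corollary~\ref{CHSGencount}.
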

		In the case $R$ is a finite chain ring of characteristic $p^c$ with $c>1$ by means of Theorem~\ref{remlamd}, we have the following
		\begin{corollary} Let $R$ be a finite chain ring with $Char(R)=p^c$ ($c>1$). Then
			\[	\left|\mathcal{P}_R(R_k)\right|  	=\left|\{([f]_R,[\lambda(y,z)])\mid 
			f\in R[x], [f]_{R}\in \PrPol[R]\}\right|.\]
		\end{corollary}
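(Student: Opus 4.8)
The plan is to obtain this formula immediately from two facts that are already available: the combinatorial description of $\left|\mathcal{P}_R(R_k)\right|$ in Corollary~\ref{Ldiscrip}, and the redundancy of the $\lambda$-condition in the chain-ring setting, which is exactly Theorem~\ref{remlamd} (resting on Proposition~\ref{forremovelam}). First I would recall the second line of Corollary~\ref{Ldiscrip}, which gives
\[
\left|\mathcal{P}_R(R_k)\right|=\left|\{([f]_R,[\lambda_f(y,z)])\mid f\in R[x],\ [f]_{R_k}\in \PrPol[R_k]\}\right|.
\]
Since every such $f$ lies in $R[x]$, it equals its own pure part $f_0=f$, so Theorem~\ref{remlamd} applies and tells us that $[f]_{R_k}\in\PrPol[R_k]$ if and only if $[f]_R\in\PrPol[R]$.

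The single step is then to substitute this equivalence into the index set. Because the defining predicate ``$[f]_{R_k}\in\PrPol[R_k]$'' and the predicate ``$[f]_R\in\PrPol[R]$'' select the same polynomials $f\in R[x]$, the two sets of pairs
\[
\{([f]_R,[\lambda_f(y,z)])\mid f\in R[x],\ [f]_{R_k}\in\PrPol[R_k]\}
\quad\text{and}\quad
\{([f]_R,[\lambda_f(y,z)])\mid f\in R[x],\ [f]_R\in\PrPol[R]\}
\]
are literally equal, hence have equal cardinality. Combining with the displayed equality from Corollary~\ref{Ldiscrip} yields the claimed formula. Equivalently, one may argue from the first description in Corollary~\ref{Ldiscrip}, where the index condition also requires $[\lambda_f(y,z)]$ to be a local permutation in $z$: by Proposition~\ref{forremovelam}(2), for a chain ring with $Char(R)=p^c$ and $c>1$ that extra requirement is automatic once $[f]_R\in\PrPol[R]$, so it may be deleted, giving the same conclusion.

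I do not expect a genuine obstacle here: the corollary is pure bookkeeping once Theorem~\ref{remlamd} is in hand, and all the real work sits in Proposition~\ref{forremovelam}, whose proof already exploits Lemma~\ref{togennecha}, Lemma~\ref{calfor2z} and the order computation of Lemma~\ref{orderel}. The only point worth stating explicitly is that we are asserting an actual equality of sets of pairs (not merely an inequality of cardinalities), so that no collapsing or over-counting occurs; this is immediate since both sets are indexed by the identical family of polynomials $f$ and carry the identical pair $([f]_R,[\lambda_f(y,z)])$ for each of them. (I read the $[\lambda(y,z)]$ in the statement as the intended $[\lambda_f(y,z)]$.)
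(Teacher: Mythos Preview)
Your proposal is correct and matches the paper's own reasoning: the paper states this corollary immediately after Corollary~\ref{Ldiscrip} with only the phrase ``by means of Theorem~\ref{remlamd}'' as justification, which is exactly the substitution you spell out. Your additional remark that one could equally start from the first line of Corollary~\ref{Ldiscrip} and drop the local-permutation condition via Proposition~\ref{forremovelam}(2) is a harmless variant of the same idea.
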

		In the following proposition, we show that the group  $ \overline{\mathcal{P}_R(R_j)}$ is a subgroup of $ \overline{\PrPol[R_j]}$, which is independent of the index $j$, and it is always embedded in $ \overline{\PrPol[R_k]}$ for every $k>j$.
		\begin{proposition}\label{CHSkthrelation}
			Let $R$ be a finite ring and let $k,j\ge 1$.	The group $ \overline{\mathcal{P}_R(R_k)}$ is a subgroup of $ \overline{\PrPol[R_k]}$, and
			$ \overline{\mathcal{P}_R(R_j)}\cong  \overline{\mathcal{P}_R(R_k)}$ for every $j\ne k$. In particular, $ \overline{\mathcal{P}_R(R_j)}$ is embedded in $ \overline{\PrPol[R_k]}$ for every $k>j$. 
		\end{proposition}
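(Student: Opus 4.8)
The plan is to treat the two assertions in turn, the group isomorphism being the substantive one. For the first assertion, I observe that $\mathcal{P}_R(R_k)$ is a non-empty subset of $\PrPol[R_k]$, and hence of the finite group $\big(\overline{\PrPol[R_k]},\circ\big)$ furnished by Corollary~\ref{clospolyf}. Applying Fact~\ref{closis}(2) with $A=\overline{\PrPol[R_k]}$ and $B=\mathcal{P}_R(R_k)$ then yields immediately that $\overline{\mathcal{P}_R(R_k)}$ is a subgroup of $\overline{\PrPol[R_k]}$.

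For the isomorphism I would define $\varphi\colon \overline{\mathcal{P}_R(R_j)}\to\overline{\mathcal{P}_R(R_k)}$ by
\[\varphi\big([f_n]_{R_j}\circ\cdots\circ[f_1]_{R_j}\big)=[f_n]_{R_k}\circ\cdots\circ[f_1]_{R_k},\]
where $f_1,\dots,f_n\in R[x]$. The first point to secure is that this prescription actually lands in $\overline{\mathcal{P}_R(R_k)}$: a pure polynomial $f\in R[x]$ is a permutation polynomial on one $R_l$ if and only if it is one on every $R_l$ with $l\ge 1$, by Corollary~\ref{CHSPPfirstcoordinate} applied with $f=f_0$ and all remaining components zero. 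Hence each factor $f_i$ representing an element of $\overline{\mathcal{P}_R(R_j)}$ is automatically a permutation polynomial on $R_k$, so the right-hand side is a legitimate element of $\overline{\mathcal{P}_R(R_k)}$.

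The heart of the argument is simultaneous well-definedness and injectivity, and here Corollary~\ref{indepfromnumbe} does the decisive work. For any $l\ge 1$ and any pure polynomials, the restriction of $[f_n]_{R_l}\circ\cdots\circ[f_1]_{R_l}$ to $R_1$ is computed identically regardless of $l$, since by Lemma~\ref{polyeval} each $[f_i]_{R_l}$ sends $R_1$ into $R_1$ and acts there by $a+b\alfa_1\mapsto f_i(a)+\lambda_{f_i}(a,b)\alfa_1$, a formula not involving $l$. By Corollary~\ref{indepfromnumbe} (and trivially when $l=1$), two such compositions agree on $R_l$ precisely when their restrictions to $R_1$ agree. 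Consequently the equality $[f_n]_{R_j}\circ\cdots\circ[f_1]_{R_j}=[g_m]_{R_j}\circ\cdots\circ[g_1]_{R_j}$ holds if and only if the corresponding equality holds on $R_k$, both being equivalent to equality of the common $R_1$-restrictions; the forward implication gives well-definedness of $\varphi$ and the backward implication gives injectivity. That $\varphi$ is a homomorphism is immediate by concatenating representing sequences, and surjectivity follows because every generator of $\overline{\mathcal{P}_R(R_k)}$ arises from pure permutation polynomials, which by the previous paragraph are permutation polynomials on $R_j$ as well.

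Finally, the \emph{in particular} clause follows by composing the isomorphism $\overline{\mathcal{P}_R(R_j)}\cong\overline{\mathcal{P}_R(R_k)}$ with the subgroup inclusion $\overline{\mathcal{P}_R(R_k)}\hookrightarrow\overline{\PrPol[R_k]}$ established in the first assertion. The one delicate step is the combined well-definedness and injectivity of $\varphi$: this is exactly the place where the index-independence encoded in Corollary~\ref{indepfromnumbe} is indispensable, the point being that equality of compositions of pure polynomial functions is already detected on $R_1$. Everything else in the proof is formal.
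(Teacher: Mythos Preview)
Your proof is correct and follows essentially the same approach as the paper: both define the map $\varphi$ by sending $[f_n]_{R_j}\circ\cdots\circ[f_1]_{R_j}$ to $[f_n]_{R_k}\circ\cdots\circ[f_1]_{R_k}$, invoke Corollary~\ref{CHSPPfirstcoordinate} to see that the codomain is correct and that $\varphi$ is surjective, and use Corollary~\ref{indepfromnumbe} as the key tool for well-definedness and injectivity. The only organizational difference is that the paper treats well-definedness in two steps (first changing the polynomial representative of each individual factor via Corollary~\ref{CHSGencount}, then changing the entire decomposition via Corollary~\ref{indepfromnumbe}), whereas you handle both simultaneously through the observation that equality of such compositions on any $R_l$ is detected by restriction to $R_1$; this is a harmless streamlining of the same argument.
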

		\begin{proof}
			Since $\mathcal{P}_R(R_k)\subseteq \PrPol[R_k]$, we have $\overline{\mathcal{P}_R(R_k)}\subseteq \overline{\PrPol[R_k]}$ by Definition~\ref{closdef}. 
			 			
			Now, let $j\ne k$. Without loss of generality, assume that $k>j$ and  let $F\in \overline{\mathcal{P}_R(R_j)}$.  Then
			$F=F_1\circ F_2\circ \cdots \circ F_n$ for some $F_1,\ldots, F_n\in \mathcal{P}_R(R_j)$ by Definition~\ref{closdef}. Then, by the definition of   $ \mathcal{P}_R(R_j)$, $F_i=[f_i]_{R_j}$ for some $f_i \in R[x]$; $i=1,\ldots,n$.
			Define \[
			\psi\colon\overline{\mathcal{P}_R(R_j)} \longrightarrow \overline{\mathcal{P}_R(R_k)},\quad
			F\mapsto [f_1]_{R_k}\circ \cdots \circ  [f_n]_{R_k}.
			\]
			First, we show that $\psi$ is well defined. So,  assume that 
			there exist $l_1,\ldots,l_n\in R[x]$ such that $[l_i ]_{R_j}=F_i$ for $i=1,\ldots,n$. Then, by Corollary~\ref{CHSGencount}, $[l_i ]_{R_k}   =[f_i] _{R_k}$
			for $i=1,\ldots,n$. Thus, $$\psi(F)=[f_1]_{R_k}\circ\ldots \circ [f_n]_{R_k}=[l_1]_{R_k}\circ \ldots \circ  [l_n]_{R_k}.$$
			Also, if there exist $H_1,\ldots, H_m\in \mathcal{P}_R(R_j)$ such that $F=H_1\circ \ldots \circ H_m$. Then there exist $h_1,\ldots,h_m\in R[x]$ with $H_i=[h_i]_{R_j}$ for $i=1,\dots, m$. Thus, by Corollary~\ref{indepfromnumbe}, $[h_1]_{R_k}\circ\ldots [h_m]_{R_k}=[f_1]_{R_k}\circ\ldots\circ[f_n]_{R_k}=\psi(F) $.
			Therefore, $\psi$ is well defined.
			 Further,   it is  a one-to-one homomorphism by Corollary~\ref{indepfromnumbe}.
			  Now, let $G\in \overline{\mathcal{P}_R(R_k)}$. Then   $G=G_1\circ \cdots  \circ G_m$ for some $ G_1, \ldots, G_m\in \mathcal{P}_R(R_k)$, where  $G_i=[g_i]_{R_k}$  for some  
			$g_i\in R[x]$ for $i=1,\ldots,m$. But then $[g_i]_{R_j}\in  \PrPol[R_j]$ by Corollary~\ref{CHSPPfirstcoordinate}, whence $[g_i]_{R_j}\in  \mathcal{P}_R(R_j)$ (since $g_i\in R[x]$)  for $i=1,\ldots,m$. Thus,  $[g_1]_{R_j}\circ \cdots \circ [g_m]_{R_j}\in  \overline{\mathcal{P}_R(R_j)}$.  So, it is not hard to see that 
			\[\psi([g_1]_{R_j}\circ \cdots \circ [g_m]_{R_j})=[g_1]_{R_k}\circ \cdots \circ [g_m]_{R_k}.\]
			This shows that $\psi$ is surjective, whence it is an isomorphism. Therefore, 	$ \overline{\mathcal{P}_R(R_j)}\cong  \overline{\mathcal{P}_R(R_k)}$. Now, the last statement is obvious since $\overline{\mathcal{P}_R(R_k)}$ is a subgroup of $ \overline{\PrPol[R_k]}$.
		\end{proof}
		\begin{definition}
			We call the group $\overline{\mathcal{P}_R(R_k)}$ the group of pure polynomial permutations.
		\end{definition}
	
When $R$ is a finite commutative ring not only the set $\PrPol[R_k] $ is a group  but also its subset 
$\mathcal{P}_R(R_k)$, i.e., $\overline{\mathcal{P}_R(R_k)}= \mathcal{P}_R(R_k)$ (see~\cite{polysev}). In this case, we will show that   the group  $\mathcal{ P}_{x\,,k}$ defined in Proposition~\ref{nicegroups} is a normal subgroup
of $\PrPol[R_k] $  that admits a complement, namely  the group  $\mathcal{P}_R(R_k)$. Further, we will pose a related question for the non-commutative case involving  the group $\mathcal{ P}_{x\,,k}$ and the closure groups  $ \overline{\PrPol[R_k]}$,  $ \overline{\mathcal{P}_R(R_k)} $.
 		   
		Before doing so, we recall the definition of split extensions (see for example \cite[Page~760]{per1}). 
		The extension  $$1 \rightarrow H \xrightarrow{i} G \xrightarrow{p} N \rightarrow 1$$ is 
		called split if there is a homomorphism $l\colon N \rightarrow G$ with $p\circ l=id_N$. In such circumstances,
		the group $G$ is called the semi-direct product of $H$ by $N$, and we write  $G=H\rtimes N$.
			\begin{theorem}\label{complem}
			Let $R$ be a finite  commutative ring.	Let  $\PrPol[R_k] $ be the group of polynomial permutations on $R_k$ and let $\mathcal{P}_R(R_k)$ and
			$\mathcal{ P}_{x\,,k}$ as in Notation~\ref{puregroup} and Proposition~\ref{nicegroups} respectively. Then
			\begin{enumerate}
				\item $\PrPol[R_k]= \mathcal{ P}_{x\,,k} \rtimes \mathcal{P}_R(R_k)$;
				\item $|\PrPol[R_k]|= |\mathcal{P}_R(R_k)||\PolFun[R]|^k$.
			\end{enumerate}
		\end{theorem}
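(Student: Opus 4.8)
The plan is to realize $\PrPol[R_k]$ as an internal semidirect product by producing a retraction homomorphism $\Phi$ of $\PrPol[R_k]$ onto $\mathcal{P}_R(R_k)$ whose kernel is precisely $\mathcal{P}_{x\,,k}$. Granting this, the split exact sequence $1\to\mathcal{P}_{x\,,k}\to\PrPol[R_k]\xrightarrow{\Phi}\mathcal{P}_R(R_k)\to 1$, split by the inclusion $\mathcal{P}_R(R_k)\hookrightarrow\PrPol[R_k]$ (which is a section of $\Phi$), gives assertion (1); and assertion (2) follows from $|\PrPol[R_k]|=|\ker\Phi|\cdot|\operatorname{im}\Phi|$ together with $|\mathcal{P}_{x\,,k}|=|\PolFun[R]|^k$ established in Proposition~\ref{nicegroups}.

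I would define $\Phi\colon\PrPol[R_k]\to\mathcal{P}_R(R_k)$ by $\Phi([f]_{R_k})=[f_0]_{R_k}$, where $f_0\in R[x]$ is the pure part of $f=f_0+\sum_{i=1}^{k}f_i\alfa_i$; note that $f_0$ is again a permutation polynomial on $R_k$ by Theorem~\ref{Cherper}, so the target is correct. Well-definedness is immediate from Corollary~\ref{CHS6}: if $[f]_{R_k}=[g]_{R_k}$ then $[\lambda_{f_0}(y,z)]=[\lambda_{g_0}(y,z)]$ and $[f_0]_R=[g_0]_R$, and feeding these two equalities back into Corollary~\ref{CHS6} (applied to the pure polynomials $f_0,g_0$) yields $[f_0]_{R_k}=[g_0]_{R_k}$. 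Since a polynomial in $R[x]$ equals its own pure part, $\Phi$ fixes $\mathcal{P}_R(R_k)$ pointwise; in particular $\operatorname{im}\Phi=\mathcal{P}_R(R_k)$ and $\Phi$ is a retraction.

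The step requiring care is that $\Phi$ is a homomorphism, and the clean route uses commutativity twice. Because $R$ is commutative, $R_k$ is a commutative ring, so substitution is compatible with composition, i.e.\ $[f]_{R_k}\circ[g]_{R_k}=[f\circ g]_{R_k}$ for all $f,g\in R_k[x]$. Moreover the projection $R_k\to R$, $r_0+\sum_{i=1}^{k}r_i\alfa_i\mapsto r_0$, is a ring homomorphism (its kernel $\sum_{i=1}^{k}\alfa_iR$ is an ideal squaring to $0$), and extending it coefficientwise gives a composition-preserving map $R_k[x]\to R[x]$, $f\mapsto f_0$; concretely, $\alfa_i\alfa_j=0$ forces the pure part of $g^{\,j}$ to be $g_0^{\,j}$, so that $(f\circ g)_0=f_0\circ g_0$. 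Combining the two facts gives $\Phi(F_1\circ F_2)=[(f\circ g)_0]_{R_k}=[f_0\circ g_0]_{R_k}=\Phi(F_1)\circ\Phi(F_2)$.

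Finally I would identify the kernel. By Corollary~\ref{CHS6}, $\Phi([f]_{R_k})$ is the identity of $R_k$ exactly when $[f_0]_R=\mathrm{id}_R$ and $[\lambda_{f_0}(y,z)]=[z]$ (compare $f_0$ with $x$); for such $f$ one more application of Corollary~\ref{CHS6} gives $[f]_{R_k}=[x+\sum_{i=1}^{k}f_i\alfa_i]_{R_k}\in\mathcal{P}_{x\,,k}$, and conversely every element of $\mathcal{P}_{x\,,k}$ clearly lies in $\ker\Phi$. Hence $\ker\Phi=\mathcal{P}_{x\,,k}$, and the semidirect decomposition and the order formula follow as indicated. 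The only genuine obstacle is the homomorphism property of $\Phi$; once it is reduced to the observation that the pure-part projection $R_k[x]\to R[x]$ respects composition, the remainder is routine bookkeeping with Corollary~\ref{CHS6} and Proposition~\ref{nicegroups}.
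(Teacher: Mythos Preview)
Your argument is correct and matches the paper's proof essentially step for step: the same retraction $\Phi([f]_{R_k})=[f_0]_{R_k}$, well-definedness via Corollary~\ref{CHS6}, the homomorphism property via compatibility of composition in the commutative setting together with $(f\circ g)_0=f_0\circ g_0$, and the kernel computation yielding $\mathcal{P}_{x,k}$. If anything, your justification that the pure-part projection $R_k[x]\to R[x]$ respects composition is more explicit than the paper's, which simply asserts $\varphi([f\circ g]_{R_k})=[f_0\circ g_0]_{R_k}$.
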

		\begin{proof}
			(1) Define a map $\varphi\colon \PrPol[R_k] \longrightarrow   \mathcal{P}_R(R_k)$ by
			$\varphi(F)=[f_0]_{R_k}$, where $F=[f]_{R_k}$ with  $f=f_0+\sum\limits_{i=1}^{k}f_i\alfa_i$ and $f_0,\ldots, f_k\in R[x]$. Then,  by the commutative form of  Corollary~\ref{CHSGencount} (see Remark~\ref{nulremark} and see~\cite[Corollary~3.6]{polysev}), $\varphi$ is well defined. Now, let $F_1\in  \PrPol[R_k]$ be induced by $g=g_0+\sum\limits_{i=1}^{k}g_i\alfa_i$, where $g_0,\ldots, g_k\in R[x]$. Then, since composition of polynomial functions and composition of polynomials are compatible over commutative rings,  we have  
			\[\varphi(F\circ F_1)=\varphi([f\circ g]_{R_k})= [f_0\circ g_0]_{R_k}\displaystyle_{=}[f_0]_{R_k}\circ  [g_0]_{R_k}=\varphi(F)\circ \varphi(F_1).\] 
			Thus, $\varphi$ is a homomorphism.  Then, it is evident that $\mathcal{ P}_{x\,,k} \subseteq\ker \varphi$.
			On the other hand if $F\in \ker \varphi$ is induced by $f=f_0+\sum\limits_{i=1}^{k}f_i\alfa_i$,
			then $\varphi(F)=[f_0]_{R_k}=id_{R_k}=[x]_{R_k}$.  Thus, by the commutative form of Corollary~\ref{CHSGencount}, $f_0\equiv x \mod \Nulld$, but this implies that  (again by the commutative form of Corollary~\ref{CHSGencount}) $F=[x+\sum\limits_{i=1}^{k}f_i\alfa_i]_{R_k}$. Thus, $F\in \mathcal{ P}_{x\,,k}$, and therefore  $ \mathcal{ P}_{x\,,k} =\ker \varphi$. 
			Now, by definition, $\varphi(F)=F$ for every $F\in \mathcal{P}_R(R_k)$, whence $\varphi$  is an epimorphism.
			Also, if $i_1\colon \mathcal{ P}_{x\,,k}\longrightarrow    \PrPol[R_k]$ and  $i_2\colon \mathcal{P}_R(R_k) \longrightarrow    \PrPol[R_k]$ are inclusion maps, then  $\varphi(i_2(F)) =F$ for every $F\in  \mathcal{P}_R(R_k)$ and the following extension is split 
			$$1 \rightarrow \mathcal{ P}_{x\,,k} \xrightarrow{i_1} \PrPol[R_k] \xrightarrow{\varphi} \mathcal{P}_R(R_k) \rightarrow 1.$$  
			Therefore, $\PrPol[R_k]= \mathcal{ P}_{x\,,k} \rtimes \mathcal{P}_R(R_k)$.
			(2) Follows from (1) and Proposition~\ref{nicegroups}.
			
		\end{proof}
		For the non-commutative case we have the following question.
		\begin{question}
			Let $R$ be a finite non-commutative ring. Then is it true that 
			\[ \overline{\PrPol[R_k]}= \mathcal{ P}_{x\,,k} \rtimes  \overline{\mathcal{P}_R(R_k)}?\]
		\end{question}
		\subsection{The set of stabilizer  polynomial permutations of $R$  }
		In this subsection, we consider a  subset of the set $ \PrPol[R_k]$ having the property of fixing (stabilizing) the elements of the ring $R$ pointwise.  We see that this  set has elements induced by pure polynomials (i.e., polynomials over $R$). 
		Also,   we 
		use the cardinality of this set to find  a counting formula for the number of polynomial permutations on $R_k$. Further, we consider  the closure group of this set.
		
		Similar to the commutative case (see~\cite[Definition~5.1]{polysev}), we have the following definition.
		\begin{definition}\label{CHSstd}
			Let
			$\Stabk=\{F\in \PrPol[R_k]\mid 
			F(a)=a \text{ for every } a\in R\}$. 
		\end{definition}
		Let $R$ be a finite ring.
		Then,	it is obvious that $\Stabk$ is a subset of $\PrPol[R_k]$, and therefore $\overline{\Stabk}$ is a subgroup of $\overline{\PrPol[R_k]}$.
		Next, we show that the elements of $\Stabk$ can be obtained by pure polynomials (see Definition~\ref{purpol}).   	
		\begin{proposition}\label{CHSfirststab}
			Let $R$ be a finite non-commutative ring. Then
			\[\Stabk=\{F\in \PrPol[R_k]\mid F 		\textnormal{ is induced by } x+h(x), h \in \Null[R] \}.\]		In particular, every element of $\Stabk$ is induced by a polynomial in $R[x]$.
		\end{proposition}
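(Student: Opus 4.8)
The plan is to prove the asserted set equality by two inclusions, the main tool being the evaluation of a polynomial at a ``pure'' ring element $a\in R$ (that is, with no $\alfa_i$-component), for which the formula of Lemma~\ref{polyeval} degenerates to an ordinary substitution in $R$.

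For the inclusion $\supseteq$, I would take $F\in\PrPol[R_k]$ induced by $g=x+h$ with $h\in\Null[R]$; note that the membership $F\in\PrPol[R_k]$ is built into the right-hand side, so I need not (and in general cannot) check that an arbitrary $x+h$ is a permutation. For any $a\in R$ I substitute $a$ into $g$ in $R_k$: since $g\in R[x]$ and $a\in R$, this is just $g(a)=a+h(a)$, and $h(a)=0$ because $h$ is null on $R$. Hence $F(a)=a$ for every $a\in R$, so $F\in\Stabk$. This direction is immediate.

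For the converse $\subseteq$, let $F\in\Stabk$ be induced by $f=f_0+\sum_{i=1}^{k}f_i\alfa_i$ with $f_0,\dots,f_k\in R[x]$. Evaluating at $a\in R$ via Lemma~\ref{polyeval} (all $b_i=0$, using $\lambda_{f_0}(a,0)=0$ from Fact~\ref{lamdapro}) gives $f(a)=f_0(a)+\sum_{i=1}^{k}f_i(a)\alfa_i$; the hypothesis $f(a)=a$ and the uniqueness of the representation in the basis $\{1,\alfa_1,\dots,\alfa_k\}$ then force $f_0(a)=a$ and $f_i(a)=0$ for all $a\in R$ and all $i\ge 1$. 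Therefore $h=f_0-x\in\Null[R]$ and $f_i\in\Null[R]$ for $i=1,\dots,k$. The final step is to discard the $\alfa_i$-parts: since each $f_i$ is null on $R$, Corollary~\ref{CHS6} (comparing $f$ with its pure part $f_0$, equivalently Lemma~\ref{CHS31}(2), since then each $f_i\alfa_i\in\Null[R_k]$) yields $f\quv f_0$ on $R_k$, so $F=[f_0]_{R_k}=[x+h]_{R_k}$, which is exactly of the claimed form. The ``in particular'' clause is then immediate, as $x+h\in R[x]$.

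I do not anticipate a genuine obstacle; the only point demanding care is the bookkeeping that the single condition ``$F$ fixes $R$ pointwise'' simultaneously pins down the pure part as $f_0\equiv x$ on $R$ \emph{and} the vanishing of every $\alfa_i$-component on $R$, after which the equivalence criterion legitimately allows me to drop those components and land on a representative in $R[x]$.
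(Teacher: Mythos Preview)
Your proof is correct and follows essentially the same approach as the paper's: both show $\supseteq$ by direct substitution of $a\in R$ into $x+h$, and for $\subseteq$ both evaluate a general representative $f_0+\sum_i f_i\alfa_i$ at $a\in R$, use the basis uniqueness to force $f_0(a)=a$ and $f_i(a)=0$, and then invoke Corollary~\ref{CHS6} to drop the $\alfa_i$-parts. Your version is slightly more explicit in citing Lemma~\ref{polyeval} and Fact~\ref{lamdapro} for the evaluation step and in observing that the permutation hypothesis on the right-hand side need not be verified, but the argument is the same.
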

		\begin{proof}
			It is evident that 
			\[\Stabk[R]\supseteq\{F\in \PrPol[R_k]\mid F 
			\textnormal{ is induced by } x+h(x), h \in \Null[R] \}.\]
			For the converse, let $G\in \PrPol[R_k]$ 
			such that $G(a)=a$ for every $a\in R$. 
			Then $G$ is represented by $g_0+\sum\limits_{i=1}^{k}g_i\alfa_i$,  
			where $g_0,\ldots, g_k\in R[x]$, and  
			$a=G(a)=g_0(a)+\sum\limits_{i=1}^{k}g_i(a)\alfa_i$ for each $a\in R$. 
			It follows that $g_i(a)=0$ for each $a\in R$, i.e., 
			$g_i$ is a null polynomial on $R$ for $i=1,\dots,k$. 
			Thus  $g_0+\sum\limits_{i=1}^{k}g_i\alfa_i\quv g_0$ on $R_k$
			by Corollary~\ref{CHSGencount}, that is, $G$ is induced by $g_0$. 
			Further, $[g_0]_R= id_{R}$, where $id_{R}$ is the identity function on $R$, i.e., $g_0\equiv x \mod \Null$ and  therefore  $g_0(x)=x+h(x)$ for some $h\in \Null$. 
				\end{proof}
		
		\begin{definition}
			We call the group $\overline{\Stabk}$ the (pure) stabilizer closure group. 
		\end{definition}
		\begin{corollary}
			Let $R$ be a finite non-commutative ring. Then $\overline{\Stabk}$ is a subgroup of $ \overline{\mathcal{P}_R(R_k)}$.
		\end{corollary}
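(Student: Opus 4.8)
The plan is to reduce everything to the set inclusion $\Stabk \subseteq \mathcal{P}_R(R_k)$ and then pass to closures, since both $\overline{\Stabk}$ and $\overline{\mathcal{P}_R(R_k)}$ are already known to be groups under composition of functions.

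First I would establish $\Stabk \subseteq \mathcal{P}_R(R_k)$. By definition every $F \in \Stabk$ lies in $\PrPol[R_k]$, and by Proposition~\ref{CHSfirststab} such an $F$ is induced by a polynomial of the form $x + h(x)$ with $h \in \Null[R]$, which in particular is a polynomial in $R[x]$. Comparing this with the definition of $\mathcal{P}_R(R_k)$ in Notation~\ref{puregroup} (polynomial permutations on $R_k$ represented by some $f \in R[x]$), we see at once that $F \in \mathcal{P}_R(R_k)$. Hence $\Stabk \subseteq \mathcal{P}_R(R_k)$.

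Next I would invoke monotonicity of the closure operation of Definition~\ref{closdef}: any element of $\overline{\Stabk}$ is a finite composition $F_1 \circ \cdots \circ F_n$ with each $F_i \in \Stabk \subseteq \mathcal{P}_R(R_k)$, hence is a finite composition of elements of $\mathcal{P}_R(R_k)$ and so lies in $\overline{\mathcal{P}_R(R_k)}$. This gives the containment $\overline{\Stabk} \subseteq \overline{\mathcal{P}_R(R_k)}$.

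Finally I would conclude using the group structure already in place. As remarked just before the statement, $\Stabk$ is a subset of $\PrPol[R_k]$, so by Fact~\ref{closis}(2) (applied inside the finite group $\overline{\PrPol[R_k]}$) its closure $\overline{\Stabk}$ is a group under composition; and by Proposition~\ref{CHSkthrelation} the closure $\overline{\mathcal{P}_R(R_k)}$ is likewise a group. Since both carry the same operation, namely composition of functions on $R_k$, and $\overline{\Stabk} \subseteq \overline{\mathcal{P}_R(R_k)}$, the former is a subgroup of the latter, as required. There is no real obstacle here: the entire content is the inclusion $\Stabk \subseteq \mathcal{P}_R(R_k)$ supplied by Proposition~\ref{CHSfirststab}, and the remainder is the formal fact that a closed-under-composition subset of a group is a subgroup, already packaged in Fact~\ref{closis}.
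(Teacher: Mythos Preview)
Your proposal is correct and matches the paper's intended argument: the corollary is stated without proof immediately after Proposition~\ref{CHSfirststab}, precisely because that proposition gives $\Stabk \subseteq \mathcal{P}_R(R_k)$ and the passage to closures is routine via Fact~\ref{closis}. There is nothing to add.
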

		Next, we show that for all $k\ge 1$ the stabilizer closure groups $\overline{\Stabk}$ are isomorphic.
		\begin{theorem}\label{CHSstabiso}
			Let	$k,j\ge 1$. Then
			$\overline{\Stabk}  \cong  \overline{ St_{ j}(R)}$. 
		\end{theorem}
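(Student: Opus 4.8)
The plan is to show that the isomorphism $\psi$ constructed in the proof of Proposition~\ref{CHSkthrelation} restricts to the desired isomorphism between the stabilizer closure groups. First I would record that, by Proposition~\ref{CHSfirststab}, every element of $St_m(R)$ is induced by a pure polynomial $x+h$ with $h\in\Null$; hence $St_m(R)\subseteq\mathcal{P}_R(R_m)$, and so $\overline{St_j(R)}$ and $\overline{\Stabk}$ sit as subgroups inside $\overline{\mathcal{P}_R(R_j)}$ and $\overline{\mathcal{P}_R(R_k)}$, respectively. The case $k=j$ is trivial and the statement is symmetric in $k$ and $j$, so I may assume $k>j$ and work with the isomorphism $\psi\colon\overline{\mathcal{P}_R(R_j)}\longrightarrow\overline{\mathcal{P}_R(R_k)}$ that sends $[f_1]_{R_j}\circ\cdots\circ[f_n]_{R_j}$ to $[f_1]_{R_k}\circ\cdots\circ[f_n]_{R_k}$ for $f_i\in R[x]$.

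The key point I would isolate is that a pure polynomial $x+h$ with $h\in\Null$ induces an element of the stabilizer on $R_m$ simultaneously for all $m\ge 1$, or for none. Indeed, $(x+h)(a)=a+h(a)=a$ for every $a\in R$, so $[x+h]_{R_m}$ fixes $R$ pointwise independently of $m$; and by Theorem~\ref{Cherper}, whether $x+h$ is a permutation polynomial on $R_m$ depends only on its pure part $x+h$ being a permutation polynomial on $R$ and on $[\lambda_{x+h}(a,z)]$ being a local permutation in $z$, two conditions that make no reference to the number of dual variables $m$ (this is also the content of Corollary~\ref{CHSPPfirstcoordinate}). Consequently $[x+h]_{R_j}\in St_j(R)$ if and only if $[x+h]_{R_k}\in St_k(R)$.

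With this observation in hand the restriction is immediate. Given $F\in\overline{St_j(R)}$, write $F=F_1\circ\cdots\circ F_n$ with $F_i\in St_j(R)$; by Proposition~\ref{CHSfirststab} each $F_i=[x+h_i]_{R_j}$ with $h_i\in\Null$, so $\psi(F)=[x+h_1]_{R_k}\circ\cdots\circ[x+h_n]_{R_k}$ is a composite of elements of $St_k(R)$ by the previous paragraph, whence $\psi(F)\in\overline{\Stabk}$. The same argument applied to $\psi^{-1}$ shows $\psi^{-1}\big(\overline{\Stabk}\big)\subseteq\overline{St_j(R)}$, so $\psi$ carries $\overline{St_j(R)}$ bijectively onto $\overline{\Stabk}$; being the restriction of a group isomorphism, this bijection is itself a group isomorphism, giving $\overline{\Stabk}\cong\overline{St_j(R)}$. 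I expect the main obstacle to be precisely the $m$-independence assertion of the second paragraph: once it is established that neither the permutation property nor the pointwise fixing of $R$ is affected by enlarging or shrinking the number of dual variables, transferring the generators through $\psi$ is routine.
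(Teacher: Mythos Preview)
Your proposal is correct and follows essentially the same route as the paper: restrict the isomorphism $\psi\colon\overline{\mathcal{P}_R(R_j)}\to\overline{\mathcal{P}_R(R_k)}$ of Proposition~\ref{CHSkthrelation} to the stabilizer closure groups, using Proposition~\ref{CHSfirststab} to write generators as $[x+h]$ with $h\in\Null$ and then transferring them between $R_j$ and $R_k$. Your explicit appeal to Theorem~\ref{Cherper}/Corollary~\ref{CHSPPfirstcoordinate} for the $m$-independence of the permutation property is a point the paper leaves implicit when it invokes only Proposition~\ref{CHSfirststab}, so if anything your argument is slightly more careful there.
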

		\begin{proof}
			Without loss of generality  assume that $k>j$
			 	and	let $F\in \overline{\mathcal{P}_R(R_j)}$.  Then
			$F=F_1\circ \cdots \circ F_n$ for some $F_1,\ldots, F_n\in \mathcal{P}_R(R_j)$, where $F_i=[f_i]_{R_j}$ and  $f_i \in R[x]$ for $i=1,\ldots,n$.
			Consider the isomorphism  \[
			\psi\colon\overline{\mathcal{P}_R(R_j)} \longrightarrow \overline{\mathcal{P}_R(R_k)},\quad
			F\mapsto [f_1]_{R_k}\circ \cdots \circ  [f_n]_{R_k} 
			\] defined in the proof of Proposition~\ref{CHSkthrelation}.
			Then $\overline{St_{ j}(R)}\cong \psi(\overline{St_{ j}(R)})$. Also, if $F\in \overline{St_{ j}(R)}$, then  $F=[x+h_1(x)]_{R_j}\circ \cdots \circ [x+h_n(x)]_{R_j}$ for some $h_1,\ldots,h_n \in\Null$  by Proposition~\ref{CHSfirststab}. Therefore, $\psi(F)=[x+h_1(x)]_{R_k}\circ \cdots \circ [x+h_n(x)]_{R_k}\in \overline{St_{ k}(R)}$
			since by Proposition~\ref{CHSfirststab}, $[x+h_i(x)]_{R_k} \in \Stabk$ for $i=1,\ldots,n$. This proves
			that $\psi(\overline{St_{j}(R)})\subseteq \overline{\Stabk[R]}$. Now, let $G\in \overline{\Stabk[R]}$. 
			Then, $G=[x+g_1(x)]_{R_k}\circ \cdots \circ [x+g_m(x)]_{R_k}$ for some $g_1,\ldots,g_m \in\Null$  by Proposition~\ref{CHSfirststab}. Again   by Proposition~\ref{CHSfirststab}, $[x+g_i(x)]_{R_j}\in St_{j}(R)$ for $i=1,\ldots,m$, and therefore 
			\[\psi([x+g_1(x)]_{R_j}\circ \cdots \circ [x+g_m(x)]_{R_j}) =[x+g_1(x)]_{R_k}\circ \cdots \circ [x+g_m(x)]_{R_k}=G\in \psi(\overline{St_{j}(R)})\] since $[x+g_1(x)]_{R_j}\circ \cdots \circ [x+g_m(x)]_{R_j}\in \overline{St_{ j}(R)}$. This proves the other inclusion and ends the proof.	
		\end{proof}
		By   definition, $\overline{ St_{ k}(R)}$ stabilizes $R$ pointwise, that is,  $F(a)=a$ for every $a\in R$ and for every $F\in \overline{ St_{ k}(R)}$. However, we do not know if 
		every element of $\overline{\PrPol[R_k]}$ that stabilizes the elements of $R$ pointwise is an element of  the stabilizer closure group. This motivates us to give the following definition.
		
		\begin{definition}\label{Rstb}
			Let
			$ Stb_k(R  )=\{F\in \overline{\mathcal{P}_R(R_k)}\mid 
			F(a)=a \text{ for every } a\in R\}$. 
		\end{definition}
		It is evident that $ Stb_k(R)$ is a subgroup of $\overline{\mathcal{P}_R(R_k)}$  whose elements  stabilize the elements of $R$ pointwise. Because of this, we call $ Stb_k(R)$ the stabilizer group of $R$ in  $\overline{\mathcal{P}_R(R_k)}$. Also, it will not be hard to see that the following inclusions of sets are true 
		\[\Stabk\subseteq \overline{\Stabk}\subseteq Stb_k(R). 
		\] An interesting question arising from the last relation is whether the following equalities hold:
		\[\Stabk= \overline{\Stabk}= Stb_k(R)?
		\]
		To answer this question affirmatively it would be enough to show that the set of polynomial permutations $\PrPol[R_k]$ is  closed with respect to composition. In particular, in the commutative case these equalities hold. 
		
		In the following, we obtain a general form of  \cite[Theorem~5.7]{polysev} for finite non-commutative rings. But we cannot ensure that the whole assertion of \cite[Theorem~5.7]{polysev} is valid in the non-commutative case. Because,
		in general, as we mentioned in Remark~\ref{isitredundant}-(\ref{qos}), we do not know if every permutation polynomial $f\in R[x]$ on the finite non-commutative ring $R$  is also 
		a permutation polynomial on the ring $R_k$ with the exception of a large class of chain rings (see Theorem~\ref{remlamd}).  
				\begin{proposition}\label{CHSPZlemma}
			Let $R$ be a finite ring. Then the stabilizer group $Stb_k(R) $ is a normal subgroup of the group 
			$\overline{\mathcal{P}_R(R_k)}$. Furthermore, if every element of $\PrPol$ is the restriction to $R$ of an element of $\mathcal{P}_R(R_k)$, then 
			\[
			\raise2pt\hbox{$\overline{\mathcal{P}_R(R_k)}$} \big/ \lower2pt\hbox{$Stb_k(R)$}
			\cong \overline{\mathcal{P}(R)}.
			\]
		\end{proposition}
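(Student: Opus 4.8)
The plan is to package both assertions into a single \emph{restriction homomorphism} and then apply the first isomorphism theorem for groups. First I would define
\[
\rho\colon \overline{\mathcal{P}_R(R_k)} \longrightarrow \overline{\mathcal{P}(R)}, \qquad \rho(F)=F\vert_R,
\]
the restriction of $F$ to the subring $R\subseteq R_k$, and check that this is meaningful. Every $F\in\overline{\mathcal{P}_R(R_k)}$ can be written as $F=[f_1]_{R_k}\circ\cdots\circ[f_n]_{R_k}$ with $f_1,\ldots,f_n\in R[x]$; since a pure polynomial $f_i\in R[x]$ maps $R$ into $R$, all intermediate values of the composition stay in $R$, so $F(R)\subseteq R$, and as $F$ is injective and $R$ is finite, $F\vert_R$ is a bijection of $R$. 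Moreover $F\vert_R=[f_1]_R\circ\cdots\circ[f_n]_R$, and each factor $[f_i]_R$ is a polynomial permutation of $R$: indeed $[f_i]_{R_k}\in\PrPol[R_k]$, so by Lemma~\ref{perpurcof} (equivalently Theorem~\ref{Cherper}) its pure part $f_i$ is a permutation polynomial on $R$. Hence $F\vert_R\in\overline{\mathcal{P}(R)}$, so $\rho$ really lands in $\overline{\mathcal{P}(R)}$.

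Next I would verify that $\rho$ is a group homomorphism. Because every intermediate value of a composition of pure-polynomial functions remains in $R$, restriction is multiplicative: for $F,G\in\overline{\mathcal{P}_R(R_k)}$ and $a\in R$ one has $G(a)\in R$, whence $(F\circ G)(a)=F(G(a))=F\vert_R(G\vert_R(a))$, i.e. $\rho(F\circ G)=\rho(F)\circ\rho(G)$. By Definition~\ref{Rstb} the kernel is precisely $\ker\rho=\{F\in\overline{\mathcal{P}_R(R_k)}\mid F(a)=a \text{ for all } a\in R\}=Stb_k(R)$, so $Stb_k(R)$ is a normal subgroup of $\overline{\mathcal{P}_R(R_k)}$, which settles the first assertion.

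For the second assertion, the first isomorphism theorem yields $\overline{\mathcal{P}_R(R_k)}/Stb_k(R)\cong\im\rho$, so it remains to show $\im\rho=\overline{\mathcal{P}(R)}$. The inclusion $\im\rho\subseteq\overline{\mathcal{P}(R)}$ has already been established. For the reverse inclusion I would use the hypothesis: an arbitrary element of $\overline{\mathcal{P}(R)}$ is a composition $G_1\circ\cdots\circ G_m$ with each $G_i\in\PrPol[R]$, and by assumption each $G_i$ is the restriction to $R$ of some $\widetilde{G}_i\in\mathcal{P}_R(R_k)$, so $\rho(\widetilde{G}_i)=G_i$. Then $\widetilde{G}_1\circ\cdots\circ\widetilde{G}_m\in\overline{\mathcal{P}_R(R_k)}$ and, since $\rho$ is a homomorphism, $\rho(\widetilde{G}_1\circ\cdots\circ\widetilde{G}_m)=G_1\circ\cdots\circ G_m$. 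Thus $\im\rho=\overline{\mathcal{P}(R)}$, and the claimed isomorphism follows.

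The argument is mostly bookkeeping once $\rho$ is in place; the one step that genuinely draws on the structure theory developed earlier is showing that $\rho$ takes values in $\overline{\mathcal{P}(R)}$ rather than merely in the full symmetric group $S_R$ — here it is essential that each restricted factor $[f_i]_R$ be a \emph{permutation} of $R$, which is exactly the content of Lemma~\ref{perpurcof}. I expect the main point requiring care to be tracking that all intermediate values of the compositions stay inside $R$, since this is what makes restriction both well defined into $\overline{\mathcal{P}(R)}$ and compatible with composition; the hypothesis of the second part supplies precisely the surjectivity needed to identify the image.
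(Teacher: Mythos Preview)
Your proof is correct and follows essentially the same approach as the paper: define the restriction homomorphism $\overline{\mathcal{P}_R(R_k)}\to\overline{\mathcal{P}(R)}$, identify its kernel as $Stb_k(R)$, and apply the first isomorphism theorem together with the hypothesis for surjectivity. Your choice to define $\rho$ intrinsically as $F\mapsto F\vert_R$ (rather than via a chosen factorization $F=[f_1]_{R_k}\circ\cdots\circ[f_n]_{R_k}$, as the paper does) is a mild improvement, since it makes well-definedness automatic instead of requiring an appeal to Corollary~\ref{CHSGencount}.
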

		
		\begin{proof}
	
			Let  $F\in \overline{\mathcal{P}_R(R_k)}$. By definition, $F=  [f_1]_{R_k}\circ\cdots\circ[f_n]_{R_k}$  for some $f_1,\ldots,f_n\in R[x]$.  Now, define  
			a map	$\Psi\colon \overline{\mathcal{P}_R(R_k)} \longrightarrow \overline{\mathcal{P}(R)}$
			by $\Psi(F)= [f_1]_{R}\circ\cdots\circ[f_n]_{R}$. Then $\Psi$ is well defined by Theorem~\ref{Cherper} and Corollary~\ref{CHSGencount}. Evidently, it is 
			a group homomorphism with $\ker\Psi =  Stb_k(R)$. Therefore, by the First Isomorphism Theorem,\[
			\raise2pt\hbox{$\overline{\mathcal{P}_R(R_k)}$} \big/ \lower2pt\hbox{$Stb_k(R)$}
			\cong \Psi(\overline{\mathcal{P}_R(R_k)}).
			\]
			Furthermore, if the elements of   $\PrPol$ are obtained by  restricting elements of $\mathcal{P}_R(R_k)$ to $R$, then it is not  hard to see that $\Psi(\mathcal{P}_R(R_k))=\PrPol$. Hence,
			if $G\in \overline{\mathcal{P}(R)}$ such that $G=G_1\circ\cdots\circ G_m$ for some $G_1,\ldots,G_m\in\PrPol$, then there exist $F_1,\ldots, F_m\in \mathcal{P}_R(R_k)$ such that $\Psi(F_i)=G_i$ for $i=1,\ldots,m$. Therefore,
			\[
			G  =G_1\circ\cdots\circ G_m=\Psi(F_1)\circ\cdots\circ \Psi(F_m)=\Psi(F_1 \circ\cdots\circ  F_m)\in 
			\Psi(\overline{\mathcal{P}_R(R_k)}). \]  This shows that $	\Psi(\overline{\mathcal{P}_R(R_k)})=\overline{\mathcal{P}(R)}$.
		\end{proof}
		For chain rings of $Char(R) \ne p$, we can say more.
		\begin{theorem}\label{chainrestric}
			Let $R$ be a finite chain ring of $Char(R)=p^c$ with $c>1$. Then: 
			\begin{enumerate}
				\item each element of $\PrPol$ appears as a restriction on $R$ of some $G\in \mathcal{P}_R(R_k)$;
				\item   $Stb_k(R)$ is a normal subgroup of  
				$\overline{\mathcal{P}_R(R_k)}$ and 	\[
				\raise2pt\hbox{$\overline{\mathcal{P}_R(R_k)}$} \big/ \lower2pt\hbox{$Stb_k(R)$}
				\cong \overline{\mathcal{P}(R)}.
				\]
			\end{enumerate}
		\end{theorem}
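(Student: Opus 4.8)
The plan is to obtain both parts as direct consequences of Theorem~\ref{remlamd} and the already-proved Proposition~\ref{CHSPZlemma}: the latter reduces the isomorphism in part~(2) to the restriction property asserted in part~(1), and part~(1) in turn is exactly where the chain-ring hypothesis $Char(R)=p^c$ with $c>1$ enters.

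First I would prove~(1). Given $F\in\PrPol$, write $F=[f]_R$ for some permutation polynomial $f\in R[x]$. Regarding $f$ as an element of $R_k[x]$ whose pure part is $f$ itself and all of whose $\alfa_i$-components vanish, I would invoke the equivalence (3)$\Rightarrow$(1) of Theorem~\ref{remlamd}: since $R$ is a finite chain ring with $c>1$ and $f$ is a permutation polynomial on $R$, it is automatically a permutation polynomial on $R_k$, so $[f]_{R_k}\in\PrPol[R_k]$. Because $f\in R[x]$, this function lies in $\mathcal{P}_R(R_k)$ by Notation~\ref{puregroup}. Finally, for every $a\in R$ we have $[f]_{R_k}(a)=f(a)=F(a)$, and $f(a)\in R$ because $f$ has coefficients in $R$; thus $R$ is invariant under $G=[f]_{R_k}$ and the restriction $G\vert_R$ is precisely $F$. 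This gives~(1).

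For~(2), the normality of $Stb_k(R)$ in $\overline{\mathcal{P}_R(R_k)}$ holds with no extra hypothesis by Proposition~\ref{CHSPZlemma}, so it is already available. That same proposition yields the isomorphism $\overline{\mathcal{P}_R(R_k)}\big/Stb_k(R)\cong\overline{\mathcal{P}(R)}$ whenever every element of $\PrPol$ is the restriction to $R$ of an element of $\mathcal{P}_R(R_k)$, and this is exactly what~(1) supplies under the standing chain-ring assumption. Hence the isomorphism follows at once.

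I do not expect a genuine obstacle in this proof: the substantive content has been absorbed into Theorem~\ref{remlamd} (the removal of the condition on the assigned polynomial $\lambda_{f_0}$, valid precisely for chain rings of characteristic $p^c$ with $c>1$) and into Proposition~\ref{CHSPZlemma}. The only point that merits explicit care is the elementary bookkeeping that the restriction of the induced permutation $[f]_{R_k}$ to the subring $R$ coincides with $[f]_R$, which is immediate once one notes that $R$ is stable under $[f]_{R_k}$ because $f\in R[x]$.
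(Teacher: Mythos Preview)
Your proof is correct and follows exactly the same approach as the paper: part~(1) is deduced from Theorem~\ref{remlamd} (the implication (3)$\Rightarrow$(1) applied to $f\in R[x]$), and part~(2) is then obtained from part~(1) together with Proposition~\ref{CHSPZlemma}. The paper's proof is simply a terser version of what you wrote.
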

		\begin{proof} 
			(1) Follows from Theorem~\ref{remlamd}.
			(2) This is a consequence of (1) and Proposition~\ref{CHSPZlemma}.
		\end{proof}
		\begin{remark}\label{imagepure}
			\leavevmode
			\begin{enumerate}
				\item Let $\Psi$ be the homomorphism in the proof of Proposition~\ref{CHSPZlemma}.  
				Then, since $\Psi^{-1}(id_R)=\ker \Psi= Stb_k(R)$,
				\[ |\ker \Psi\cap \mathcal{P}_R(R_k) |=|Stb_k(R)\cap \mathcal{P}_R(R_k)|=|\Stabk|.\]
				
				Therefore,
				\[L=|\mathcal{P}_R(R_k)|=|\Psi(\mathcal{P}_R(R_k))|\cdot|\Stabk|,\] where $L$ is as in  Proposition~\ref{CHSGeperncount} (see also the paragraph before Corollary~\ref{Ldiscrip}).
				
				\item When $R$ is commutative,  the homomorphism $\Psi$ in the proof of Proposition~\ref{CHSPZlemma}  is a surjection since in this case the restriction condition in the last statement of this proposition, is valid (see also \cite[Theorem~5.7]{polysev}), and in this case, therefore  $\Psi(\mathcal{P}_R(R_k))= \PrPol$.
				\item We noticed earlier that conditions on the formal derivatives in the commutative case were replaced by
				stronger conditions on the assigned polynomials, so, the reader could expect that the results in the commutative case involving  $\PrPol$ are still true in the non-commutative case by involving $\Psi(\mathcal{P}_R(R_k))$ instead. 
			\end{enumerate}
		\end{remark}
		From now on, let  $\Psi(\mathcal{P}_R(R_k))$ stand for the subset of $\mathcal{P}(R )$ obtained by restricting the elements of  $\mathcal{P}_R(R_k)$ to $R$.
		By   Remark~\ref{imagepure},  Proposition~\ref{CHSGeperncount} and Theorem~\ref{chainrestric}, we have the following.
		\begin{corollary}\label{CHS14}
			The number of polynomial permutations on $R_k$ is given by
			\[|\PrPol[R_k]|=|\PolFun[{R}]|^k\cdot
			|\Psi(\mathcal{P}_R(R_k))|\cdot |\Stabk|.\]
			In particular, when $R$ is a finite chain ring with $Char(R)=p^c$ ($c>1$),
			\[|\PrPol[R_k]|=|\PolFun[{R}]|^k\cdot
			|\PrPol |\cdot |\Stabk |.\] 	 
		\end{corollary}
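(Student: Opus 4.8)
The plan is to assemble the formula from results already in hand, since the corollary is in essence a bookkeeping combination of three structural facts and requires no genuinely new argument. First I would invoke Proposition~\ref{CHSGeperncount}, which records the identity $|\PrPol[R_k]|=L\cdot|\PolFun[R]|^k$, where $L$ is the number of admissible pairs $([f]_R,[\lambda_f(y,z)])$ (with $[f]_R$ bijective and $[\lambda_f(y,z)]$ a local permutation in $z$). Thus the whole task reduces to replacing the factor $L$ by the product $|\Psi(\mathcal{P}_R(R_k))|\cdot|\Stabk|$.

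For this I would recall the identification $L=|\mathcal{P}_R(R_k)|$ obtained in the discussion preceding Corollary~\ref{Ldiscrip}: by Corollary~\ref{CHSGencount} an element of $\mathcal{P}_R(R_k)$ is uniquely pinned down by the pair $([f]_R,[\lambda_f(y,z)])$, while by Theorem~\ref{Cherper} its membership in $\PrPol[R_k]$ is precisely the admissibility of that pair, so the two counts coincide. Then I would cite the factorization in Remark~\ref{imagepure}(1), namely $|\mathcal{P}_R(R_k)|=|\Psi(\mathcal{P}_R(R_k))|\cdot|\Stabk|$: the homomorphism $\Psi$ of Proposition~\ref{CHSPZlemma} restricts a pure permutation of $R_k$ to $R$, its kernel $Stb_k(R)$ meets $\mathcal{P}_R(R_k)$ in exactly $\Stabk$ (using $\Stabk=Stb_k(R)\cap\mathcal{P}_R(R_k)$, which follows from Proposition~\ref{CHSfirststab}), and each fibre of $\Psi$ on $\mathcal{P}_R(R_k)$ therefore has cardinality $|\Stabk|$. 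Substituting the chain $L=|\mathcal{P}_R(R_k)|=|\Psi(\mathcal{P}_R(R_k))|\cdot|\Stabk|$ into the identity of Proposition~\ref{CHSGeperncount} yields the first displayed formula at once.

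For the ``in particular'' clause I would specialize to a finite chain ring $R$ with $Char(R)=p^c$, $c>1$. Here Theorem~\ref{chainrestric}(1) guarantees that every element of $\PrPol$ arises as the restriction to $R$ of some $G\in\mathcal{P}_R(R_k)$; in the notation introduced before the corollary this says exactly $\Psi(\mathcal{P}_R(R_k))=\PrPol$, so $|\Psi(\mathcal{P}_R(R_k))|=|\PrPol|$. Replacing this factor in the first formula delivers the second, completing the proof.

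The one place where actual content hides, rather than mere substitution, is the uniformity of the fibres underlying $|\mathcal{P}_R(R_k)|=|\Psi(\mathcal{P}_R(R_k))|\cdot|\Stabk|$. Since $\mathcal{P}_R(R_k)$ is a priori only a \emph{subset} of the group $\overline{\mathcal{P}_R(R_k)}$, and composition of functions need not be compatible with composition of polynomials over a non-commutative ring, one cannot simply read off a constant fibre size from a coset decomposition of the ambient group; this is precisely the subtlety settled in Remark~\ref{imagepure}(1), on which the whole corollary leans. For the chain-ring refinement, the corresponding delicate point, that the first-coordinate projection lands onto all of $\PrPol$, is exactly the lifting of permutation polynomials from $R$ to $R_k$ supplied by Theorem~\ref{remlamd} and packaged in Theorem~\ref{chainrestric}(1). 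Once those two inputs are granted, the remainder is a direct substitution.
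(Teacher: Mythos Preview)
Your proof is correct and follows essentially the same route as the paper, which simply cites Remark~\ref{imagepure}, Proposition~\ref{CHSGeperncount}, and Theorem~\ref{chainrestric} without further elaboration. Your write-up is in fact more thorough, since you spell out the chain $L=|\mathcal{P}_R(R_k)|=|\Psi(\mathcal{P}_R(R_k))|\cdot|\Stabk|$ and correctly flag the fibre-uniformity issue that the paper leaves implicit in Remark~\ref{imagepure}(1).
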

		
		Similar to the commutative case, for every integer $n\ge 1$ we assign a subset to the ideal $\Null$ and a subset of the ideal $A\Null$. It is not difficult to see that these subsets are groups concerning the addition of polynomials. 
		\begin{definition} \label{CHS11.12}
			For $n\ge1$, we define
			\[\N[n]{R}=\{g\in R[x]\mid  g\in \Null[R]
			\textnormal{ with }\deg g < n\},\]  and
			\[		A\N[n]{R}=\{g\in R[x]\mid 
			g\in A\Null[R] \textnormal{ with }\deg g <n\}.\]
		\end{definition}

		In the following proposition, we give another description of the order of the stabilizer set $\Stabk[R]$. Also, we show this number is bounded by the index of $A\Null$ in $\Null$.
		\begin{proposition}\label{CHS12} 
			Let $R$ be a finite non-commutative ring. Then the following hold.
			\begin{enumerate}
				\item\label{CHSscndstab}
				$
				|\Stabk[R]|  =|\{[\lambda_g(y,z)]\mid  g\in \Null[R] \text{ and } [g+x]_{R_k}\in 
				\mathcal{P}_R(R_k)\}|.
				$
				\item\label{CHSthirdstab}
				If there exists a monic null polynomial on $R_k$ in $R[x]$ of degree~$n$, 
				then:
				\begin{enumerate}

					\item \label{CHSthirdstaba}
					
					$
					|\Stabk[R]|  =
					|\{[\lambda_g(y,z)]\mid  g\in \Null[R] \text{ and } [g+x]_{R_k}\in 
					\mathcal{P}_R(R_k) \textnormal{ with }\deg g<n\}|;
					$
					\item\label{CHSfourthstab} $
					|\Stabk[R]|  \le [\Null[R] \colon A\Null[R]]=
					\frac{|\N[n]{R}|}{|A\N[n]{R}|}.
					$
				\end{enumerate}
			\end{enumerate}
			
		\end{proposition}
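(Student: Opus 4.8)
The plan is to establish part (1) as an exact bijection and then trim the target set by degree to obtain parts (2)(a) and (2)(b). Throughout I would lean on the characterization of $\Stabk[R]$ from Proposition~\ref{CHSfirststab}, the equivalence criterion of Corollary~\ref{CHSGencount}, and the elementary identities for $\lambda$ in Fact~\ref{lamdapro}.

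For part (1), Proposition~\ref{CHSfirststab} tells us that every $F\in\Stabk[R]$ has the form $F=[x+g]_{R_k}$ with $g\in\Null[R]$, and conversely any such $F$ fixes $R$ pointwise (as $(x+g)(a)=a$ for $a\in R$) and lies in $\mathcal{P}_R(R_k)$ because $x+g\in R[x]$. I would then define $\Phi\colon\Stabk[R]\longrightarrow\{[\lambda_g(y,z)]\mid g\in\Null[R],\ [g+x]_{R_k}\in\mathcal{P}_R(R_k)\}$ by $\Phi([x+g]_{R_k})=[\lambda_g(y,z)]$. The crux is the identity $\lambda_{x+g}(y,z)=z+\lambda_g(y,z)$, which follows from Fact~\ref{lamdapro}(1) and (4). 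Since the pure part of $x+g$ is $x+g$ itself and $[x+g]_R=\mathrm{id}_R$ automatically (as $g\in\Null[R]$), Corollary~\ref{CHSGencount} collapses to the single equivalence $[x+g_1]_{R_k}=[x+g_2]_{R_k}\iff[\lambda_{g_1}]=[\lambda_{g_2}]$. This one equivalence makes $\Phi$ simultaneously well defined and injective, while surjectivity is immediate from how the target set is written; this gives the equality in (1).

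For part (2)(a) I would use the monic null polynomial $h_1\in R[x]$ of degree $n$ on $R_k$ to cut degrees. Given $g\in\Null[R]$ with $[g+x]_{R_k}\in\mathcal{P}_R(R_k)$, apply the ``moreover'' part of Proposition~\ref{CHSsur} to $F=[g+x]_{R_k}$ (represented by $f=x+g\in R[x]$) to obtain $p\in R[x]$ with $\deg p<n$, $p\quv x+g$ on $R$, and $[\lambda_p]=[\lambda_{x+g}]$. Setting $r=p-x$ yields $r\in\Null[R]$, $\deg r<n$, $[\lambda_r]=[\lambda_g]$, and $[x+r]_{R_k}=[x+g]_{R_k}\in\mathcal{P}_R(R_k)$ by Corollary~\ref{CHSGencount}. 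Hence every class in the part (1) target is already realized with $\deg g<n$, proving (2)(a).

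For part (2)(b) I would view $g\mapsto[\lambda_g(y,z)]$ as an additive homomorphism on $\N[n]{R}$ (additivity from Fact~\ref{lamdapro}(1)), whose kernel is exactly $A\N[n]{R}$, so its image has cardinality $[\N[n]{R}\colon A\N[n]{R}]$. By (2)(a), $|\Stabk[R]|$ is the size of a subset of this image, giving $|\Stabk[R]|\le[\N[n]{R}\colon A\N[n]{R}]$. The main obstacle is then identifying $[\N[n]{R}\colon A\N[n]{R}]$ with $[\Null[R]\colon A\Null[R]]$. I would show the inclusion $\N[n]{R}\hookrightarrow\Null[R]$ induces an isomorphism of quotients: the relation $A\N[n]{R}=\N[n]{R}\cap A\Null[R]$ is immediate, and every coset of $A\Null[R]$ meets $\N[n]{R}$ because dividing any $g\in\Null[R]$ by the monic $h_1$ gives $g=qh_1+r$ with $\deg r<n$, where $h_1\in A\Null[R]$ by Lemma~\ref{CHS31}(1) and hence $qh_1\in A\Null[R]$ since $A\Null[R]$ is a left ideal (Remark~\ref{nulremark}), so $g\equiv r\pmod{A\Null[R]}$ with $r\in\Null[R]$. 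The isomorphism theorem then yields $[\N[n]{R}\colon A\N[n]{R}]=[\Null[R]\colon A\Null[R]]=|\N[n]{R}|/|A\N[n]{R}|$, closing the argument.
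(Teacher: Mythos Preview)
Your proof is correct and follows essentially the same approach as the paper. For parts (1) and (2)(a) your argument coincides with the paper's: the explicit bijection via Proposition~\ref{CHSfirststab} and Corollary~\ref{CHSGencount} using $\lambda_{x+g}=z+\lambda_g$, then degree reduction via the ``moreover'' clause of Proposition~\ref{CHSsur}. For (2)(b) there is a minor variation worth noting: the paper obtains the inequality $|\Stabk[R]|\le[\Null[R]:A\Null[R]]$ by invoking Corollary~\ref{indecon} with $F=\mathrm{id}_R$, whereas you get it directly from (2)(a) by observing that the target set sits inside the image of $g\mapsto[\lambda_g]$ on $\N[n]{R}$; and for the index equality $[\Null[R]:A\Null[R]]=|\N[n]{R}|/|A\N[n]{R}|$ the paper compares the images of the two homomorphisms $\phi$ and $\phi_1$, while you argue via division by $h_1$ that every $A\Null[R]$-coset in $\Null[R]$ meets $\N[n]{R}$. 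Both routes are short and equivalent in spirit; your division argument is arguably more self-contained since it avoids the detour through Corollary~\ref{indecon}.
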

		
		\begin{proof}
			(\ref{CHSscndstab})
			By Proposition~\ref{CHSfirststab},  
			\begin{align*}
				|\Stabk[R]| & =|\{[\lambda_{x+g}(y,z)]\mid  g\in \Null[R] \text{ and } [g+x]_{R_k}\in 
				\Stabk\}|\\
				& =|\{[\lambda_{g}(y,z)]\mid  g\in \Null[R] \text{ and } [g+x]_{R_k}\in 
				\mathcal{P}_R(R_k)\}|.
			\end{align*}
			Since for every $g\in \Null$, $[x+g]_{R_k}\in \Stabk$ if and only if $[x+g]_{R_k}\in \mathcal{P}_R(R_k)$.\\
			(2) By Remark~\ref{CHSexistmonicn}, there is always a  monic null polynomial  on  $R_k$  with coefficients from $R$.
			
			(\ref{CHSthirdstaba}) If $g\in \Null[R]$, then by 
			Proposition~\ref{CHSsur}, there exists 
			$f\in R[x]$ with $\deg f<n$ such that 
			$[\lambda_{f}(y,z)]=[\lambda_{g}(y,z)]$ and	$[f]_R=[g]_R$. Clearly, $f\in \Null[R]$.
			
			(\ref{CHSfourthstab})
			First, we show the inequality.
			We have, since every element of $\Stabk$ induces the identity on $R$,
			\[	\{[\lambda_{x+g}(y,z)]\mid  g\in \Null[R] \text{ and } [g+x]_{R_k}\in 
			\Stabk\} \subseteq \{[\lambda_{f}(y,z)]\mid  f\in R[x]\text{ and } [f]_{R}=id_R 
			\}.\]
			Thus, by (\ref{CHSscndstab}) and by Corollary~\ref{indecon} for the case $F=id_R$,
			\begin{equation}\label{stequal}	  
				|\Stabk|\le |  \{[\lambda_{f}(y,z)]\mid  f\in R[x]\text{ and } [f]_{R}=id_R 
				\}|=[\Null \colon A\Null].\end{equation}
			To compute the ratio,  set $\mathcal{B}=\{[\lambda_{f}(y,z)]\mid  f\in R[x]  
			\}$. Then, by Fact~\ref{lamdapro},  $\mathcal{B}$ is an additive group with operation defined as the following \[[\lambda_{f}(y,z)]+[\lambda_{g}(y,z)]=[\lambda_{f+g}(y,z)].\]
			We leave it to the reader to check the details.
			
			Now	define $\phi\colon \Null[R] \longrightarrow \mathcal{B}$
			by $\phi(f)=[\lambda_{f}(y,z)]$.
			
			Then, it is obvious that $\phi$ is a homomorphism of additive groups. 
			By the definition of $A\Null$, $  \ker\phi=A\Null$, and thus  $\raise1.5pt\hbox{$\Null[R]$} \big/ \lower1.5pt\hbox{$A\Null$} \cong \im( \phi)$.
			
			So, if $\phi_1$ stands to the restriction of $\phi$ to the subgroup $\N[n]{R}$, we have  similarly that
			$  \ker\phi_1=A\N[n]{R}$, and    $\raise1.5pt\hbox{$\N[n]{R}$} \big/ \lower1.5pt\hbox{$A\N[n]{R}$} \cong \im (\phi_1)$.
			Therefore, $\frac{|\N[n]{R}|}{|A\N[n]{R}|}=|\im( \phi_1)|$.  So to end the proof, we need only show that $\im (\phi)\subseteq\im (\phi_1)$ since the other inclusion is valid.
			Assume that $F\in \im (\phi)$. Then there exists $g\in \Null$ such that $\phi(g)= [\lambda_{g}(y,z)]=F$.
			By part (a), there exists $f\in N_R$ with $\deg f<n$ (i.e., $f\in\N[n]{R}$) such that $[\lambda_{g}(y,z)] =[\lambda_{f}(y,z)]$ but this means that $F=[\lambda_{f}(y,z)]=\phi_1(f)\in \im (\phi_1)$.
		\end{proof}
		Again on the considered class of finite chain rings, one can say more. For instance, the inequality of Proposition~\ref{CHS12} becomes indeed equality. The following theorem illustrates this.
		\begin{theorem}
			Let $R$ be a finite chain ring with $Char(R)=p^c$ ($c>1$).	Then the following hold.
			\begin{enumerate}
				\item\label{CHaSscndstab}
				$
				|\Stabk[R]|  =|\{[\lambda_g(y,z)]\mid  g\in \Null[R] \}|.
				$
				\item\label{CHaSthirdstab}
				If there exists a monic null polynomial on $R_k$ in $R[x]$ of degree~$n$, 
				then:
				\begin{enumerate}

					\item \label{CHaSthirdstaba}
					
					$
					|\Stabk[R]|  =
					|\{[\lambda_g(y,z)]\mid  g\in \Null[R] \text{ and }  \deg g<n\}|;
					$
					\item\label{CHaSfourthstab} $
					|\Stabk[R]|  = [\Null[R] \colon A\Null[R]]=
					\frac{|\N[n]{R}|}{|A\N[n]{R}|}.
					$
				\end{enumerate}
			\end{enumerate}
		\end{theorem}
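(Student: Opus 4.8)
The plan is to read off all three parts from their general non-commutative counterparts in Proposition~\ref{CHS12}, observing that the only genuinely restrictive feature of that proposition — the side condition $[g+x]_{R_k}\in\mathcal{P}_R(R_k)$ — becomes vacuous on the class of chain rings under consideration. The single new ingredient is Theorem~\ref{remlamd}: over a finite chain ring $R$ with $Char(R)=p^c$ and $c>1$, a polynomial is a permutation polynomial on $R_k$ precisely when its pure part is a permutation polynomial on $R$. Everything else is the bookkeeping already done for Proposition~\ref{CHS12}, now collapsing to equalities.

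First I would record the key observation. Let $g\in\Null[R]$. The polynomial $x+g$ lies in $R[x]$ and equals its own pure part, and since $g$ induces the zero function on $R$ we have $[x+g]_R=id_R$; in particular $x+g$ is a permutation polynomial on $R$. By Theorem~\ref{remlamd} this forces $x+g$ to be a permutation polynomial on $R_k$, so $[x+g]_{R_k}\in\PrPol[R_k]$, and as $x+g\in R[x]$ we conclude $[x+g]_{R_k}\in\mathcal{P}_R(R_k)$. Combined with Proposition~\ref{CHSfirststab} this shows $\Stabk[R]=\{[x+g]_{R_k}\mid g\in\Null[R]\}$, i.e.\ \emph{every} null $g$ contributes. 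Feeding this back into Proposition~\ref{CHS12}(1) lets me delete the condition $[g+x]_{R_k}\in\mathcal{P}_R(R_k)$ and obtain part~(\ref{CHaSscndstab}); the identical deletion in Proposition~\ref{CHS12}(2a) yields part~(\ref{CHaSthirdstaba}). The monic null polynomial on $R_k$ with coefficients in $R$ required by the hypothesis of (2) always exists by Remark~\ref{CHSexistmonicn}, so that hypothesis is automatically met.

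For the sharpening (\ref{CHaSfourthstab}) I would upgrade the inequality of Proposition~\ref{CHS12}(2b) to an equality. There the bound arose from the inclusion
\[\{[\lambda_{x+g}(y,z)]\mid g\in\Null[R],\ [x+g]_{R_k}\in\Stabk\}\subseteq\{[\lambda_f(y,z)]\mid f\in R[x],\ [f]_R=id_R\},\]
whose right-hand side has cardinality $[\Null[R]\colon A\Null[R]]$ by Corollary~\ref{indecon} applied with $F=id_R$. By the key observation the left-hand set is now indexed by all of $\Null[R]$; conversely every $f$ with $[f]_R=id_R$ can be written $f=x+g$ with $g=f-x\in\Null[R]$, and $\lambda_{x+g}=z+\lambda_g$ by Fact~\ref{lamdapro}, so $[\lambda_f]=[\lambda_{x+g}]$ lands in the left-hand set. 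Hence the two sets coincide and equality holds. Equivalently, the additive homomorphism $g\mapsto[\lambda_g(y,z)]$ on $\Null[R]$ has kernel $A\Null[R]$ by the very definition of $A\Null$, so its image has size $[\Null[R]\colon A\Null[R]]$, which part~(\ref{CHaSscndstab}) identifies with $|\Stabk[R]|$; the final equality with $\tfrac{|\N[n]{R}|}{|A\N[n]{R}|}$ is then precisely the index computation already performed inside the proof of Proposition~\ref{CHS12}. I do not expect a real obstacle: the entire novelty is that Theorem~\ref{remlamd} makes the permutation side condition automatic, and the only step demanding care is verifying that this turns the inclusion above into an equality.
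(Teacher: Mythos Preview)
Your proposal is correct and follows essentially the same route as the paper: use Theorem~\ref{remlamd} to make the side condition $[x+g]_{R_k}\in\mathcal{P}_R(R_k)$ in Proposition~\ref{CHS12} automatic for every $g\in\Null[R]$, which collapses parts (1) and (2a) immediately and turns the inclusion underlying (2b) into an equality. Your presentation of (2b) is slightly more explicit (writing $f=x+(f-x)$ and invoking $\lambda_{x+g}=z+\lambda_g$), while the paper phrases it as $[f]_{R_k}\in\Stabk\iff[f]_R=id_R$ and then compares with Equation~(\ref{stequal}); these are the same argument.
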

		\begin{proof}
			(\ref{CHaSscndstab}) Follows by Proposition~\ref{CHS12}-(\ref{CHSscndstab}) since $[f+x]_{R_k}\in \PrPol[R_k]$  for every $f\in \Null$ by Theorem~\ref{remlamd}.
			
			(\ref{CHaSthirdstaba}) Follows from (\ref{CHaSscndstab}) and Proposition~\ref{CHSsur}.
			
			(\ref{CHaSfourthstab})	By Proposition~\ref{CHS12}, we need only show that 	$|\Stabk[R]|  = [\Null[R] \colon A\Null[R]]$. Now, let $f\in R[x]$. Then, by Theorem~\ref{remlamd}, $[f]_{R_k} \in \Stabk[R]$ if and only if $[f]_R=id_R$. Therefore, by Corollary~\ref{CHSGencount}, 
			\[
			|\Stabk|= |  \{[\lambda_{f}(y,z)]\mid  f\in R[x]\text{ and } [f]_{R}=id_R. 
			\}|\] But then comparing this relation with Equation~(\ref{stequal})  in the proof of Proposition~\ref{CHS12} ends the proof. 
		\end{proof}
		The previous theorem, Corollary~\ref{CHS14} and Proposition~\ref{CHSfirstcountfor} imply the following result.
		
		\begin{corollary}
			Let $R$ be a finite chain ring with $Char(R)=p^c$ ($c>1$). Then
		
		\begin{enumerate}
			\item $|\PolFun[R_k]|=|\Stabk||\PolFun|^{k+1}$; 
			\item $\frac{|\PrPol[R_k]|}{|\PolFun[R_k]|}=\frac{|\PrPol|}{|\PolFun|}$.
		\end{enumerate}	
		\end{corollary}
		
	\noindent {\bf Acknowledgment.} 
	This research was funded in part  by the Austrian Science Fund (FWF) [10.55776/P35788]. For open access purposes, the authors have applied a CC BY public copyright license to any
	author-accepted manuscript version arising from this submission.
	 The second author is partially supported by the IMU-Simons African Fellowship Program. The second author would like to thank the Department of Mathematics at the University of Graz for kind hospitality during her stay in Graz. The authors also thank the referee for the careful reading of the manuscript and for the valuable comments that helped to improve the manuscript. 

	\bibliographystyle{plain}
	\bibliography{noncomdu}
\end{document}